\newcommand\transpose{%
  {\mathchoice
    {\raisebox{.45ex}{$\displaystyle{\intercal}$}}
    {\raisebox{.45ex}{$\textstyle{\intercal}$}}
    {\raisebox{.30ex}{$\scriptstyle{\intercal}$}}  
    {\raisebox{.23ex}{$\scriptscriptstyle{\intercal}$}}}
  }
\newcounter{dummy} \numberwithin{dummy}{section}
\newtheorem{theorem}[dummy]{Theorem}
\newtheorem{corollary}[dummy]{Corollary}
\newtheorem{lemma}[dummy]{Lemma}
\newtheorem{proposition}[dummy]{Proposition}
\theoremstyle{remark}
\newtheorem{remark}[dummy]{Remark}
\newtheorem{example}[dummy]{Example}
\newcommand{\calC}{\mathcal C}
\newcommand{\calF}{\mathcal F}
\newcommand{\calR}{\mathcal R}
\newcommand{\calL}{\mathcal L}
\DeclareMathOperator{\Ann}{Ann}
\DeclareMathOperator{\Sym}{Sym}
\DeclareMathOperator{\End}{End}
\DeclareMathOperator{\Dom}{Dom}
\DeclareMathOperator{\spn}{span}
\DeclareMathOperator{\pr}{pr}
\DeclareMathOperator{\id}{id}
\DeclareMathOperator{\tr}{tr}
\DeclareMathOperator{\dv}{div}
\DeclareMathOperator{\Ric}{\mathrm{Ric}}
\newcommand{\rnabla}{\mathring{\nabla}}
\newcommand{\shh}{\sharp^H}
\newcommand\blank{{\kern.8pt\displaystyle\cdot\kern.8pt}}
\newcommand\blankdown{{\cdot\kern.8pt}}
\DeclareMathOperator{\Ort}{O}
\DeclareMathOperator{\ad}{ad}
\DeclareMathOperator{\II}{\mathit{I\!I}}
\DeclareMathOperator{\Lie}{Lie}
\newcommand{\ptr}{/\!/}
\newcommand{\hptr}{/ \! \hat{/}}
\newcommand{\ve}{\varepsilon}
\DeclareMathOperator{\dilation}{Dil}
\DeclareMathOperator{\Cov}{Cov}
\numberwithin{equation}{section}
\title[Stochastic completeness and gradient representations]{Stochastic completeness and gradient representations for sub-Riemannian manifolds}
\author[E. Grong, A. Thalmaier]{Erlend Grong and Anton Thalmaier }
\address{Universit\'e Paris-Sud,  
3, rue Joliot-Curie, 91192 Gif-sur-Yvette, France, and \newline
University of Bergen, Department of Mathematics,  P.O. Box 7803, 5020 Bergen, Norway}
\email{erlend.grong@math.uib.no}
\address{Mathematics Research Unit, FSTC, University of Luxembourg,
Maison du Nombre, 6, avenue de la Fonte, L-4364 Esch-sur-Alzette, Luxembourg}
\email{anton.thalmaier@uni.lu}
\thanks{This work has been supported by the Fonds
National de la Recherche Luxembourg (FNR) under the OPEN scheme
(project GEOMREV O14/7628746). The first author supported by project 249980/F20 of the Norwegian Research Council.} 
\subjclass[2010]{60D05, 35P99, 53C17, 47B25}
\keywords{Diffusion process, stochastic completeness, hypoelliptic operators, gradient bound, sub-Riemannian geometry}
\begin{document}

\begin{abstract}
Given a second order partial differential operator $L$ satisfying the strong H\"ormander condition with corresponding heat semigroup $P_t$, we give two different stochastic representations of $dP_t f$ for a bounded smooth function~$f$. We show that the first identity can be used to prove infinite lifetime of a diffusion of $\frac{1}{2} L$, while the second one is used to find an explicit pointwise bound for the horizontal gradient on a Carnot group. In both cases, the underlying idea is to consider the interplay between sub-Riemannian geometry and connections compatible with this geometry. 
\end{abstract}

\maketitle

\section{Introduction}
A Brownian motion on a Riemannian manifold $(M, g)$ is a diffusion process with infinitesimal generator equal to one-half of the Laplace-Beltrami operator~$\Delta_{g}$ on $M$. If $(M, g)$ is a complete Riemannian manifold, a lower bound for the Ricci curvature is a sufficient condition for Brownian motion to have infinite lifetime~\cite{Yau78}. Stated in terms of the minimal heat kernel $p_t(x,y)$ to $\frac{1}{2} \Delta_{g}$, this means that $\int_M p_t( x, y) \, d\mu(y) = 1$ for any
$(t,x) \in (0, \infty) \times M$, where $\mu =\mu_g$ is the Riemann volume density.
Infinite lifetime of the Brownian motion is equivalent to uniqueness of solutions to the heat equation in $L^\infty$, see e.g.~\cite{Gri13}, \cite[Section~5]{Has60}. Furthermore, let $P_t$ denote the minimal heat semigroup of $\frac{1}{2} \Delta_{g}$ and let $\nabla f$ denote the gradient of a smooth function with respect to $g$. Then a lower Ricci bound also guarantees that $t \mapsto \| \nabla P_t f \|_{L^\infty(g)}$ is bounded on any finite interval whenever $\nabla f$ is bounded. This fact allows one to use the $\Gamma_2$-calculus of Bakry-\'Emery, see e.g.~\cite{BaEm85,BaLe96}.

For any second order partial differential operator $L$ on $M$, let $\sigma(L) \in \Gamma(\Sym^2 TM)$ denote its symbol, i.e.~the symmetric, bilinear tensor on the cotangent bundle $T^*M$ uniquely determined by the relation
\begin{equation} \label{symbol} \sigma(L)(df, d\phi) = \frac{1}{2} \left( L (f\phi) - f L\phi - \phi Lf \right), \quad f,\phi \in C^\infty(M).\end{equation}
If $L$ is elliptic, then $\sigma(L)$ coincides with the cometric $g^*$ of some Riemannian metric $g$ and $L$ can be written as $L = \Delta_{g} +Z$ for some vector field $Z$. Hence, we can use the geometry of $g$ along with the vector field $Z$ to study the properties of the heat flow of $L$, see e.g. \cite{Wan04}. If $\sigma(L)$ is only positive semi-definite we can still associate a geometric structure known as a sub-Riemannian structure. Recently, several results have appeared linking sub-Riemannian geometric invariants to properties of diffusions of corresponding second order operators  and their heat semigroup, see \cite{BaBo12,BBG14,BKW16,GrTh14a,GrTh14b}. These results are based on a generalization of the $\Gamma_2$-calculus for sub-Riemannian manifolds, first introduced in \cite{BaGa17}. As in the Riemannian case, the preliminary requirements for using this $\Gamma_2$-calculus is that the diffusion of $L$ has infinite lifetime and that the gradient of a function does not become unbounded under the application of the heat semigroup.

Consider the following example of an operator $L$ with positive semi-definite symbol. Let $(M, g)$ be a complete Riemannian manifold with a foliation $\mathcal{F}$ corresponding to an integrable distribution $V$. Let $H$ be the orthogonal complement of $V$ with corresponding orthogonal projection $\pr_{H}$ and define a second order operator~$L$ on~$M$ by
\begin{equation} \label{presrL} L f = \mathrm{div} \, (\pr_{H} \nabla f),   \quad f \in C^\infty(M).\end{equation}

If $H$ satisfies the bracket-generating condition, meaning that the sections of~$H$ along with their iterated brackets span the entire tangent bundle, then $L$ is a hypoelliptic operator by H\"ormander's classical theorem \cite{Hor67}. The operator $L$ corresponds to the sub-Riemannian metric $g_H = g|{H}$. Let us make the additional assumption that leaves of the foliation are totally geodesic submanifolds of $M$ and that the foliation is Riemannian. If only the first order brackets are needed to span the entire tangent bundle, it is known that any $\frac{1}{2} L$-diffusion $X_t$ has infinite lifetime given certain curvature bounds \cite[Theorem 3.4]{GrTh14b}. Furthermore, if $H$ satisfies the Yang-Mills condition, then no assumption on the number of brackets needed to span the tangent bundle is necessary \cite[Section~4]{BKW16}, see Remark~\ref{re:geometric} for the definition of the Yang-Mills condition. Under the same restrictions, for any smooth function $f$ with bounded gradient, $t \mapsto \|\nabla P_tf\|_{L^\infty(g)}$ remains bounded on a finite interval.

We will show how to modify the argument in \cite{BKW16} to go beyond the requirement of the Yang-Mills condition and even beyond foliations. We will start with some preliminaries on sub-Riemannian manifolds and sub-Laplacians in Section~\ref{sec:sR}. In Section~\ref{sec:Weitz} we will show that the existence of a Weitzenb\"ock-type formula for a connection sub-Laplacian is always corresponds to adjoint of a connection compatible with a sub-Riemannian structure. Our results on infinite lifetime are presented in Section~\ref{sec:InfiniteLifetime} based on a Feynman-Kac representation of~$dP_tf$ using a particular adjoint of a compatible connection. Using recent results of~\cite{Elw14}, we also show that our curvature requirement in the case of totally geodesic foliations implies that the Brownian motion of the full Riemannian metric $g$ has infinite lifetime as well, see Section~\ref{sec:FoliationCounter}.

Our Feynman-Kac representation in Section~\ref{sec:InfiniteLifetime} uses parallel transport with respect to a connection that does not preserve the horizontal bundle.  We give an alternative stochastic representation if $dP_tf$ using parallel transport along a connection that preserves our sub-Riemannian structure in Section~\ref{sec:TorsionRep}. This rewritten representation allows us to give an explicit pointwise bound for the horizontal gradient in Carnot groups. For a smooth function $f$ on $M$, the horizontal gradient $\nabla^H f$ is defined by the condition that $\alpha(\nabla^H f) = \sigma(L)(df, \alpha)$ for any $\alpha \in T^*M$. Carnot groups  are the `flat model spaces' in sub-Riemannian geometry in the sense that their role is similar to that of Euclidean spaces in Riemannian geometry. See Section~\ref{sec:Carnot} for the definition. It is known that there exists pointwise bounds for the horizontal gradient on Carnot groups. From \cite{Mel08}, there exists constants $C_p$ such that
\begin{equation} \label{pointwise} | \nabla^H P_t f|_{g_H} \leq C_p \left( P_t | \nabla^H f |^p_{g_H} \right)^{1/p}, \quad p \in (1, \infty), \end{equation}
holds pointwise for any $t >0$. The constant $C_p$ has to be strictly larger than~$1$, see \cite{DrMe05}. We give explicit constants the gradient estimates on Carnot groups. This result improves on the constant found in \cite{BBBC08} for the special case of the Heisenberg group. Also, for $p >2$ we find a constant that does not depend on the heat kernel.

Appendix~\ref{sec:FK} deals with Feynman-Kac representations of semigroups whose generators are not necessarily self-adjoint, which is needed for the result in Section~\ref{sec:InfiniteLifetime}.


\section{Sub-Riemannian manifolds and sub-Laplacians} \label{sec:sR}
\subsection{Sub-Riemannian manifolds} We define \emph{a sub-Riemannian manifold} as a triple $(M, H, g_H)$ where $M$ is a connected manifold, $H\subseteq TM$ is a subbundle of the tangent bundle and $g_H$ is a metric tensor defined only on $H$. Such a structure induces a map $\shh: T^* M \to H \subseteq TM$ by the formula
\begin{equation} \label{shh} \alpha(v) =g_H(\shh \alpha, v) = : \langle \shh \alpha, v \rangle_{g_H} \quad \alpha \in T_x M,\ v \in H_x,\ x \in M.\end{equation}
The kernel of this map is the subbundle $\Ann(H) \subseteq T^*M$ of covectors vanishing on~$H$. This map $\sharp^H$ induces a cometric $g_H^*$ on $T^*M$ by the formula
\begin{equation} \label{metrictocometric}\langle \alpha, \beta \rangle_{g_H^*} = \langle \sharp^H \alpha, \sharp^H \beta \rangle_{g_H},\end{equation}
which is degenerate unless $H = TM$. Conversely, a given cometric $g_H^*$ degenerating along a subbundle of $T^*M$, we can define $\shh \alpha = g_H^*(\alpha, \blank)$ and use \eqref{metrictocometric} to obtain~$g_H$. Going forward, we will refer to $g_H^*$ and $(H, g_H)$ interchangeably as a \emph{sub-Riemannian structure} on $M$. We will call $H$ \emph{the horizontal bundle}. For the rest of the paper, $n$ is the rank of $H$ while $n+\nu$ denotes the dimension of $M$.

Let $\mu$ be a chosen smooth volume density with corresponding divergence $\dv_\mu$. Relative to $\mu$, we can define a second order operator
\begin{equation} \label{srL} \Delta_{H} f := \Delta_{g_H}f = \mathrm{div}_\mu \, \shh df. \end{equation}
By means of definition \eqref{symbol}, the symbol of $\Delta_{H}$ satisfies $\sigma(\Delta_{H}) = g_H^*$. Locally the operator $\Delta_{H}$ can be written as
$$\Delta_{H} f = \sum_{i=1}^n A_i^2 f + A_0f, \quad n = \mathrm{rank} \, H,$$
where $A_0, A_1, \dots, A_n$ are vector fields taking values in $H$ such that $A_1,\dots,A_n$ form a local orthonormal basis of $H$.

The horizontal bundle $H$ is called \emph{bracket-generating} if the sections of $H$ along with its iterated brackets span the entire tangent bundle. The horizontal bundle is said to have \emph{step} $k$ at $x$ if $k-1$ is the minimal order of iterated brackets needed to span $T_xM$. From the local expression of $\Delta_{H}$, it follows that $H$ is bracket-generating if and only if $\Delta_{H}$ satisfies \emph{the strong H\"ormander condition} \cite{Hor67}. We shall assume that this condition indeed holds, giving us that both $\Delta_{H}$ and $\frac{1}{2} \Delta_H - \partial_t$ are hypoelliptic and that
\begin{equation} \label{distance} \mathsf{d}_{g_H}(x,y) := \sup \left\{|f(x) - f(y)| \, \colon \ f \in C_c^\infty(M),\ \sigma(\Delta_{H})(df, df) \leq 1 \right\},\end{equation}
is a well defined distance on $M$. Here, and in the rest of the paper, $C_c^\infty(M)$ denotes the smooth, compactly supported functions on $M$. Alternatively, the distance $\mathsf{d}_{g_H}(x,y)$ can be realized as the infimum of the lengths of all absolutely continuous curves tangent to $H$ and connecting $x$ and~$y$. The bracket-generating condition ensures that such curves always exist between any pair of points. For more information on sub-Riemannian manifolds, we refer to \cite{Mon02}.

In what follows, we will always assume that $H$ is bracket-generating, unless otherwise stated explicitly. We note that if $\Delta_{H}$ satisfies the strong H\"ormander condition and if $\mathsf{d}_{g_H}$ is a complete metric, then $\Delta_{H} |{C_c^\infty(M)}$ is essentially self-adjoint by \cite[Chapter~12]{Str86}.

For the remainder of the paper, we make the following notational conventions. If $p:E \to M$ is a vector bundle, we denote by $\Gamma(E)$ the space of smooth sections of~$E$. If $E$ is equipped with a connection~$\nabla$ or a (possibly degenerate) metric tensor~$g$, we denote the induced connections on $E^*$, $\bigwedge^2 E$, etc. by the same symbol, while the induced metric tensors are denoted by $g^*$, $\wedge^2 g$, etc. For elements $e_1, e_2$, we write $g( e_1, e_2 ) = \langle e_1, e_2\rangle_g$ and $|e_1|_g = \langle e_1, e_1 \rangle^{1/2}_g$ even in the cases when $g$ is only positive semi-definite. If $\mu$ is a chosen volume density on $M$ and $f$ is a function on $M$, we write $\| f\|_{L^p}$ for the corresponding $L^p$-norm with the volume density being implicit. If $Z \in \Gamma(E)$ then $\| Z\|_{L^p(g)} := \| | Z|_g \|_{L^p}$.

For $x \in M$, if $\mathscr{A} \in \End T_xM$ is an endomorphism, we will let $\mathscr{A}^\transpose \in \End T_x^* M$ denote its transpose. If $g$ has a Riemannian metric, then $\mathscr{A}^* \in \End T_x^*M$ denotes its dual. In other words,
$$\langle \mathscr{A} v, w\rangle_{g} = \langle v, \mathscr{A}^* w\rangle_{g}, \quad (\mathscr{A}^\transpose \alpha)(v) = \alpha(\mathscr{A} v), \quad \alpha \in T_x^*M,\quad v, w \in T_xM.$$
The same conventions apply for endomorphisms of $T^*M$. If $\mathscr{A}$ is a differential operator, then $\mathscr{A}^*$ is defined with respect to the $L^2$-inner product of $g$.

\subsection{Taming metrics} Given a sub-Riemannian manifold $(M, H, g_H)$, a Riemannian metric $g$ on $M$ is said to \emph{tame} $g_H$ if $g|H = g_H$. If $\mathsf{d}_{g}$ is the corresponding Riemannian distance, then $\mathsf{d}_{g}(x,y) \leq \mathsf{d}_{g_H}(x,y)$ for any $x,y \in M$, since curves tangent to $H$ have equal length with respect to both metrics, while $\mathsf{d}_{g}$ considers the infimum of the lengths over curves that are not tangent to $H$ as well. It follows that if $\mathsf{d}_{g}$ is complete, then $\mathsf{d}_{g_H}$ is a complete metric as well, as observed in \cite[Theorem~7]{Str86}. By \cite[Theorem~2.4]{Str83}, if~$g$ is a complete Riemannian metric taming~$g_H$, then the sub-Laplacian $\Delta_{H}$ with respect to the volume density of $g$ and the Laplace-Beltrami operator $\Delta_g$ are both essentially self adjoint on $C^\infty_c(M)$.

We then denote the corresponding orthogonal projection to $H$ by $\pr_{H}$. Let $\flat:TM \to T^*M$ be the vector bundle isomorphism $v \mapsto \langle v, \blank \rangle_{g}$ with inverse $\sharp$. The fact that $g$ tames~$g_H$ is equivalent to the statement $\shh = \pr_{H} \sharp$. Let $V$ denote the orthogonal complement of $H$ with corresponding projection.
\emph{The curvature} $\calR$ and \emph{the cocurvature} $\bar{\calR}$ of $H$ with respect to the complement $V$ are defined as
\begin{equation} \label{CurvCocurv} \calR(A,Z) = \pr_{V} [\pr_{H} A, \pr_{H} Z], \quad \bar{\calR}(A,Z) = \pr_{H} [\pr_{V} A, \pr_{V} Z],
\end{equation}
for $A, Z \in \Gamma(TM)$. By definition, $\calR$ and $\bar{\calR}$ are vector-valued two-forms, and $\bar{\calR}$ vanishes if and only if $V$ is integrable. The curvature and the cocurvature only depend on the direct sum $TM = H \oplus V$ and not the metrics $g_H$ or $g$.

\subsection{Connections compatible with the metric} \label{sec:Compatible}
Let $\nabla$ be an affine connection on $TM$. We say that $\nabla$ is \text{compatible} with the sub-Riemannian structure $(H, g_H)$ or $g_H^*$ if $\nabla g_H^* =0$. 
This condition is equivalent to requiring that $\nabla$ preserves the horizontal bundle $H$ under parallel transport and that $Z \langle A_1, A_2\rangle_{g_H} = \langle  \nabla_Z A_1, A_2 \rangle_{g_H} + \langle A_1, \nabla_Z A_2\rangle_{g_H}$ for any $Z \in \Gamma(TM)$, $A_1, A_2 \in \Gamma(H)$. For any sub-Riemannian manifold $(M, H, g_H)$, the set of compatible connections is non-empty. Let $\tilde g$ be any Riemannian metric on $M$ and define $V$ as the orthogonal complement to $H$. Let $\pr_{H}$ and $\pr_{V}$ be the corresponding orthonormal projections. Define
$$g = \pr_{H}^* g_H + \pr_{V}^* \tilde g|{V}.$$
Then $g$ is a metric taming $g_H$. Let $\nabla^g$ be the Levi-Civita connection of $g$ and define finally
\begin{equation} \label{Exists} \nabla^0 := \pr_{H}  \nabla^g \pr_{H} + \pr_{V} \nabla^g \pr_{V}.\end{equation}
The connection $\nabla^0$ will be compatible with $g_H^*$ and also with $g$.

\subsection{Rough sub-Laplacians}
In this section, we will introduce rough sub-Laplacians and compare them to the sub-Laplacian as defined in \eqref{srL}. Let $g_H^* \in \Gamma(\Sym^2 TM)$ be a sub-Riemannian structure on $M$ with horizontal bundle $H$. For any two-tensor $\xi \in \Gamma(T^*M^{\otimes 2})$ we write $\tr_{H} \xi(\times, \times) := \xi(g_H^*)$. We use this notation since for any $x\in M$ and any orthonormal basis $v_1, \dots, v_n$ of $H_x$
$$\tr_H \zeta(x)(\times, \times) = \sum_{i=1}^n \zeta(x)(v_i, v_i).$$
For any affine connection $\nabla$ on $TM$, define the Hessian $\nabla^2$ by
$$\nabla^2_{A, B} = \nabla_A \nabla_B - \nabla_{\nabla_A B}.$$
We define \emph{the rough sub-Laplacian} $L(\nabla)$ as $L(\nabla) = \tr_H \nabla^2_{\times, \times}$. Since $\nabla$ induces a connection on all tensor bundles, $L(\nabla)$ is defines as an operator on tensors in general. We have the following result.

\begin{lemma} \label{lemma:DualSL}
\begin{enumerate}[\rm(a)]
\item Let $\mu$ be a volume density on $M$ with corresponding sub-Laplacian $\Delta_H$. Assume that $H$ is a proper subbundle in $TM$. Then there exists some connection $\nabla$ compatible with $g_H^*$ and satisfying $L(\nabla) f = \Delta_H f$. 
\item Let $g$ be a Riemannian metric taming $g_M$ and with volume form $\mu$. Let $\nabla$ be a connection compatible with both $g_H^*$ and $g$. Let $T^\nabla$ be the torsion of~$\nabla$ and define the one-from $\beta$ by
$$\beta(v) = \tr T^\nabla(v, \blank ).$$
Then the dual of $L = L(\nabla)$ on tensors is given by
$$L^* = L - 2\nabla_{\shh \beta} - \dv_\mu \shh \beta = L + (\nabla_{\shh \beta})^* - \nabla_{\shh \beta}.$$
In particular, $Lf = \Delta_{H} f + \langle \beta, df\rangle_{g_H^*}$ for any $f \in C^\infty(M)$.
\end{enumerate}
\end{lemma}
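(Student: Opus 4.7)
For part (a) the plan is to start from any compatible connection $\nabla^0$, e.g.\ the one furnished by \eqref{Exists}, and perturb it by a $(1,2)$-tensor $S$ so as to cancel the first-order discrepancy between $L(\nabla^0)$ and $\Delta_H$. In a local $g_H$-orthonormal frame $A_1,\dots,A_n$ of $H$, unfolding both definitions gives
\[
L(\nabla^0) f - \Delta_H f = -W_0 f, \qquad W_0 := \sum_{i=1}^n \bigl( \nabla^0_{A_i} A_i + \dv_\mu(A_i)\, A_i \bigr),
\]
and $W_0$ is horizontal since $\nabla^0$ preserves $H$. For $\nabla = \nabla^0 + S$, compatibility with $g_H^*$ is equivalent to requiring $S_Z$ preserve $H$ and act skew-symmetrically on $(H, g_H)$, while $L(\nabla) = \Delta_H$ on functions reduces to the pointwise identity $\sum_i S_{A_i} A_i = W_0$. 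The main step is solvability of this identity under the skew constraint. Since $H$ is bracket-generating and a proper subbundle of $TM$, one has $\mathrm{rank}\, H \geq 2$, and the linear map $S|_H \mapsto \sum_i S_{A_i} A_i$ from $H^* \otimes \mathfrak{so}(H, g_H)$ into $H$ is surjective at every point; a local solution (say $S_Z = \alpha(Z) J_0$ extended by zero on $V$, with $\alpha$ and the skew endomorphism $J_0$ chosen to produce $W_0$) can then be patched globally by a partition of unity.

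For part (b), the plan is to compute the formal adjoint $L^*$ on tensors via integration by parts. Because $\nabla g = 0$, the connection $\nabla$ preserves the inner product on every tensor bundle, and for any vector field $Z$ one has
\[
(\nabla_Z)^* = -\nabla_Z - \dv_\mu(Z)
\]
on compactly supported tensors. Writing $L = \sum_i \nabla_{A_i}\nabla_{A_i} - \nabla_Y$ with $Y := \sum_i \nabla_{A_i} A_i$ and taking adjoints term by term, the second-order contributions reassemble into $L$, and the leftover is a first-order correction $2\nabla_{Z_0}$ together with the zeroth-order correction $\dv_\mu(Z_0)$, where $Z_0 := Y + \sum_i \dv_\mu(A_i) A_i$ is the horizontal vector field obtained by combining the two orders of integration by parts.

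The remaining, bookkeeping-heavy step is to identify $Z_0$ with $\shh\beta$ up to the sign convention on torsion. For this I would decompose $\nabla = \nabla^g + \Psi$ with $\Psi_X \in \mathfrak{so}(TM, g)$, compute $\tr T^\nabla(v, \blank)$ in a $g$-orthonormal frame $E_1,\dots,E_{n+\nu}$ of $TM$, and then project horizontally. Using the Riemannian identity $\sum_E \nabla^g_E E = -\sum_E \dv_\mu(E)\, E$ together with the observation that joint compatibility with $g_H^*$ and $g$ forces $\nabla$ to also preserve the $g$-orthogonal complement $V$ of $H$ (so $\nabla_{B_\alpha} B_\alpha \in V$ for any local $g$-orthonormal frame $B_\alpha$ of $V$), the vertical contributions cancel in $\pr_H \sum_E \Psi_E E$ and one recovers $\shh\beta = \pm Z_0$. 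Matching this with the formula for $L^*$ produced above yields the two displayed expressions for the adjoint, while the formula $Lf = \Delta_H f + \langle \beta, df\rangle_{g_H^*}$ on functions is immediate from the same frame computation as in part (a), applied now to $\nabla$ itself.
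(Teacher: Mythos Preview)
Your approach to both parts is correct and close in spirit to the paper's, with a genuine difference in the mechanics of part~(b).

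For part~(a), the paper does exactly what you propose: it starts from the compatible connection $\nabla^0$ of \eqref{Exists}, observes that $L(\nabla^\kappa)f = L(\nabla^0)f + (\tr_H \kappa(\times)^\transpose \times)f$ for any endomorphism-valued one-form $\kappa$ satisfying the compatibility constraint~\eqref{KappaCompatible}, and then constructs an explicit local $\kappa$ producing the required horizontal discrepancy vector field $Z = \Delta_H - L(\nabla^0)$. Your construction via $S_Z = \alpha(Z) J_0$ with $J_0 \in \mathfrak{so}(H)$ is the dual (tangent-side) version of the same idea; both rely on $\mathrm{rank}\,H \geq 2$, which you correctly deduce from the standing bracket-generating hypothesis together with properness.

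For part~(b), the paper avoids the term-by-term integration by parts you outline. Instead it introduces the first-order operator $F(g_H^*) = \sum_i \flat A_i \otimes \nabla_{A_i}$ on tensors, computes its formal adjoint in one stroke using the divergence identity~\eqref{divergence} (which encodes the torsion trace $\beta$), and then reads off $F(g_H^*)^* F(g_H^*) = -L + \nabla_{\shh\beta}$. Self-adjointness of the left side immediately yields $L^* = L + (\nabla_{\shh\beta})^* - \nabla_{\shh\beta}$. This factorization has the advantage that the identification of the first-order correction with $\shh\beta$ falls out of the single adjoint computation, rather than requiring the separate ``bookkeeping-heavy'' matching of $Z_0$ against $\shh\beta$ via $\nabla = \nabla^g + \Psi$ that you describe. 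Your route is more elementary and perfectly valid, but the paper's packaging makes the structural reason for the appearance of $\beta$ more transparent and is reused later (e.g.\ in Lemma~\ref{lemma:DualHatSL}).
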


\begin{proof}
\begin{enumerate}[\rm (a)]
\item If $H$ is properly contained in $TM$, then there is some Riemannian metric $g$ such that $g|H = g_H$ and such that $\mu$ is the volume form $g$. Define $\nabla^0$ as in \eqref{Exists} and for any endomorphism valued one-form $\kappa \in \Gamma(T^*M \otimes \End T^*M)$, define a connection $\nabla^\kappa_v = \nabla^0_v + \kappa(v)$. The connection $\nabla^\kappa$ is compatible with $\nabla^0$ if and only if
\begin{equation} \label{KappaCompatible} \langle \kappa(v) \alpha, \alpha \rangle_{g_H^*} = 0, \quad v \in TM, \alpha \in T^*M.\end{equation}
Furthermore, $L(\nabla^\kappa) = L(\nabla^0) f +(\tr_H\kappa(\times)^\transpose \times )f$.

Define $Z = \Delta_H - L(\nabla^0)$. We want to show that there is an endo\-morphism-valued one-form $\kappa$ such that $\tr_H \kappa(\times)^\transpose \times = Z$ and such that \eqref{KappaCompatible} holds. By a partition of unity argument, it is sufficient to consider $Z$ as defined on a small enough neighborhood $U$ such that both $TM$ and $H$ are trivial. Let $\eta$ be any one-form on $U$ such that
$$\langle \eta , \beta \rangle_{g_H^*} =1, \quad \eta(Z) = 0.$$
Let $\zeta$ be a one-form such that $\shh \zeta = Z$. Define $\kappa$ by
$$\kappa(v) \alpha = \eta(v) \big( \alpha(Z) \eta-\alpha(\shh \eta) \zeta  \big).$$
This one-from $\kappa$ has the desired properties.

\item
For any connection $\nabla$ preserving the Riemannian metric $g$, we have
\begin{equation} \label{divergence} \dv_\mu Z = \sum_{i=1}^n \langle \nabla_{A_i} Z, A_i \rangle_{g} +  \sum_{s=1}^\nu \langle \nabla_{Z_s} Z, Z_s \rangle_{g} -  \beta(Z),\end{equation}
with respect to local orthonormal bases $A_1, \dots, A_n$ and $Z_1, \dots, Z_\nu$ of respectively $H$ and $V$.

For any pair of vector fields $A$ and $B$ consider an operator $F(A \otimes B) = \flat A \otimes \nabla_B $ on tensors with dual
$$F(A \otimes B)^* = -\iota_{(\dv B) A} - \iota_{\nabla_B A} -\iota_{A} \nabla_B.$$
Extend $F$ to arbitrary sections of $TM^{\otimes 2}$ by $C^\infty(M)$-linearity. Consider the operator $F(g_H^*)$. Since $\nabla$ preserves $H$, its orthogonal complement $V$ and their metrics, around any point $x$ we can find local orthonormal bases $A_1, \dots, A_n$ and $Z_1, \dots, Z_\nu$ of respectively $H$ and $V$ that are parallel at any arbitrary point~$x$. Hence, in any local orthonormal basis
$$ F(g_H^*)^* = \iota_{\shh \beta} - \sum_{i=1}^n \iota_{A_i} \nabla_{A_i}, $$
and so
\begin{equation*}
F(g_H^*)^* F(g_H^*) = - L + \nabla_{\shh \beta} = - L^* + \left(\nabla_{\shh \beta} \right)^* .\qedhere
\end{equation*}
\end{enumerate}
\end{proof}

\begin{remark}
As a result of the proof of Lemma~\ref{lemma:DualSL}, we actually know that all second order operators on the form $L(\nabla^0) +Z$ for some $Z \in \Gamma(H)$ is given as the rough sub-Laplacian of some connection compatible with the metric $g_H$.
\end{remark}

\section{Adjoint connections and infinite lifetime}

\subsection{A Weitzenb\"ock formula for sub-Laplacians} \label{sec:Weitz}

In the case of Riemannian geometry $g_H = g$, one of the central identities involving the rough Laplacian of the Levi-Civita connection $L(\nabla^g)$ is the Weitzenb\"ock formula $L(\nabla^{g})df  = \Ric_{g}(\sharp df, \blank) + dL(\nabla^{g}) f = \Ric_{g}(\sharp df, \blank) + d\Delta_{g} f$. A similar formula can be introduced in sub-Riemannian geometry, as was observed in \cite{EJL99} using the concept of adjoint connections. Adjoint connections was first considered in \cite{Dri92}.

If $\nabla$ is a connection on $TM$ with torsion $T^\nabla$, then its adjoint $\hat{\nabla}$ is defined by
$$\hat \nabla_A B = \nabla_A B- T^\nabla(A,B).$$
for any $A, B \in \Gamma(TM)$. We remark that $-T^\nabla$ is the torsion of $\hat{\nabla}$, so $\nabla$ is the adjoint of $\hat \nabla$.

\begin{proposition}[Sub-Riemannian Weitzenb\"ock formula] \label{prop:Weitz}
Let $L$ be any rough sub-Laplacian of an affine connection. Then there exists a vector bundle endomorphism $\mathscr{A}: T^*M \to T^*M$ such that for any $f \in C^\infty(M)$,
\begin{equation} \label{ZeroOrder} (L - \mathscr{A}) df = d Lf\end{equation}
if and only if $L = L(\hat{\nabla})$ for some adjoint $\hat{\nabla}$ of a connection $\nabla$ that is compatible with $g_H^*$. In this case, $\mathscr{A} = \Ric(\nabla)$, where
\begin{equation} \label{Ric} \Ric(\nabla)(\alpha)(v) := \tr_H R^\nabla(\times, v)\alpha (\times).\end{equation}
\end{proposition}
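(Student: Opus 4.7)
The plan is to compute the commutator $L\,df - dLf$ directly and extract the condition on $\nabla'$ (where $L = L(\nabla')$) from the requirement that this commutator be $C^\infty$-linear in $df$. A priori the commutator could be a second-order differential operator in $f$, so tensoriality is a strong constraint. A useful starting observation is that every affine connection is the adjoint of its own adjoint, so one may write $L(\nabla') = L(\hat\nabla)$ with $\nabla := \widehat{\nabla'}$, and study $L(\hat\nabla)\,df - dL(\hat\nabla)f$ for an arbitrary $\nabla$ instead.

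The key preliminary identity, which explains the role of adjoint connections here and powers the calculation, is
\[
d(Xf) = \hat\nabla_X(df) + \omega^f_X, \qquad \omega^f_X(Y) := (\nabla_Y X)f,
\]
valid for every $f\in C^\infty(M)$ and $X \in \Gamma(TM)$; the one-line check combines $(\hat\nabla_X df)(Y) = X(Yf) - (\hat\nabla_X Y)f$ with $\hat\nabla_X Y = \nabla_Y X + [X,Y]$. Applying it twice inside the local-frame expansion $L(\hat\nabla)f = \sum_i A_i(A_if) - (\nabla_{A_i}A_i)f$ (in a local orthonormal frame $\{A_i\}$ of $H$) produces
\[
dL(\hat\nabla)f - L(\hat\nabla)(df) = \sum_i \bigl[\hat\nabla_{A_i}\omega^f_{A_i} + \omega^{A_if}_{A_i} - \omega^f_{\nabla_{A_i}A_i}\bigr].
\]
Expanding the right-hand side on a test vector $Y$ via the Leibniz rule $A_i(Vf) = [A_i,V]f + V(A_if)$ and the curvature identity $\nabla^2_{X,Y} - \nabla^2_{Y,X} = R^\nabla(X,Y) - \nabla_{T^\nabla(X,Y)}$, everything assembles into three pieces: a curvature contribution equal to $-\Ric(\nabla)(df)(Y)$, a first-order-in-$f$ piece involving $\nabla_{\nabla_Y A_i}A_i$ and $T^\nabla(A_i,\nabla_Y A_i)$, and a second-order-in-$f$ piece $2\sum_i(\nabla_Y A_i)(A_i f)$.

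The proposition then reduces to a symbolic analysis. The principal symbol of the second-order piece, for fixed $Y$, is the symmetrization of $\sum_i (\nabla_Y A_i) \otimes A_i$; this vanishes for every $Y$ if and only if $\nabla_Y A_i \in H$ (so that $\nabla$ preserves $H$ under parallel transport) and, writing $\nabla_Y A_i = \sum_j c^Y_{ij} A_j$, the coefficients $c^Y_{ij}$ are antisymmetric in $i,j$ (metric compatibility on $H$)---together, precisely $\nabla g_H^* = 0$. Under compatibility, the antisymmetry of $c^Y_{ij}$ further converts the principal-symbol term into a first-order commutator $2\sum_{i<j} c^Y_{ij}[A_j,A_i]f$, and a reindexing argument using both the antisymmetry of $c^Y$ and of $T^\nabla$ shows that every remaining first-order contribution telescopes to zero. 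This simultaneously gives both implications of the equivalence and identifies $\mathscr{A} = \Ric(\nabla)$.

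The main technical obstacle is the cancellation of first-order terms under compatibility: the several contributions involving $\nabla_{\nabla_Y A_i}A_i$, $T^\nabla(A_i,\nabla_Y A_i)$ and the frame commutators $[A_j,A_i]$ must be shown to sum to zero, and recognizing this requires the parallel use of both antisymmetries present. Once this bookkeeping step is in hand, the identification $\mathscr{A} = \Ric(\nabla)$ follows immediately from the definition of $\Ric(\nabla)$ in \eqref{Ric}.
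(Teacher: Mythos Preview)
Your argument is correct and complete in outline; the cancellation of first-order terms you flag as the ``main technical obstacle'' does indeed go through exactly as you describe (once the second-order term is rewritten via the antisymmetry of $c^Y$ as $-\sum_{i,j}c^Y_{ij}[A_i,A_j]f$, the remaining pieces $-2\sum_i(\nabla_{\nabla_YA_i}A_i)f$ and $-\sum_i T^\nabla(A_i,\nabla_YA_i)f$ combine and cancel after one relabelling $i\leftrightarrow j$).

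Your route differs from the paper's in a meaningful way. The paper treats the two implications separately: for the ``if'' direction it exploits compatibility to choose a local orthonormal frame with $\nabla A_i(x)=0$ and computes at the single point $x$, which kills the frame-derivative terms from the outset; for the ``only if'' direction it fixes one compatible connection $\nabla$ in advance, writes the given connection as $\hat\nabla^\kappa=\hat\nabla+\hat\kappa$, and invokes Lemma~\ref{lemma:Adjoiint} to isolate the obstruction as the first-order operator $D=\tr_H\hat\kappa(\times)\hat\nabla_\times$, whose tensoriality forces $\hat\kappa(\sharp^H\alpha)\alpha=0$. Your approach is more self-contained: you take the adjoint of the given connection as $\nabla$ from the start and carry the full frame computation via the identity $d(Xf)=\hat\nabla_X(df)+\omega^f_X$, extracting compatibility directly from the vanishing of the principal symbol. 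This avoids both the parallel-frame trick and the auxiliary Lemma~\ref{lemma:Adjoiint}, at the cost of having to verify the first-order cancellation by hand rather than getting it for free. The paper's decomposition has the side benefit that Lemma~\ref{lemma:Adjoiint} is reused later (e.g.\ in Lemma~\ref{lemma:MetricTorsion}), whereas your computation is tailored to this single result.
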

We note that bracket-generating assumption is not necessary for this result.
\begin{remark} \label{re:Conj}
\begin{enumerate}[\rm (i)]
\item Let $\nabla$ be a connection satisfying $\nabla g_H^* =0$ and let $\hat{\nabla}$ be its adjoint. By \cite[Proposition~2.1]{GoGr15} any smooth curve $\gamma$ in $M$ is a normal sub-Riemannian geodesic if and only if there is a one-form $\lambda(t)$ along $\gamma(t)$ such that
$$\shh \lambda(t) = \dot \gamma(t), \quad \text{ and } \quad \hat{\nabla}_{\dot \gamma} \lambda(t) = 0.$$
See the reference for the definition of normal geodesic. In this sense, adjoints of compatible connections occur naturally in sub-Riemannian geometry.
\item A Weitzenb\"ock formula in the sub-Riemannian case first appeared in \cite[Chapter~2.4]{EJL99}. See also \cite{Elw17}. This formulation assumes that the connection $\nabla$ can be represented as a Le Jan-Watanabe connection. For definition and the proof of the fact that all connections on a vector bundle compatible with some metric there are of this type, see \cite[Chapter~1]{EJL99}. We will do the proof of Proposition~\ref{prop:Weitz} without this assumption, in order to obtain an equivalence between existence of a Weitzenb\"ock formula and being an adjoint of a compatible connection.
 \end{enumerate}
\end{remark}

Before continuing with the proof, we will need the next lemma.
\begin{lemma} \label{lemma:Adjoiint}
Let $\nabla$ be an affine connection with adjoint $\hat{\nabla}$. Assume that~$\nabla$ is compatible with $g_H^*$ and denote  $L = L(\nabla)$, $\Ric = \Ric(\nabla)$ and $\hat{L} = L(\hat{\nabla})$.
For any endomorphism-valued one-form $\kappa \in \Gamma(T^*M \otimes \End T^*M)$ let $\nabla^\kappa$ be the connection
\begin{equation} \label{NablaKappa} \nabla^\kappa_v := \nabla_v + \kappa(v), \quad v \in TM. \end{equation}
\begin{enumerate}[\rm (a)]
\item If the horizontal bundle $H$ is a proper subbundle of $TM$, but bracket-generating, then the connection $\hat{\nabla}$ does not preserve $H$ under parallel transport.
\item \label{item:Lkappa} Define $L^\kappa = L(\nabla^\kappa)$. Then
$$L^\kappa = L + \nabla_{Z^\kappa}+ 2D^\kappa  + \kappa(Z^\kappa)+  \tr_H (\nabla_{\times}  \kappa)(\times) + \tr_H \kappa(\times)  \kappa(\times) $$
where $Z^\kappa = \tr_H \kappa(\times)^\transpose \times$ and $D^\kappa = \tr_H \kappa(\times) \nabla_{\times}$. In particular, for any function $f\in C^\infty(M)$,
$$L^\kappa f = L f+ Z^\kappa f \quad \text{ and } \quad \hat{L} f = Lf.$$
\item The adjoint $\hat{\nabla}^\kappa$ of $\nabla^\kappa$ is given by $\hat{\nabla}_v^\kappa = \hat{\nabla}_v + \hat{\kappa}(v)$ where
$$(\hat{\kappa}(v)\alpha)(w) := (\kappa(w) \alpha)(v), \quad \text{ for $v,w \in TM,\ \alpha \in T^*M.$}$$
In particular, if $\nabla^\kappa$ is compatible with $g_H^*$ then $\hat{\kappa}(\shh \alpha) \alpha = 0$ for any $\alpha \in T^*M$.
\end{enumerate}
\end{lemma}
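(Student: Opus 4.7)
For part~(a), I would argue by contradiction. Since $\nabla g_H^\ast = 0$, the connection $\nabla$ preserves $H$ under parallel transport. If $\hat\nabla$ did so as well, then for any $A, B \in \Gamma(H)$ the difference $T^\nabla(A,B) = \nabla_A B - \hat\nabla_A B$ would be horizontal. Expanding $T^\nabla(A,B) = \nabla_A B - \nabla_B A - [A,B]$ and noting that $\nabla_A B, \nabla_B A \in \Gamma(H)$, this would force $[A,B] \in \Gamma(H)$ for all $A, B \in \Gamma(H)$. Hence $H$ would be involutive, contradicting bracket-generation together with the assumption $H \subsetneq TM$.

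For part~(b), the plan is a direct expansion of the Hessian $(\nabla^\kappa)^2_{A,B}\alpha$ on $T^\ast M$. The one subtle point is that, for $\nabla^\kappa$ to extend consistently across tensor bundles, the induced action on vector fields must read $\nabla^\kappa_A B = \nabla_A B - \kappa(A)^\transpose B$. Substituting $\nabla^\kappa = \nabla + \kappa$ into $\nabla^\kappa_A \nabla^\kappa_B \alpha - \nabla^\kappa_{\nabla^\kappa_A B}\alpha$, the $\kappa(\nabla_A B)\alpha$ contributions cancel, leaving the bare Hessian $\nabla^2_{A,B}\alpha$, two first-order cross terms $\kappa(A)\nabla_B\alpha$ and $\kappa(B)\nabla_A\alpha$, a derivative term $(\nabla_A\kappa)(B)\alpha$, a quadratic $\kappa(A)\kappa(B)\alpha$, and the correction $\nabla_{\kappa(A)^\transpose B}\alpha + \kappa(\kappa(A)^\transpose B)\alpha$. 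Tracing over $H$ reassembles these into the stated six-summand formula involving $Z^\kappa$ and $D^\kappa$. For a scalar $f$ only $df(\kappa(A)^\transpose B)$ survives outside the bare Hessian, giving $L^\kappa f = Lf + Z^\kappa f$. The identity $\hat L f = Lf$ then drops out as the special case in which $\kappa(A)^\transpose B = T^\nabla(A,B)$, because $Z^{\hat\kappa} = \tr_H T^\nabla(\times, \times) = 0$ by antisymmetry of the torsion.

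For part~(c), I first compute the torsion $T^{\nabla^\kappa}(A,B) = T^\nabla(A,B) - \kappa(A)^\transpose B + \kappa(B)^\transpose A$ directly from the definition, and then form $\hat\nabla^\kappa_A B = \nabla^\kappa_A B - T^{\nabla^\kappa}(A,B) = \hat\nabla_A B - \kappa(B)^\transpose A$. Matching against the ansatz $\hat\nabla^\kappa_v = \hat\nabla_v + \hat\kappa(v)$ and transposing identifies $\hat\kappa$ via $(\hat\kappa(v)\alpha)(w) = (\kappa(w)\alpha)(v)$. For the final assertion, subtracting $\nabla g_H^\ast = 0$ from $\nabla^\kappa g_H^\ast = 0$ yields $g_H^\ast(\kappa(v)\alpha, \beta) + g_H^\ast(\alpha, \kappa(v)\beta) = 0$; specialising to $\beta = \alpha$ gives $\langle \kappa(v)\alpha, \alpha\rangle_{g_H^\ast} = 0$ for all $v, \alpha$. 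Evaluating $\hat\kappa(\shh\alpha)\alpha$ on an arbitrary $w \in TM$ then gives $(\hat\kappa(\shh\alpha)\alpha)(w) = (\kappa(w)\alpha)(\shh\alpha) = \langle \kappa(w)\alpha, \alpha\rangle_{g_H^\ast} = 0$, so $\hat\kappa(\shh\alpha)\alpha = 0$. The main obstacle is the careful bookkeeping in part~(b): throughout the Hessian expansion one must consistently track the action of $\kappa$ on covectors versus its transpose on vectors, after which parts~(a) and~(c) reduce to one-step algebraic arguments.
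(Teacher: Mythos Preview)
Your proposal is correct and follows essentially the same approach as the paper: part~(a) is the same torsion-based argument (the paper phrases it directly rather than by contradiction), part~(b) is the same direct expansion of the perturbed Hessian (the paper writes it in a local orthonormal basis, you describe it invariantly before tracing), and part~(c) is exactly the computation the paper summarizes as ``follows from the definition and \eqref{KappaCompatible}.'' Your write-up actually supplies more detail than the paper does, particularly in part~(c) and in the careful tracking of $\kappa$ versus $\kappa^\transpose$.
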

\begin{proof}
\begin{enumerate}[\rm (a)]
\item Let $A, B \in \Gamma(H)$ be any two vector fields such that $[A,B]$ is not contained in~$H$.
Observe that $\hat{\nabla}_A B = \nabla_{B} A + [A,B]$ then cannot be contained in $H$ either.
\item This follows by direct computation: for any local orthonormal basis $A_1,\dots,A_n$ of $H$, we have
\begin{align*}
L^\kappa & = \sum_{i=1}^n \left(\nabla_{A_i} + \kappa(A_i) \right) \left(\nabla_{A_i} + \kappa(A_i) \right)\\
& \quad - \sum_{i=1}^n \left(\nabla_{\nabla_{A_i} A_i - \kappa(A_i)^\transpose A_i} + \kappa(\nabla_{A_i} A_i - \kappa(A_i)^\transpose A_i) \right) \\
& = \sum_{i=1}^n \nabla_{A_i} \nabla_{A_i}  + \sum_{i=1}^n \nabla_{A_i}  \kappa(A_i) + \sum_{i=1}^n  \kappa(A_i)  \nabla_{A_i} + \sum_{i=1}^n  \kappa(A_i)  \kappa(A_i)  \\
& \quad + \nabla_{Z^\kappa} + \kappa(Z^\kappa) - \sum_{i=1}^n \left(\nabla_{\nabla_{A_i} A_i } + \kappa(\nabla_{A_i} A_i) \right)     \\
&  = L  + 2 \tr_H  \kappa(\times)  \nabla_{\times} + \tr_H (\nabla_{\times}  \kappa)(\times)  + \tr_H \kappa(\times)  \kappa(\times) + \nabla_{Z^\kappa}^\kappa + \kappa(Z^\kappa)  .
\end{align*}
\item Follows from the definition and \eqref{KappaCompatible}
\qedhere
\end{enumerate}
\end{proof}

\begin{proof}[Proof of Proposition~\ref{prop:Weitz}]
Notice that $\iota_A \nabla_B df = \iota_{B} \hat{\nabla}_A df$. Since $\nabla$ is compatible with $g_H^*$, for any $x \in M$ there is a local orthonormal basis $A_1, \dots, A_n$ of $H$ such that $\nabla A_j (x) = 0$.
Hence, for an arbitrary vector field $Z \in \Gamma(TM)$, with the terms below evaluated at $x \in M$ implicitly,
\begin{align*}
\iota_Z dL(\hat \nabla)f & = \iota_Z d L(\nabla)f = Z \sum_{i=1}^n \nabla_{A_i} df(A_i) = \sum_{i=1}^n \nabla_Z \nabla_{A_i} df( A_i) \\
& = \sum_{i=1}^n \iota_{A_i}  R^{\nabla}(Z,A_i) df + \sum_{i=1}^n \nabla_{A_i} \nabla_Z df(A_i) + \nabla_{[Z,A_i]} df(A_i) \\
& = - \Ric(df)(Z) + \sum_{i=1}^n A_i \nabla_Z df(A_i) - \nabla_{\hat{\nabla}_{A_i} Z} df(A_i) \\
& = - \Ric(df)(Z) + \sum_{i=1}^n A_i \hat{\nabla}_{A_i} df(Z) - \hat{\nabla}_{A_i} df(\hat{\nabla}_{A_i} Z) \\
& = \iota_Z(- \Ric(df) +L(\hat \nabla) df).
\end{align*}
Since $x$ was arbitrary, it follows that $L(\hat{\nabla})$ satisfies \eqref{ZeroOrder}.

Conversely, suppose that $L= L(\nabla^\prime)$ is an arbitrary rough Laplacian of $\nabla^\prime$. Let~$\nabla$ be an arbitrary connection compatible with $g_H^*$ and define $\kappa$ such that $\nabla^\prime_{v} = \hat{\nabla}^\kappa_v = \hat{\nabla}_{v} + \hat{\kappa}(v)$, where $\nabla^\kappa$ is defined as in \eqref{NablaKappa}. Introduce vector field $Z =  \tr_H \hat{\kappa}(\times)^\transpose \times$ and first order operator $D =  \tr_H \hat{\kappa}(\times) \nabla_{\times}$. Using item~\eqref{item:Lkappa} of Lemma~\ref{lemma:Adjoiint}, modulo zero order operators applied to $df$, $L df - dLf$ equals $- dZf + \nabla_{Z} df + 2 D df$.
Furthermore, $- dZ f + \nabla_Z df = (\nabla_Z - \calL_Z) df$ and $(\nabla_Z - \calL_Z)$ is a zero order operator. Hence, it follows that \eqref{ZeroOrder} holds if and only if $D df = \mathscr{C} df$ for some zero order operator $\mathscr{C}$ and any $f \in C^\infty(M)$.

Let $A_1, \dots, A_n$ be a local orthonormal basis of $H$ and complete this basis to a full basis of $TM$ with vector fields $Z_1, \dots, Z_\nu$. Let $A_1^* ,\dots, A_n^*, Z_1^*, \dots, Z_\nu^*$ be the corresponding coframe.  Observe that $Z_1^* ,\dots, Z_\nu^*$ is a basis for $\Ann(H)$. For any $B \in \Gamma(TM)$ and $f \in C^\infty(M)$,
\begin{align*} (D df)(B) & = \sum_{i,k=1}^n\left( \hat{\kappa}(A_i) A_k^*(B) \right) \hat{\nabla}_{A_i} df(A_k) + \sum_{i=1}^n \sum_{s=1}^\nu \left( \hat{\kappa}(A_i) Z_s^*(B) \right) \hat{\nabla}_{A_i} df(Z_s).
\end{align*}
In order for this to correspond to a zero order operator, we must have $\hat{\kappa}(A_i) Z_s^* = 0$ and $\hat{\kappa}(A_i)(A_k^*) = - \hat{\kappa}(A_k)(A_i^*)$ which is equivalent to $\hat{\kappa}(\shh \alpha)\alpha = 0$ for any $\alpha \in T^*M$. Hence, $\hat{\nabla}^\kappa$ is the adjoint of a connection compatible with~$g_H^*$.
\end{proof}

\subsection{Connections with skew-symmetric torsion} \label{sec:IIzero}
For the sub-Riemannian manifold $(M, H, g_H)$ with $H$ strictly contained in $TM$, there exists no torsion-free connection which is compatible with the metric. Indeed, if $\nabla$ is a connection preserving~$H$, then the equality $\nabla_A B - \nabla_B A = [A,B]$ would imply that $H$ could be bracket-generating only if $H = TM$. For this reason, it has been difficult to find a direct analogue of the Levi-Civita connection in sub-Riemannian geometry.

For a Riemannian metric $g$, the only connections with the same geodesics as the Levi-Civita connection, are the compatible connections with \emph{skew-symmetric torsion}. These connections $\nabla$ compatible with $g$ such that
$$\zeta(v_1,v_2, v_3):= - \langle T^{\nabla}(v_1,v_2) , v_3 \rangle_{g}, \quad \text{$v_1$, $v_2$, $v_3$} \in TM,$$
is a well defined three-form. The connection $\nabla$ is then given by formula
$\nabla_A B = \nabla_A^g B - \frac{1}{2} \sharp \iota_{A \wedge B} \zeta$.
Equivalently, the connection $\nabla$ is compatible with $g$ and of skew-symmetric torsion if and only if we have both $\nabla g = 0$ and $\hat{\nabla} g = 0$. By Lemma~\ref{lemma:Adjoiint}~(a) we cannot have both $\nabla$ and $\hat{\nabla}$ being compatible with $g_H^*$ isn the proper sub-Riemannian case. In some cases, however, we have the following generalization.

Let $(M, H, g_H)$ be a sub-Riemannian manifold with taming Riemannian metric~$g$ and $V = H^\perp$. Let $\calL_A$ denote the Lie derivative with respect to the vector field~$A$. Introduce a vector-valued symmetric bilinear tensor $\II$ by the formula
\begin{equation} \label{II} \langle \II(A, A) , Z \rangle_{g} = - \frac{1}{2} (\calL_{\pr_{V} Z} g)(\pr_{H} A,\pr_{H} A) - \frac{1}{2} (\calL_{\pr_{H} Z} g)(\pr_{V} A,\pr_{V} A) \end{equation}
for any $A, Z \in \Gamma(TM)$. Observe that $\II = 0$ is equivalent to the assumption
\begin{equation} \label{IIzero} (\calL_A g) (Z, Z) = 0, \quad (\calL_Z g)(A, A) = 0, \end{equation}
for any $A \in \Gamma(H)$ and $Z \in \Gamma(V)$.
\begin{proposition} \label{prop:ChoiceCon}
Let $\nabla$ be a connection compatible with~$g_H^*$ and with adjoint~$\hat{\nabla}$. Assume that there exists a Riemannian metric~$g$ taming $g_H$ such that $\hat{\nabla} g = 0$. Then $\II=0$. Furthermore, if $\Delta_{H}$ is defined relative to the volume density of~$g$, then
$$\left(L(\hat \nabla) - \Ric(\nabla) \right) df = dL(\hat\nabla) f =  dL(\nabla)f = d\Delta_{H} f, \quad f \in C^\infty(M) .$$

Conversely,  suppose that $g$ is a Riemannian metric taming $g_H$ and satisfying $\II =0$.  Define $\calR$ and $\bar{\calR}$ as in \eqref{CurvCocurv} and introduce a three-form $\zeta$ by
\begin{equation} \label{TorsionForm}
\zeta(v_1,v_2, v_3) =  \circlearrowright \langle \calR(v_1, v_2), v_3 \rangle_{g} + \circlearrowright \langle \bar{\calR}(v_1, v_2), v_3 \rangle_{g}, \end{equation}
with $\circlearrowright$ denoting the cyclic sum. Then the connection
\begin{equation} \label{MaxRicCurv} \nabla_A B =  \nabla_A^{g} B - \frac{1}{2} \sharp \iota_{A \wedge B} \zeta \end{equation}
is a connection compatible with $g_H^*$ and both it and its adjoint $\hat{\nabla}_{A} B =\nabla_{A}^{g} B + \frac{1}{2} \sharp \iota_{A \wedge B} \zeta$ is compatible with $\hat{\nabla} g = 0$. 

Furthermore, among all such possible choices of connections, $\nabla$ gives the maximal value with regard to the lower bound of $\alpha \mapsto \langle \Ric(\nabla) \alpha, \alpha\rangle_{g_H^*}$.
\end{proposition}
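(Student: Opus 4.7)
\emph{Forward direction.} Assume $\hat\nabla g = 0$, with $\nabla$ compatible with $g_H^*$. The recurring tool is the identity $\hat\nabla_Y B = \nabla_B Y + [Y,B]$, valid for any connection and its adjoint. To establish $\II = 0$ I verify both conditions of \eqref{IIzero}. For $A \in \Gamma(H)$, $Z_1,Z_2 \in \Gamma(V)$: expand $A\langle Z_1,Z_2\rangle_g$ through $\hat\nabla g = 0$, substitute $\hat\nabla_A Z_i = \nabla_{Z_i}A + [A,Z_i]$, and observe that $\nabla_{Z_i}A \in H$ is $g$-orthogonal to the $Z_j \in V$; the surviving bracket terms cancel against those in $(\calL_A g)(Z_1,Z_2)$. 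For $Z \in \Gamma(V)$, $A_1,A_2 \in \Gamma(H)$ the argument is more delicate: equate the two expansions of $Z\langle A_1,A_2\rangle_g$ coming from $\nabla g_H^* = 0$ (using $\nabla_Z A_j \in H$) and from $\hat\nabla g = 0$ to obtain the auxiliary identity $\langle T^\nabla(Z,A_1),A_2\rangle_g + \langle T^\nabla(Z,A_2),A_1\rangle_g = 0$, and then differentiate $\langle Z,A\rangle_g = 0$ in a horizontal direction through $\hat\nabla g = 0$ to produce a formula for $\langle \nabla_A Z, A'\rangle_g$ whose $(A,A')$-symmetric part vanishes. The Weitzenböck chain is then immediate: the first equality is Proposition~\ref{prop:Weitz}, the second is Lemma~\ref{lemma:Adjoiint}(b), and $L(\nabla)f = \Delta_H f$ follows by a direct computation in a local $g$-orthonormal horizontal frame using that $\hat\nabla g = 0$ forces $\hat\nabla \mu_g = 0$, which pins down the first-order correction.

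\emph{Converse direction.} Given $\II = 0$, take $\nabla$ as in \eqref{MaxRicCurv}. Since $\zeta$ is a three-form, the torsion $T^\nabla = -\sharp\iota_{\cdot\wedge\cdot}\zeta$ is totally $g$-skew, and by the standard argument both $\nabla g = 0$ and $\hat\nabla g = 0$. The only nontrivial point is that $\nabla$ preserves $H$, i.e., $\langle \nabla_X A, Z\rangle_g = 0$ for $A \in \Gamma(H)$, $Z \in \Gamma(V)$, $X \in \Gamma(TM)$, which I verify by splitting on the type of $X$. For $X \in H$, Koszul's formula combined with $(\calL_Z g)(X,A) = 0$ (from $\II = 0$) gives $2\langle \nabla^g_X A, Z\rangle_g = \langle \calR(X,A), Z\rangle_g$; on this combination the $\bar\calR$-summands and the non-diagonal cyclic pieces of $\zeta$ all vanish by type, leaving $\zeta(X,A,Z) = \langle \calR(X,A), Z\rangle_g$, so the correction $-\tfrac12\sharp\iota_{X\wedge A}\zeta$ in \eqref{MaxRicCurv} kills the vertical component of $\nabla^g_X A$ exactly. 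For $X \in V$ the analogous computation using $(\calL_A g)(X,Z) = 0$ yields $2\langle \nabla^g_X A, Z\rangle_g = -\langle \bar\calR(X,Z), A\rangle_g = \zeta(X,A,Z)$, producing the same cancellation. Compatibility of $\nabla$ with $g_H$ on $H$ is inherited from $\nabla g = 0$.

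\emph{Maximality.} Any other admissible $\nabla'$ differs from $\nabla$ by $\nabla'_v = \nabla_v + \kappa(v)$; by Lemma~\ref{lemma:Adjoiint}(c), the constraints $\nabla' g_H^* = 0$ and $\hat\nabla' g = 0$ translate into algebraic conditions on $\kappa$ that, when combined with the horizontal-preservation analysis from the converse, force the difference $T^{\nabla'} - T^\nabla$ to arise from an additional three-form $\omega$ supported in $\bigwedge^3 H \oplus \bigwedge^3 V$ (any mixed-type component would spoil $H$-preservation of $\nabla'$). The change-of-connection formula for curvature then expresses $\Ric(\nabla') - \Ric(\nabla)$ as a quadratic expression in $\omega$. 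The main obstacle, and the hard part of the argument, is verifying that on this restricted family the correction $\alpha \mapsto \langle(\Ric(\nabla')-\Ric(\nabla))\alpha,\alpha\rangle_{g_H^*}$ is pointwise non-positive for every $\alpha \in T^*M$, so that the infimum of $\langle \Ric(\nabla)\alpha,\alpha\rangle_{g_H^*}$ over $g_H^*$-unit covectors is genuinely maximized at the $\nabla$ of \eqref{MaxRicCurv}. I anticipate that this reduces to identifying the correction with a manifestly non-positive $-|\omega|^2$-type expression after contracting with $g_H^*$, where the skew-symmetric constraints on $\omega$ make the sign definiteness automatic.
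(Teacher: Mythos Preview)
Your forward and converse directions take a different route from the paper (direct verification versus the paper's systematic parametrization through the auxiliary connection $\nabla^0$ of \eqref{Exists}), and the sketches are plausible, though the claim that $\hat\nabla\mu_g=0$ ``pins down'' $L(\nabla)f=\Delta_Hf$ deserves more care: Lemma~\ref{lemma:DualSL}(b) assumes $\nabla g=0$, which you do not know for a general $\nabla$ with only $\nabla g_H^*=0$ and $\hat\nabla g=0$. The paper obtains this identity only \emph{after} establishing the explicit form $\nabla'=\nabla^{\lambda,\beta}$ and checking $\tr T^{\nabla^{\lambda,\beta}}(v,\cdot)=0$ for $v\in H$.

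The maximality argument has a genuine gap. You assert that the difference of torsions comes from a three-form $\omega\in\bigwedge^3 H\oplus\bigwedge^3 V$. This is false: there is no reason for $\nabla'$ itself to be $g$-compatible (only $\hat\nabla'$ is), so $\langle T^{\nabla'}(\cdot,\cdot),\cdot\rangle_g$ need not be totally skew. The paper's analysis shows that the actual freedom is
\[
\nabla'_{Z_1}Z_2=\nabla_{Z_1}Z_2+\lambda(Z_2)Z_1+\sharp\,\iota_{Z_1\wedge Z_2}\beta,
\]
where $\beta$ is a three-form vanishing on $V$ and $\lambda$ is an $\End(TM)$-valued one-form vanishing on $H$ with $\lambda(v)^*=-\lambda(v)$. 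The $\lambda$-piece is not a three-form and does not live in $\bigwedge^3 V$; it can mix $H$ and $V$ under $\lambda(Z)$ for $Z\in V$. The key step you are missing is that $\lambda$ nonetheless drops out of $\langle\Ric(\nabla')\alpha,\alpha\rangle_{g_H^*}$: since $\lambda$ vanishes on $H$ one has $L(\hat\nabla^{\lambda,\beta})=L(\hat\nabla^{\beta})$, so the Ricci comparison reduces to comparing $\nabla$ with $\nabla^{\beta}$ only. After that, an explicit computation of $(\hat L^\beta-\hat L)df$ is still needed to see that the linear-in-$\beta$ term cancels and the correction is exactly $-2|\iota_{\sharp\alpha_H}\beta|^2_{\wedge^2 g_H^*}$; your ``anticipated $-|\omega|^2$'' does not substitute for this.
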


\begin{remark} 
\begin{enumerate}[\rm (i)]
\item \emph{Analogy to the Levi-Civita connection:} Applying Proportion~\ref{prop:ChoiceCon} to the case when $g_H = g$ is a Riemannian metric, the Levi-Civita connection can be described as the connection such that both $\nabla$ and $\hat{\nabla}$ are compatible with $g$ and that also maximizes the lower bound $\alpha \mapsto \langle \Ric(\nabla) \alpha, \alpha \rangle_{g^*}$ whitch was observed in \cite[Corollary~C.7]{EJL99}. In this sense, the connection in \eqref{MaxRicCurv} is analogous to the Levi-Civita connection.
\item \emph{Existence and uniqueness for a Riemannian metrics $g$ taming $g_H$ and satisfying \eqref{IIzero}}: Every such Riemannian metric $g$ is uniquely determined by the orthogonal complement $V$ of $H$ and its value at one point \cite[Remark 3.10]{GrTh14a}. Conversely, suppose that $(M, H, g_H)$ is a sub-Riemannian manifold and let~$V$ be a subbundle such that $TM = H \oplus V$. Then one can use horizontal holonomy to determine if there exists a Riemannian metric $g$ taming $g_H$, satisfying \eqref{IIzero} and making~$H$ and~$V$ orthogonal. See \cite{CGJK15} for more details and examples where no such metric can be found. Two Riemannian metrics~$g_1$ and~$g_2$ may tame~$g_H$, satisfy \eqref{IIzero} and have the same volume density, but their orthogonal complements of $H$ may be different, see \cite[Example~4.6]{GrTh14a} and \cite[Example~4.2]{CGJK15}.
\item \emph{Geometric interpretation of \eqref{IIzero}}: From \cite{GoGr15}, the condition \eqref{IIzero} holds if and only if the Riemannian and the sub-Riemannian geodesic flow commute. See also Section~\ref{sec:FoliationCounter} for more relations to geometry and explanation of the notation~$\II$ for the tensor in \eqref{II}.
\item If we define $\nabla$ as in \eqref{MaxRicCurv} and assume $\bar{\calR} =0$, then its adjoint $\hat \nabla$ equals the connection $\nabla^\ve$ in \cite{Bau17} with $\ve = \frac{1}{2}$.
\end{enumerate}
\end{remark}

\begin{proof}
Let $\nabla^g$ be the Levi-Civita connection of $g$. Define the connection $\nabla^0$ as in \eqref{Exists} which is compatible with both $g_H^*$ and the Riemannian metric $g$. Let $T$ be the torsion of $\nabla^0$. Define $\calR$ and $\bar{\calR}$ as in \eqref{CurvCocurv}. Write $T_Z$ for the vector valued form $T_Z(A) = T(Z,A)$ and use similar notation for $\calR$, $\bar{\calR}$ and $\II$. By the definition of the Levi-Civita connection, we have
$$T_Z = - \calR_Z + \frac{1}{2} \calR_Z^* - \bar{\calR}_Z + \frac{1}{2} \bar{\calR}^*_Z + \II_Z^* - \II^*_{\blankdown} Z - \frac{1}{2} \calR^*_{\blankdown}Z - \frac{1}{2} \bar{\calR}_{\blankdown}^* Z,$$
with dual
$$T_Z^* = - \calR_Z^* + \frac{1}{2} \calR_Z - \bar{\calR}_Z^* + \frac{1}{2} \bar{\calR}_Z + \II_Z - \II^*_{\blankdown} Z + \frac{1}{2} \calR^*_{\blankdown}Z + \frac{1}{2} \bar{\calR}_{\blankdown}^* Z,$$
Hence, if we introduce $T^s_Z := \frac{1}{2} (T_Z + T^*_Z)$ then
$$2 T_Z^s = - \frac{1}{2} (\calR_Z + \calR^*_Z) - \frac{1}{2} (\bar{\calR}_Z + \bar{\calR}_Z^*) + (\II^*_Z + \II_Z) - 2 \II^*_{\blankdown} Z.  $$

Let $\nabla^\prime$ be a connection compatible with $g_H$. Define an $\End T^*M$-valued one-from $\kappa$ such that $\nabla_v^\prime = \nabla^\kappa_v = \nabla^0_v + \kappa(v)$, and let $\hat{\nabla}^\prime_v = \hat{\nabla}^0_v + \hat{\kappa}(v)$ be its adjoint. Define
$$\hat{\kappa}^s(Z) = \frac{1}{2} \left(\hat{\kappa}(Z) + \hat{\kappa}(Z)^*\right), \quad \hat{\kappa}^a(Z) = \frac{1}{2} \left(\hat{\kappa}(Z) - \hat{\kappa}(Z)^*\right).$$
In order for the adjoint to be compatible with $g$, we must have
$$(\hat{\nabla}^\kappa_Z g)(A,A) = 2 \langle (T_Z + \hat{\kappa}(Z)^\transpose) A, A \rangle_{g}=0,$$
giving us the requirement $\hat{\kappa}^s(Z)^\transpose = - T^s_Z$. However, since $\nabla^\kappa$ is compatible with $g_H$, we also have $\hat{\kappa}(\shh \alpha) \alpha = 0$ by Lemma~\ref{lemma:Adjoiint}. The latter condition is equivalent to $\hat{\kappa}(A)^{\transpose *} (A + B) = 0$ for any $A \in \Gamma(H)$ and $B \in \Gamma(V)$. This means that
\begin{align*} 
0 & = \langle \hat{\kappa}(A)^{\transpose *} (A + B), A + B \rangle_{g} = \langle \hat{\kappa}^s(A)^{\transpose} (A + B), A + B \rangle_{g} \\
& =- \langle T^s_A (A + B), A + B \rangle_{g}  = - \langle  \II(A ,A),  B \rangle_{g}  + \langle A, \II( B, B) \rangle_{g} .
\end{align*}
This holds for any $A \in \Gamma(H)$ and $B \in \Gamma(V)$ if and only if $\II =0$. It follows that $4\hat{\kappa}^s(Z)^\transpose = \calR_Z + \calR^*_Z + \bar{\calR}_Z + \bar{\calR}^*_Z$.

For the anti-symmetric part,
\begin{align*} 0 & = - 4 \hat{\kappa}(A)^{\transpose*}(A +B) = 4\hat{\kappa}^a(A)^\transpose (A+ B) - 4\hat{\kappa}^s(A)^\transpose(A +B)  \\
& =4\hat{\kappa}^a(A)^\transpose (A+ B) - \calR^*_A  B 
\end{align*}
for any $A \in \Gamma(H)$, $B \in \Gamma(V)$. This relation and anti-symmetry gives us
$$\hat{\kappa}^a(Z)^\transpose (A + B) = \hat{\kappa}^a(\pr_{V} Z)(A + B) -\frac{1}{4} ( \calR_Z- \calR_Z^*)(A+B) + \sharp \iota_{Z \wedge A} \beta,$$
where $\beta$ is a three-form vanishing on $V$.

In conclusion, for any $Z_1, Z_2 \in \Gamma(TM)$,
\begin{align*}  \nabla^\kappa_{Z_1} Z_2 &= \nabla_{Z_1}^0 Z_2 - \hat{\kappa}(Z_2)^\transpose (Z_1)  \\
& = \nabla_{Z_1}^0 Z_2 - \frac{1}{4} (  2 \calR^*_{Z_2} + \bar{\calR}_{Z_2} + \bar{\calR}^*_{Z_2}) Z_1 + \hat{\kappa}^a(\pr_{V} Z_2)(Z_1) + \sharp \iota_{Z_1 \wedge Z_2} \beta.
\end{align*}
Furthermore, since
\begin{align*} \nabla^0_Z & = \nabla^g_Z + \frac{1}{2} T_Z - \frac{1}{2}T_Z^* - \frac{1}{2} T^* Z \\
&= \nabla^g_Z + \frac{1}{2} \left(- \calR_Z + \frac{1}{2} \calR_Z^* - \bar{\calR}_Z + \frac{1}{2} \bar{\calR}^*_Z   - \frac{1}{2} \calR^*_{\blankdown}Z - \frac{1}{2} \bar{\calR}_{\blankdown}^* Z \right) \\
& \quad - \frac{1}{2}\left( - \calR_Z^* + \frac{1}{2} \calR_Z - \bar{\calR}_Z^* + \frac{1}{2} \bar{\calR}_Z  + \frac{1}{2} \calR^*_{\blankdown}Z + \frac{1}{2} \bar{\calR}_{\blankdown}^* Z\right) \\
&\quad  - \frac{1}{2} \left( - \calR_{\blankdown}^*Z - \frac{1}{2} \calR_Z - \bar{\calR}_{\blankdown}^*Z - \frac{1}{2} \bar{\calR}_Z  + \frac{1}{2} \calR^*_Z + \frac{1}{2} \bar{\calR}_Z^*\right) \\
& = \nabla^g_Z + \frac{1}{2} \left(-  \calR_Z + \calR_Z^* -  \bar{\calR}_Z + \bar{\calR}^*_Z   \right) ,
\end{align*}
we get
$$\nabla_{Z}^\kappa  = \nabla_Z^{g} + \frac{1}{2} \left(- \calR_Z + \calR_Z^* - \bar{\calR}_Z + \bar{\calR}_Z^* - \calR^* Z_1 - \bar{\calR}^* Z_1 \right)Z_2 + \lambda(Z_2) Z_1 + \shh \iota_{Z_1 \wedge Z_2} \beta$$
where $\lambda(Z) A = \frac{1}{4} (\bar{\calR}_Z - \bar{\calR}_Z^*) A- \hat{\kappa}^a(\pr_{V} Z)A$. 

In conclusion, if $\nabla^\prime$ and $\hat \nabla^\prime$ are compatible with $g_H^*$ and $g$ respectively, and $\nabla$ is defined as in \eqref{MaxRicCurv}, then $\II = 0$ and
\begin{equation} \label{GoodConnection} \nabla_{Z_1}^\prime Z_2 = \nabla^{\lambda,\beta}_{Z_1} Z_2 :=\nabla_{Z_1} Z_2 + \lambda(Z_2)Z_1 + \shh \iota_{Z_1 \wedge Z_2}\beta, \end{equation}
for some three-form $\beta$ vanishing on $V$ and some $\End TM$-valued one-form $\lambda$ vanishing on $H$ and satisfying $\lambda(v)^* = -\lambda(v)$, $v \in TM$. It is simple to verify that $\tr T^{\nabla^{\lambda,\beta}}(v, \, \cdot \,) = 0$ for any $v \in H$, and hence $L(\nabla^\prime) f = L(\hat \nabla^\prime) f = \Delta_H f$ by Lemma~\ref{lemma:DualSL}.

All that remains to be proven is that
$$\langle \alpha , \Ric( \nabla^{\lambda,\beta} ) \alpha \rangle_{g_H^*} \leq \langle \alpha , \Ric(\nabla) \alpha \rangle_{g_H^*}.$$
If $\nabla^\beta = \nabla^{0,\beta}$ then $\hat L^\beta : = L(\hat{\nabla}^{\beta})= L(\hat{\nabla}^{\lambda,\beta})$ since $\lambda$ vanishes on $H$. If we define $\hat{L} = L(\hat{\nabla})$, then for any smooth function~$f$ and local orthonormal basis $A_1, \dots, A_n$ of $H$,
\begin{align*}
& \hat L^\beta df(Z)  = \hat{L} df(Z)  + 2 \sum_{i=1}^n \hat{\nabla}_{A_i} df(\sharp \iota_{A_i \wedge Z}\beta) \\ & \qquad \qquad \qquad+ \sum_{i=1}^n df (\sharp \iota_{A_i \wedge Z} (\hat{\nabla}_{A_i}  \beta) ) + \sum_{i=1}^n df(\sharp_{A_i \wedge \sharp \iota_{A_i \wedge Z} \beta} \beta) \\
& = \hat{L}df(Z)  + \sum_{i=1}^n  df(T^{\nabla}(A_i ,\sharp \iota_{A_i \wedge Z}\beta)) + \sum_{i=1}^n (\hat{\nabla}_{A_i} \beta) (\sharp df ,A_i, Z) - 2 \langle \iota_{\sharp df} \beta, \iota_Z \beta \rangle_{\wedge^2 g_H^*} \\
& = \hat{L}df (Z)  + 2 \langle \iota_{\calR} df, \iota_{Z} \beta \rangle_{\wedge^2 g_H^*} - \tr_H (\hat{\nabla}_{\times} \beta) (\times, \sharp df , Z) - 2 \langle \iota_{\sharp df} \beta, \iota_Z \beta \rangle_{\wedge^2 g_H^*} .
\end{align*}
We use that
$$\langle (\hat L^\beta - \hat{L}) df, \alpha \rangle_{g} = \langle (\Ric(\nabla^\beta) -\Ric(\nabla)) df, \alpha \rangle_{g} = \langle (\Ric(\nabla^{\lambda,\beta}) -\Ric(\nabla)) df, \alpha \rangle_{g}.$$
As a consequence, for any $\alpha \in T^*M$,
\begin{align*}
& \langle \alpha, \Ric(\nabla^{\lambda, \beta}) \alpha \rangle_{g^*}  = \langle \alpha, \Ric(\nabla) \alpha\rangle_{g^*} +  2 \langle \iota_{\calR} \alpha, \iota_{\sharp \alpha} \beta \rangle_{\wedge^2 g_H^*} - 2 \langle \iota_{\sharp \alpha} \beta, \iota_{\sharp \alpha} \beta \rangle_{\wedge^2 g_H^*}. \end{align*}
Denoting $\alpha_H = \pr_{H}^* \alpha$, we get
\begin{align*}
 \langle \alpha, \Ric(\nabla^{\lambda, \beta}) \alpha \rangle_{g_H^*} & = \langle \alpha_H, \Ric(\nabla) \alpha_H \rangle_{g^*} - 2 | \iota_{\sharp \alpha_H} \beta|_{\wedge^2 g_H^*}^2. \end{align*}
The result follows.
\end{proof}

\subsection{Infinite lifetime of the sub-Riemannian Laplacian} \label{sec:InfiniteLifetime}
Assume now that the taming $g$ is a complete Riemannian metric. Then both the sub-Laplacian $\Delta_{H}$ of $\mu = \mu_g$ and the Laplacian $\Delta_{g}$ are essentially self-adjoint on compactly supported functions. We denote their unique self-adjoint extension by the same symbol.

Let $\nabla$ be a connection compatible with $g_H^*$ and let $X_t(\blank)$ be the stochastic flow of~$\frac{1}{2} L(\nabla)$ with explosion time $\tau(\blank)$. For any $x \in M$, let $\ptr_t = \ptr_t(x) : T_x M \to T_{X_t(x)} M$ be parallel transport along $X_t(x)$ with respect to $\nabla$. Using arguments similar to \cite[Section~2.5]{GrTh14a}, we know that the anti-development $W_t(x)$ at $x$ determined by
$$dW_t(x) = \ptr_t^{-1} \circ dX_t(x), \quad W_t(0) = 0 \in T_x M,$$
is a Brownian motion in the inner product space $(H_x, \langle \blank, \blank \rangle_{g_H(x)})$ with lifetime $\tau(x)$. Consider the semigroup $P_t$ on bounded Borel measurable functions corresponding to~$X_t(\blank)$
$$P_tf(x) = \mathbb{E}[1_{t < \tau(x)}f(X_t(x))].$$
We want to make statements regarding the explosion time $\tau(\blank)$ using connections that are compatible with $g_H^*$. Let $C^\infty_b(M)$ denote the space of smooth bounded functions. For vector bundle endomorphism $\mathscr{A}$ of $T^*M$ write $\mathscr{A}_{\ptr_t}(x) = \ptr^{-1}_t \mathscr{A}(X_t(x)) \ptr_t$ and let $\hptr_t$ denote the parallel transport along $X_t$ with respect to~$\hat \nabla$.

We make the following two assumptions
\begin{enumerate}[\rm (A)] 
\item \label{item:A} If $\II$ is defined as in \eqref{II}, then $\II = 0$.
\item \label{item:B} Consider the two-form $\calC\in \Gamma(\bigwedge^2 T^*M)$ defined by 
\begin{equation}
\label{calC} \calC(v,w) = \tr \bar{\calR}(v, \calR(w, \blank)) - \tr \bar{\calR}(w, \calR(v, \blank)), \quad v,w \in TM.
\end{equation}
Suppose that $\delta \calC = 0$ where $\delta$ is the codifferential with respect to $g$.
\item \label{item:C} Let $\nabla$ be defined as in \eqref{MaxRicCurv}. Assume that here exists a constant $K \geq 0$ such that for $\Ric = \Ric(\nabla)$,
$$\langle \Ric \alpha, \alpha \rangle_{g^*} \geq - K | \alpha|^2_{g^*} .$$
\end{enumerate}

\begin{theorem} \label{th:main}
Assume that \eqref{item:A}, \eqref{item:B} and \eqref{item:C} hold. Then we have the following results.
\begin{enumerate}[\rm (a)]
\item $\Delta_{g}$ and $\Delta_{H}$ spectrally commute. 
\item $\tau(x) = \infty$ a.s. for any $x \in M$.
\item Define
$\hat{Q}_t = \hat{Q}_t(x) \in \End T_x^*M$ as solution to the ordinary differential equation
$$\frac{d}{dt} \hat{Q}_t = - \frac{1}{2} \hat{Q}_t  \Ric_{\hptr_t} \,, \quad \hat{Q}_0 = \id.$$
Then, for any $f \in C^\infty_b(M)$ with $\| df\|_{L^\infty(g^*)} < \infty$, we have
$$dP_t f(x) = \mathbb{E}[ \hat{Q}_t \hptr_t^{-1} df(X_t(x))]$$
and $$\| dP_t f\|_{L^\infty(g^*)} \leq e^{Kt} \| d f\|_{L^\infty(g^*)}.$$ In particular,
$$\sup_{t \in [0, t_1]} \| dP_t f\|_{L^\infty(g^*)} \leq e^{Kt_1} \| df \|_{L^\infty(g^*)} < \infty$$ whenever $\| df\|_{L^\infty(g^*)} < \infty$.
\end{enumerate}
\end{theorem}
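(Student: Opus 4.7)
\emph{Plan.} I would tackle the three parts in the order (c), (b), (a): the Feynman--Kac identity does the main work, (b) is harvested from the gradient bound it yields, and (a) is a separate algebraic statement.

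\textbf{Setup and local martingale.} By Lemma~\ref{lemma:DualSL}(b) together with Lemma~\ref{lemma:Adjoiint}(b) and $\II=0$ (from (A) via Proposition~\ref{prop:ChoiceCon}), both $L(\nabla)$ and $L(\hat\nabla)$ restrict to $\Delta_H$ on functions, so $X_t$ has generator $\tfrac12\Delta_H$, and $u(s,x):=P_s f(x)$ is smooth by hypoellipticity. Proposition~\ref{prop:Weitz} applied to $\hat\nabla$ gives
\[(L(\hat\nabla)-\Ric(\nabla))\,du(r,\blank)=d\Delta_H u(r,\blank)=2\,d\partial_r u(r,\blank).\]
I would apply It\^o's formula to $N_s=\hat Q_s\,\hptr_s^{-1}\,du(t-s,X_s)$ on $[0,t\wedge\tau_k]$, with $\tau_k$ exhausting $\tau$ via first exits from relatively compact sets. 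The quadratic variation of $\hptr_s^{-1}du(t-s,X_s)$ produces the term $\tfrac12L(\hat\nabla)\,du$; this combines with the time derivative $-\partial_r u$ and with $-\tfrac12\hat Q_s\Ric\,du$ coming from the ODE defining $\hat Q_s$, so by Weitzenb\"ock all drift terms cancel and $N_s$ is a local martingale.

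\textbf{Representation and $L^\infty$ bound (c).} Gr\"onwall applied to the operator ODE under (C) yields $|\hat Q_s|_{\mathrm{op}}\leq e^{Ks}$. Taking $\mathbb{E}$ at $s=0$ and $s=t\wedge\tau_k$ and letting $k\to\infty$ via the non-self-adjoint Feynman--Kac machinery of Appendix~\ref{sec:FK} produces
\[dP_tf(x)=\mathbb{E}\bigl[\hat Q_t\,\hptr_t^{-1}\,df(X_t(x))\,1_{\{t<\tau\}}\bigr]\]
together with $\|dP_tf\|_{L^\infty(g^*)}\leq e^{Kt}\|df\|_{L^\infty(g^*)}$ on the minimal semigroup. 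The main obstacle is exactly this step: one must justify the passage $k\to\infty$ without presupposing $\tau=\infty$, which is precisely what Appendix~\ref{sec:FK} is designed for, handling also the fact that $\hat\nabla$ does not preserve $H$ (Lemma~\ref{lemma:Adjoiint}(a)).

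\textbf{Infinite lifetime (b) and spectral commutativity (a).} Using completeness of $g$, pick $\phi_n\in C_c^\infty(M)$ with $0\leq\phi_n\uparrow 1$ and $\|d\phi_n\|_{L^\infty(g^*)}\to 0$, obtained as smooth cutoffs of the Riemannian distance. The gradient bound forces $\|dP_t\phi_n\|_{L^\infty(g^*)}\to 0$, so $P_t\phi_n$ converges to a spatial constant, and monotone convergence identifies it as $P_t 1=1$, giving $\tau=\infty$ a.s.\ and removing the indicator from the representation. For (a), $\Delta_g$ and $\Delta_H$ are essentially self-adjoint on $C_c^\infty(M)$ by Strichartz; expanding $[\Delta_g,\Delta_H]$ in a local adapted frame $(A_i)\cup(Z_s)$ for $H\oplus V$, assumption (A) kills the would-be second-order obstruction, and the surviving first-order term should be a multiple of $\sharp\,\delta\calC$, annihilated by (B). A Nelson-type commutation criterion on the common invariant core $C_c^\infty(M)$ then upgrades this to spectral commutativity.
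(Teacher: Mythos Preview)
Your plan reverses the logical dependencies in a way that leaves a real gap. In the paper, part (a) is proved \emph{first} (Proposition~\ref{prop:Commute}), and spectral commutativity of $\Delta_g$ and $\Delta_H$ is then used as the key input in the proof of (c). Concretely, Appendix~\ref{sec:FK} does give you a strongly continuous semigroup $P_t^{(1)}$ on $L^2$ one-forms with the Feynman--Kac representation
\[
P_t^{(1)}\alpha(x)=\mathbb{E}\bigl[1_{t<\tau}\,\hat Q_t\,\hptr_t^{-1}\alpha(X_t(x))\bigr],
\]
but it does \emph{not} tell you that $P_t^{(1)}df=dP_tf$. That identification is a separate step: one must show $dP_tf\in\Dom(\hat L-\Ric^s)$ so that uniqueness of strongly continuous semigroup solutions applies. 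The paper does this by using (a): since $\Delta_g$ and $\Delta_H$ spectrally commute, $\Delta_g P_tf=P_t\Delta_g f$ for $f\in C_c^\infty$, hence $\|dP_tf\|_{L^2}^2=-\langle P_t\Delta_g f,P_tf\rangle<\infty$, and a quadratic-form argument (approximating $P_tf$ simultaneously in the $\Delta_g$- and $\Delta_H$-graph norms) places $dP_tf$ in the required domain. Your sketch skips exactly this bridge.

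Your backward-martingale route $N_s=\hat Q_s\hptr_s^{-1}dP_{t-s}f(X_s)$ does not avoid the issue: to pass from local to true martingale you need control of $N_{\tau_k}$ on $\{\tau_k\le t\}$, i.e.\ an a priori bound on $\sup_{s\le t}\|dP_sf\|_{L^\infty}$, which is precisely the conclusion of (c). So without (a) the argument is circular. The correct order is (a) $\Rightarrow$ (c) $\Rightarrow$ (b).

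On (a) itself: the paper does not invoke a Nelson-type criterion. It first shows $\Delta_g\Delta_H=\Delta_H\Delta_g$ on $C_c^\infty$ via the Weitzenb\"ock identity $d\Delta_Hf=(\hat L-\Ric)df$, self-adjointness of $\hat L$ (Lemma~\ref{lemma:DualHatSL}), and Lemma~\ref{lemma:AS} to reduce the obstruction to $\langle\delta\calC,\,\cdot\,\rangle$. It then proves $\Dom(\Delta_g)\subseteq\Dom(\Delta_H)$ from a Bochner-type inequality, and upgrades to spectral commutativity using $L^2$-uniqueness of heat equation solutions (Lemma~\ref{lemma:Lpunique}) rather than Nelson's theorem. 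Your frame-computation sketch is in the right spirit but misses that the essential-self-adjointness core and the domain inclusion are what actually close the argument.
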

Remark that since $\nabla$ preserves $H$ under parallel transport, and hence also $\Ann(H)$, we have $\Ric \alpha = 0$ for any $\alpha \in \Ann(H)$. For this reason it is not possible to have a positive lower bound of $\langle \Ric \alpha, \alpha \rangle_{g^*}$ unless $H = TM$. The results of Theorem~\ref{th:main} appear as necessary conditions for the $\Gamma_2$-calculus on sub-Riemannian manifolds, see e.g. \cite{BaGa17,BKW16,GrTh14b} . We will use the remainder of this section to prove this statement.

\subsection{Anti-symmetric part of Ricci curvature}
Let $\zeta$ and $\nabla$ be as in \eqref{TorsionForm} and \eqref{MaxRicCurv}. The operator $\Ric(\nabla)$ is not in general symmetric. We consider the anti-symmetric part. Define $\Ric = \Ric(\nabla)$ and
\begin{equation} \label{RicAS} \Ric^s = \frac{1}{2} \left( \Ric + \Ric^*\right),\quad \Ric^a = \frac{1}{2} \left( \Ric - \Ric^*\right).\end{equation}

\begin{lemma} \label{lemma:AS}
For any $\alpha, \beta \in T^*M$,
$$2 \langle \Ric^a \alpha, \beta \rangle_{g^*} =  \tr_H (\nabla_{\times} \zeta)( \times, \sharp \alpha, \sharp \beta).$$
In particular,
$$ \langle \beta,  \Ric^a \alpha \rangle_{g^*} =\langle \pr_{V}^*\beta , \Ric^s \alpha \rangle_{g^*} - \langle \beta , \Ric^s \pr_{V}^* \alpha \rangle_{g^*},$$
so if $\Ric^s$ has a lower bound then $\Ric^a$ is a bounded operator.
Furthermore, if we define $\calC$ by \eqref{calC}, then whenever the $L^2$ inner product is finite
$$\langle \Ric^a df, d\phi \rangle_{L^2(g^*)} = \langle \calC, df \wedge d\phi \rangle_{L^2(g^*)} \quad \text{ for any $f,\phi \in C^\infty(M)$.}$$
\end{lemma}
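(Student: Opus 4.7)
The key observation, extracted from the proof of Proposition~\ref{prop:ChoiceCon}, is that the connection $\nabla$ in \eqref{MaxRicCurv} satisfies $\nabla-\nabla^g=S$ with $\langle S_A B,C\rangle_g=-\tfrac12\zeta(A,B,C)$, antisymmetric in $(B,C)$ since $\zeta$ is a $3$-form. Hence $\nabla g=0$, so $\nabla$ is itself a $g$-metric connection with totally skew torsion $\langle T^\nabla(U,V),W\rangle_g=-\zeta(U,V,W)$. This underpins all three assertions.

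For the pointwise identity (i), I would expand
\[
2\langle\Ric^a\flat X,\flat Y\rangle_{g^*}=\sum_{i=1}^n\langle R^\nabla(A_i,X)A_i,Y\rangle_g-\sum_{i=1}^n\langle R^\nabla(A_i,Y)A_i,X\rangle_g
\]
and apply the first Bianchi identity (with torsion) to the triple $(A_i,Y,X)$, contract with $A_i$, and sum. The cross term $\sum_i\langle R^\nabla(Y,X)A_i,A_i\rangle_g$ drops since $R^\nabla(Y,X)$ preserves $H$ and is $g_H$-skew there, while metric compatibility identifies the surviving LHS as $2\Ric^a(X,Y)$. On the Bianchi right-hand side, $\langle(\nabla_U T^\nabla)(V,W),A_i\rangle_g=-(\nabla_U\zeta)(V,W,A_i)$; the two contributions with $A_i$ appearing twice inside $\zeta$ vanish by antisymmetry, and what remains is $\tr_H(\nabla_\times\zeta)(\times,X,Y)$ after using the cyclic invariance of $3$-forms. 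Expanding $T^\nabla(A_i,V)=-\sum_k\zeta(A_i,V,e_k)e_k$ in an orthonormal frame then shows that the quadratic torsion contributions reduce to sums that cancel identically by the total skew-symmetry of $\zeta$.

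For part (ii), I would combine two structural facts. Since $\nabla$ preserves $H$, $R^\nabla(\cdot,\cdot)A_i\in H$, so $\Ric(\nabla)\alpha=0$ for every $\alpha\in\Ann(H)=\pr_V^* T^*M$, and dually $\Ric(\nabla)^*\beta\in\Ann(V)=\pr_H^* T^*M$. Next, inspecting \eqref{TorsionForm} shows $\zeta|_{H^{\otimes 3}}=0$ (as $\calR$ takes values in $V$ and $\bar{\calR}(A,B)=0$ for $A,B\in H$), and because $\nabla$ preserves $H$ the same holds for $(\nabla_U\zeta)|_{H^{\otimes 3}}$; by part~(i) this forces $\Ric^a|_{\pr_H^* T^*M\times\pr_H^* T^*M}=0$. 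Decomposing $\alpha,\beta$ in the $g^*$-orthogonal splitting $T^*M=\pr_H^* T^*M\oplus\pr_V^* T^*M$ and expanding $\langle\beta,\Ric^a\alpha\rangle_{g^*}$ using these three ingredients is a short direct computation yielding the stated identity. Equivalently, $\Ric^a=\pr_V^*\Ric^s-\Ric^s\pr_V^*$ as operators on $T^*M$; the block form of $\Ric^s$, with its $V$-$V$ block vanishing, turns a lower bound on the self-adjoint $\Ric^s$ into a control of its off-diagonal $H$-$V$ block, hence of $\Ric^a$.

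For part (iii), I would integrate the formula from (i) and expand by Leibniz:
\[
(\nabla_{A_i}\zeta)(A_i,\sharp df,\sharp d\phi) = A_i[\zeta(A_i,\sharp df,\sharp d\phi)]-\zeta(\nabla_{A_i}A_i,\sharp df,\sharp d\phi)-\zeta(A_i,\nabla_{A_i}\sharp df,\sharp d\phi)-\zeta(A_i,\sharp df,\nabla_{A_i}\sharp d\phi).
\]
Since $\nabla$ preserves $\mu$, the first two terms assemble intrinsically into a $g$-divergence, which integrates to zero under the $L^2$-finiteness assumption. The Hessian-type pieces split into a part symmetric under $f\leftrightarrow\phi$ (which cancels against itself) and an antisymmetric remainder; substituting \eqref{TorsionForm} for $\zeta$ and using the case analysis for how its arguments distribute between $H$ and $V$, this remainder collapses precisely to $\tr\bar{\calR}(\sharp df,\calR(\sharp d\phi,\blank))-\tr\bar{\calR}(\sharp d\phi,\calR(\sharp df,\blank))=\calC(\sharp df,\sharp d\phi)$, matching \eqref{calC}. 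The main obstacle I expect is this final bookkeeping step: correctly identifying which combinations of $\zeta$-terms vanish (as total divergences, $(f,\phi)$-symmetrizations, or through $\zeta|_{H^{\otimes 3}}=0$) and which recombine into the $\bar{\calR}\circ\calR$ pattern defining $\calC$.
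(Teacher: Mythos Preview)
Your treatment of parts~(i) and~(ii) is essentially the same as the paper's (indeed, the paper leaves~(ii) implicit, so your block decomposition via $\Ric|_{\Ann(H)}=0$, $\Ric^*\!\subset\pr_H^*T^*M$, and $\zeta|_{H^{\otimes 3}}=0$ is a welcome clarification).

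For part~(iii), however, the paper takes a different and cleaner route, and your Leibniz approach has a gap. The paper first splits $\zeta=\zeta_H+\zeta_V$ and observes that $\tr_H(\nabla_\times\zeta_V)(\times,\sharp\alpha,\sharp\beta)=0$, reducing to~$\zeta_H$; by the $H\leftrightarrow V$ symmetry it also notes $\tr_V(\nabla_\times\zeta_H)(\times,\sharp\alpha,\sharp\beta)=0$, so the horizontal trace is actually the \emph{full} $g$-trace. It then invokes the codifferential formula for a metric connection with torsion,
\[
\delta\eta=-\sum_i\iota_{A_i}\nabla_{A_i}\eta-\sum_s\iota_{Z_s}\nabla_{Z_s}\eta+\iota_T^*\eta,
\]
applied to $\eta=\zeta_H$, which packages everything into $\tr_H(\nabla_\times\zeta_H)(\times,\sharp\alpha,\sharp\beta)=\langle\calC-\delta\zeta_H,\alpha\wedge\beta\rangle_{g^*}$, with $(\iota_T^*\zeta_H)(\sharp\alpha,\sharp\beta)=\calC(\sharp\alpha,\sharp\beta)$ falling out directly. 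Integration then kills $\delta\zeta_H$ against the exact form $df\wedge d\phi=d(f\,d\phi)$.

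In your Leibniz expansion, the ``first two terms assemble into a $g$-divergence'' is not quite right: what you wrote is only the \emph{horizontal} trace $\tr_H\nabla\omega$ of the one-form $\omega=\zeta(\blank,\sharp df,\sharp d\phi)$, and this does not integrate to zero without the vertical contribution. You would need precisely the observation $\tr_V(\nabla_\times\zeta_H)(\times,\cdot,\cdot)=0$ (or its analogue) to upgrade to a genuine divergence, which you do not mention. Your description of the remaining ``Hessian-type pieces'' is also too vague; in the paper's formulation these are absorbed wholesale into the $\iota_T^*\zeta_H$ term, whose identification with~$\calC$ is a one-line trace computation rather than a case-by-case $H/V$ analysis. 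I would recommend adopting the codifferential formula: it avoids all the bookkeeping you flagged as the main obstacle.
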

The first part of this result is also found in \cite[Proposition C.6]{EJL99}. The condition $\Ric^a = 0$ is called \emph{the Yang-Mills condition} when $\bar{\calR} =0$. For more details, see Remark~\ref{re:geometric}.

\begin{proof}[Proof of Lemma~\ref{lemma:AS}]
Write $\circlearrowright$ for the cyclic sum. By the first Bianchi identity
\begin{align*}
& 2 \langle \Ric^a \alpha, \beta \rangle_{g^*} =  \sum_{i=1}^n \langle A_i, R^\nabla(A_i, \sharp \beta) \sharp \alpha - R^\nabla(A_i, \sharp \alpha) \sharp \beta \rangle_{g} \\
&= -  \sum_{i=1}^n \langle A_i, \circlearrowright R^\nabla(A_i , \sharp \alpha) \sharp \beta \rangle_{g} 
= - \sum_{i=1}^n \langle A_i ,\circlearrowright  (\nabla_{A_i} T)( \sharp \alpha, \sharp \beta) + \circlearrowright T( T(A_i, \sharp \alpha), \sharp \beta )  \rangle_{g} \\
&= \sum_{i=1}^n (\nabla_{A_i} \zeta)( A_i, \sharp \alpha, \sharp \beta)  - \sum_{i=1}^n \langle T(A_i, \sharp \alpha) , T(\sharp \beta, A_i) \rangle_{g}  - \sum_{i=1}^n \langle  T(\sharp \beta, A_i) , T( \sharp \alpha, A_i) \rangle_{g} \\
&= \tr_H (\nabla_{\times} \zeta)( \times, \sharp \alpha, \sharp \beta) . 
\end{align*}

Write $\zeta = \zeta_H + \zeta_V$, where $\zeta_H(v_1, v_2, v_3) = \circlearrowright \langle v_1, \calR(v_2, v_3) \rangle$ and $\zeta_V(v_1, v_2, v_3) = \circlearrowright \langle v_1, \bar{\calR}(v_2, v_3) \rangle$.   Since for $\alpha, \beta \in \Ann(H)$
$$2\langle \Ric^a \alpha, \beta \rangle_g = 0 = \tr_H (\nabla_\times \zeta)(\times, \sharp \alpha, \sharp \beta) =  \tr_H (\nabla_\times \zeta_V)(\times, \sharp \alpha, \sharp \beta),$$
we can write $\langle \Ric_a \alpha, \beta \rangle = \tr_H (\nabla_{\times} \zeta_H)( \times, \sharp \alpha, \sharp \beta)$. Observe also that by reversing the place of $V$ and $H$ and writing $g_V = g|V$, we have also $\tr_{g_V} (\nabla_\times \zeta_H)(\times, \sharp \alpha, \sharp \beta) =0$.

Continuing, if $A_1, \dots, A_n$ and $Z_1, \dots, Z_\nu$ are local orthonormal bases of $H$ and~$V$, respectively, observe that since $\nabla$ preserves the metric $g$, for any one-from $\eta$, we have
$$d\eta = \sum_{i=1}^n \flat A_i \wedge \nabla_{A_i} \eta + \sum_{i=1}^\nu \flat Z_\nu \wedge \nabla_{Z_\nu} \eta  + \iota_{T} \eta,$$
where $\iota_T \eta = \eta(T(\, \cdot \, , \, \cdot \,))$. The formula above becomes valid for arbitrary forms $\eta$ if we extend $\iota_T$ by rule $\iota_T (\alpha \wedge \beta) = (\iota_T \alpha) \wedge \beta + (-1)^k \alpha \wedge \iota_T \beta$ for any $k$-form $\alpha$ and form $\beta$. Observe that for any $v \in TM$, $\tr T(v, \, \cdot \,) = 0$. Using similar arguments to the proof of Lemma~\ref{lemma:DualSL}~(b), we obtain a local formula for the codifferential
$$\delta \eta=  - \sum_{i=1}^n \iota_{ A_i}\nabla_{A_i} \eta - \sum_{i=1}^\nu \iota_{Z_\nu}  \nabla_{Z_\nu} \eta  + \iota_{T}^* \eta.$$
This means that
\begin{align*} &  \tr_H (\nabla_{\times} \zeta_H)( \times, \sharp \alpha, \sharp \beta) = (\iota_T^* \zeta_H)(\sharp \alpha, \sharp \beta) - (\delta \zeta_H)( \sharp \alpha, \sharp \beta) \\
& = \langle \calC - \delta \zeta_H, \alpha \wedge \beta\rangle_{g^*}
\end{align*}
Inserting $\alpha \wedge \beta = df \wedge d\phi = d(f d\phi)$ and integrating over the manifold, we have the result.
\end{proof}

\subsection{Commutation relations with between the Laplacian and the sub-Laplacian}
Let $(M, H, g_H)$ be a sub-Riemannian manifold and let $g$ be a taming Riemannian metric with $\II = 0$. Define $\Delta_g$ as the Laplacian of $g$ and let $\Delta_H$ be defined relative to the volume form of $g$.
\begin{proposition} \label{prop:Commute} 
Define $\calC$ as in \eqref{calC}.
\begin{enumerate}[(a)]
\item We have $\Delta_{g} \Delta_{H} f = \Delta_{H} \Delta_{g} f$, for all $f\in C^\infty(M)$ if and only if $\delta \calC = 0$.
\item Assume $\delta \calC = 0$ and that $\Ric(\nabla)$ is bounded from below by a constant $-K$. Then $\Delta_{g}$ and $\Delta_{H}$ spectrally commute.
\end{enumerate}
\end{proposition}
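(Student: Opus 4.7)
My plan for part~(a) starts by reducing to a commutator of two rough Laplacians built from a single connection. Since $\II = 0$, Proposition~\ref{prop:ChoiceCon} supplies a connection $\nabla$ with $\nabla g = 0$ and $\nabla g_H^* = 0$; in particular $\nabla$ preserves $H$ and its $g$-orthogonal complement $V$, so that
$$\Delta_H f = L(\nabla) f, \qquad \Delta_g f = L(\nabla) f + L_V(\nabla) f, \qquad L_V(\nabla) := \tr_V \nabla^2,$$
and hence $[\Delta_g, \Delta_H] = [L_V(\nabla), L(\nabla)]$.

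For the commutator itself, a principal-symbol reduction does most of the work. Under $\II = 0$ the Hamiltonian symbols $g^*$ and $g_H^*$ Poisson-commute on $T^*M$ (the geometric content of $\II = 0$ recalled in item~(iii) of the remark after Proposition~\ref{prop:ChoiceCon}), so the third-order principal symbol of $[\Delta_g, \Delta_H]$ vanishes; anti-self-adjointness of the commutator of two self-adjoint operators, together with the fact that a real second-order anti-self-adjoint differential operator must have zero principal symbol, then forces $[\Delta_g, \Delta_H]$ to be of order at most one, necessarily of the form $Xf + \tfrac12(\dv_\mu X) f$ for a unique vector field~$X$. It remains to identify $X = c\,\sharp\delta\calC$ for a nonzero constant~$c$, which I would do by applying $d$ to the commutator, invoking Proposition~\ref{prop:Weitz} together with its verbatim vertical-trace analog,
$$d\Delta_H f = L(\hat\nabla) df - \Ric(\nabla) df, \qquad d\Delta_g f = L(\hat\nabla) df + L_V(\hat\nabla) df - \Ric(\nabla) df - \Ric^V(\nabla) df,$$
with $\Ric^V(\nabla)(\alpha)(v) := \tr_V R^\nabla(\times, v)\alpha(\times)$, and pairing with a test one-form $d\phi$. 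The second-order-in-$f$ output is annihilated by the symbol argument already made, and the symmetric part $\Ric^s(\nabla)$ contributes zero to the anti-self-adjoint commutator, leaving only the anti-symmetric part $\Ric^a(\nabla)$. Lemma~\ref{lemma:AS} then converts this surviving contribution into $\int\langle \calC, df \wedge d\phi\rangle_{g^*} d\mu = \int f\,\langle\delta\calC, d\phi\rangle_{g^*} d\mu$, and varying $f, \phi \in C_c^\infty(M)$ delivers the equivalence with $\delta\calC = 0$.

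For part~(b), completeness of $g$ yields essential self-adjointness of $\Delta_g$ and $\Delta_H$ on $C_c^\infty(M)$ via \cite[Theorem~2.4]{Str83} (cited in Section~\ref{sec:sR}), and part~(a) gives pointwise commutativity on all of $C^\infty(M)$. To upgrade to spectral commutativity, I would show commutation with the heat semigroup: for $f \in C_c^\infty(M)$ set $u(t) := \Delta_g P_t^H f - P_t^H \Delta_g f$, where $P_t^H$ is the semigroup of $\tfrac12\Delta_H$. Hypoelliptic smoothing places $P_t^H f$ in $C^\infty(M)$, the gradient estimate of Theorem~\ref{th:main}(c) gives enough control to treat $u(t)$ in $L^2$, and part~(a) applied pointwise to $P_t^H f$ gives $\partial_t u = \tfrac12 \Delta_H u$ with $u(0) = 0$. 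Uniqueness of bounded solutions of the $\tfrac12\Delta_H$-heat equation (ensured by the infinite-lifetime conclusion of Theorem~\ref{th:main}(b)) forces $u \equiv 0$, so $\Delta_g$ commutes with each $P_t^H$; a symmetric argument (or functional calculus) then gives $P_t^g P_s^H = P_s^H P_t^g$, which is spectral commutativity. The main obstacle is the bookkeeping in part~(a): confirming that after the Weitzenb\"ock cascade only $\Ric^a(\nabla)$ survives as the first-order output, rather than an additional Ricci-times-torsion cross-term, requires careful use of the first Bianchi identity for the torsion-bearing connection~$\nabla$ together with the explicit formulas for $\calR$, $\bar\calR$ and $\zeta$ supplied by Proposition~\ref{prop:ChoiceCon}.
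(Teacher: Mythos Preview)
Your approach to part~(a) would likely work but is considerably more elaborate than needed. The paper bypasses the principal-symbol reduction entirely: since $\Delta_g$ and $\Delta_H$ are both symmetric, pointwise commutativity on $C^\infty$ is equivalent to $\Delta_g\Delta_H$ being symmetric on $C_c^\infty$. One then computes directly
\[
\langle \Delta_g\Delta_H f,\phi\rangle_{L^2} = -\langle d\Delta_H f, d\phi\rangle_{L^2(g^*)} = -\langle (\hat L-\Ric)\,df, d\phi\rangle_{L^2(g^*)},
\]
using only the single Weitzenb\"ock identity $d\Delta_H f = (\hat L-\Ric)\,df$. The crucial input is that $\hat L = L(\hat\nabla)$ is \emph{itself} symmetric on one-forms (Lemma~\ref{lemma:DualHatSL}); moving $\hat L$ across and adding and subtracting $\Ric^*$ immediately isolates the defect as $2\langle \Ric^a df, d\phi\rangle_{L^2(g^*)}$, after which Lemma~\ref{lemma:AS} finishes. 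There is no need to introduce a vertical rough Laplacian, a vertical Weitzenb\"ock formula, or any order-by-order symbol argument.

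Part~(b), however, has a genuine circularity: you invoke Theorem~\ref{th:main}(b) and~(c) for infinite lifetime and the gradient bound, but in the paper Proposition~\ref{prop:Commute} is a \emph{lemma used in the proof of Theorem~\ref{th:main}} (see Section~\ref{sec:Lifetime}, first paragraph). You therefore cannot appeal to Theorem~\ref{th:main} here. The paper's self-contained argument replaces your $L^\infty$-uniqueness by the $L^2$-uniqueness of Lemma~\ref{lemma:Lpunique}, which requires no stochastic completeness. The missing analytic step is the domain inclusion $\Dom(\Delta_g)\subseteq\Dom(\Delta_H)$: this is obtained from the quantitative inequality
\[
\|\Delta_H f\|_{L^2}^2 \leq n\sqrt{\|\Delta_g f\|_{L^2}\|\Delta_H f\|_{L^2}}\Bigl(\sqrt{\|\Delta_g f\|_{L^2}\|\Delta_H f\|_{L^2}}+K\|f\|_{L^2}\Bigr),
\]
proved by pairing $\langle \Delta_g f,\Delta_H f\rangle = -\langle df,(\hat L-\Ric)df\rangle = \|\hat\nabla df\|^2 + \langle df,\Ric\, d_H f\rangle$ and using $\|\hat\nabla df\|^2 \geq \tfrac{1}{n}\|\Delta_H f\|^2$ together with the Ricci lower bound. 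Once this inclusion is in hand, $\Delta_H Q_t f$ (with $Q_t=e^{t\Delta_g/2}$) is an $L^2$ solution of the $\Delta_g$-heat equation with initial datum $\Delta_H f$, so $L^2$-uniqueness gives $\Delta_H Q_t = Q_t\Delta_H$ on $\Dom(\Delta_H)$; a second application of $L^2$-uniqueness then yields $P_tQ_s = Q_sP_t$.
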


Before stating the proof, we will need the following lemmas.
\begin{lemma}[\rm{\cite[Proposition]{Li84}}, {\cite[Proposition 4.1]{BaGa17}}]  \label{lemma:Lpunique} 
Let $\mathcal{A}$ be equal to the Laplacian $\Delta_{g}$ or sub-Laplacian $\Delta_{H}$ defined relative to a complete Riemannian or sub-Riemannian metric, respectively. Let $M \times [0, \infty)$, $(x,t) \mapsto u_t(x)$ be a function in $L^2$ of the solving the heat equation
$$(\partial_t - \mathcal{A}) u_t = 0, \quad u_0 = f,$$
for an $L^2$-function $f$. Then $u_t(x)$ is the unique solution to this equation in $L^2$.
\end{lemma}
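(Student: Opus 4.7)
The plan is to exploit the essential self-adjointness of $\mathcal{A}|_{C_c^\infty(M)}$, which has already been invoked earlier in the excerpt: for $\mathcal{A} = \Delta_g$ this is Strichartz's theorem under geodesic completeness, and for $\mathcal{A} = \Delta_H$ it is \cite[Chapter~12]{Str86} together with completeness of $\mathsf{d}_{g_H}$. Let $\bar{\mathcal{A}}$ denote the unique self-adjoint extension; spectral calculus then produces the contraction semigroup $P_t := e^{t\bar{\mathcal{A}}}$ on $L^2(M, \mu)$, and $P_t f$ is an $L^2$ solution of the heat equation. It therefore suffices to show that any $L^2$ solution $w_t$ with $w_0 = 0$ vanishes identically, since $u_t - P_t f$ would then be such a solution.

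The next step is a standard duality argument. Fix $T > 0$ and $\phi \in C_c^\infty(M)$, and consider
\[
h(s) := \langle w_s, P_{T-s}\phi\rangle_{L^2}, \qquad s \in [0, T].
\]
Hypoellipticity of $\partial_t - \mathcal{A}$ (H\"ormander's theorem in the sub-Riemannian case, elliptic regularity in the Riemannian case) ensures that $w_s$ is smooth on $M \times (0, \infty)$, so $\mathcal{A} w_s$ is pointwise defined. Formally differentiating,
\[
h'(s) = \langle \mathcal{A} w_s, P_{T-s}\phi\rangle - \langle w_s, \bar{\mathcal{A}} P_{T-s}\phi\rangle,
\]
and the goal is to show that the two terms cancel. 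Granting this, $h$ is constant, $h(T) = \langle w_T, \phi\rangle$ equals $h(0) = 0$, and density of $C_c^\infty(M)$ in $L^2$ forces $w_T = 0$.

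The hard part is justifying the cancellation, because $w_s$ is only assumed to lie in $L^2$ and $P_{T-s}\phi$ need not be compactly supported. The natural remedy is to approximate: let $\chi_R$ be a smooth cutoff supported in a metric ball of radius $R$ around a fixed point $x_0$, tending to $1$ as $R \to \infty$, with horizontal gradient controlled by $1/R$; such cutoffs exist thanks to completeness. Replacing $P_{T-s}\phi$ by $\chi_R P_{T-s}\phi$ places the pairing inside the realm where $w_s$ can be tested against a compactly supported function and integration by parts on $\mathcal{A}$ is legitimate, and one then passes to the limit using $w_s \in L^2$ together with $P_{T-s}\phi \in \Dom(\bar{\mathcal{A}})$. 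The essential self-adjointness is precisely what guarantees that the limit yields $\langle w_s, \bar{\mathcal{A}} P_{T-s}\phi\rangle$ rather than the action of some larger closed extension, so this is the step that genuinely consumes the completeness hypothesis.
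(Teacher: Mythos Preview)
The paper does not supply its own proof of this lemma; it is quoted verbatim from \cite{Li84} and \cite[Proposition~4.1]{BaGa17} and used as a black box in the proof of Proposition~\ref{prop:Commute}. There is therefore nothing in the paper to compare against.

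Your sketch follows the architecture of the cited references: essential self-adjointness of $\mathcal{A}|_{C_c^\infty(M)}$ is the decisive input, and the uniqueness is then obtained by a duality/energy argument with Lipschitz cutoffs built from the complete distance. Two points would need to be tightened before this becomes a proof rather than an outline. First, the formal expression for $h'(s)$ already presupposes that the pairing $\langle \mathcal{A} w_s, P_{T-s}\phi\rangle$ is finite, which is exactly what is in doubt; the honest computation differentiates $h_R(s) := \langle w_s, \chi_R P_{T-s}\phi\rangle$ from the start and only then passes to the limit. Second, expanding $\mathcal{A}(\chi_R P_{T-s}\phi)$ produces not only first-order commutator terms in $\nabla^H\chi_R$ but also a term $(\mathcal{A}\chi_R) P_{T-s}\phi$, so you need cutoffs with $\|\mathcal{A}\chi_R\|_{L^\infty} \to 0$ as well; in the sub-Riemannian case this is where the argument of \cite[Chapter~12]{Str86} (or the construction in \cite[Proposition~4.1]{BaGa17}) does real work, and it is worth saying so explicitly rather than folding it into ``such cutoffs exist thanks to completeness''. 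With those two points addressed your argument matches the cited sources.
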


\begin{lemma} \label{lemma:DualHatSL}
Let $(M, H, g_H)$ be a sub-Riemannian manifold and define $\Delta_H$ as the sub-Laplacian with respect to a volume form $\mu$. Let $g$ be a taming metric of $g_H$ with volume form $\mu$. Assume that $\nabla$ and its adjoint $\hat \nabla$ are compatible with $g_H^*$ and $g$, respectively.
If $\hat{L} = L(\hat{\nabla})$, then with respect to $g$, $\hat{L}^* = \hat{L} = - (\hat \nabla_{\pr_H})^* \hat\nabla_{\pr_H}$. In particular, $\hat{L}f = \Delta_{H} f$ for any $f \in C^\infty(M)$.
\end{lemma}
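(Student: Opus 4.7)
The plan is to prove the central identity $\hat L = -(\hat\nabla_{\pr_H})^*\hat\nabla_{\pr_H}$ on tensors. Self-adjointness $\hat L^* = \hat L$ is then immediate from the manifest symmetry of $A^*A$, and $\hat L f = \Delta_H f$ for $f\in C^\infty(M)$ follows by specializing to functions, since $(\hat\nabla_{\pr_H})^*\hat\nabla_{\pr_H} f = -\Delta_H f$ by the sub-Riemannian Dirichlet form identity $\int \langle df, d\phi\rangle_{g_H^*}\, d\mu = -\int \phi\,\Delta_H f\, d\mu$.

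As a preliminary, I would show that the one-form $\beta(v) := \tr T^\nabla(v, \cdot)$ vanishes on $H$. Proposition~\ref{prop:ChoiceCon} allows us to take $\nabla$ as in \eqref{MaxRicCurv}, so $T^\nabla(A, B) = -\sharp \iota_{A \wedge B}\zeta$ for the three-form $\zeta$ of \eqref{TorsionForm}. In any local $g$-orthonormal frame $e_1,\dots,e_{n+\nu}$, anti-symmetry of $\zeta$ gives
\[
\beta(v) = -\sum_k \zeta(v, e_k, e_k) = 0;
\]
in fact $\beta\equiv 0$, though only $\beta|_H = 0$ will be used below.

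The main computation is an integration by parts for $\hat\nabla_{\pr_H}\colon\Gamma(E)\to\Gamma(T^*M\otimes E)$ in a local $g$-orthonormal frame $A_1,\dots,A_n$ of $H$. Exploiting $\hat\nabla g = 0$ (and hence preservation of every induced fibre inner product on $E$), one obtains
\[
(\hat\nabla_{\pr_H})^*\hat\nabla_{\pr_H}\xi = -\sum_i \hat\nabla_{A_i}^2\xi - \sum_i (\dv_\mu A_i)\,\hat\nabla_{A_i}\xi.
\]
Comparing with $\hat L\xi = \sum_i \hat\nabla_{A_i}^2\xi - \sum_i \hat\nabla_{\hat\nabla_{A_i}A_i}\xi$, and observing that $\hat\nabla_{A_i}A_i = \nabla_{A_i}A_i\in H$ (the diagonal of $T^\nabla$ vanishes, and $\nabla$ preserves $H$), the desired identity reduces to
\[
V_0 := \sum_i\bigl[(\dv_\mu A_i)\,A_i + \nabla_{A_i}A_i\bigr] = 0.
\]
Pairing the horizontal vector $V_0$ against $A_j$, applying formula~\eqref{divergence} to $\hat\nabla$, substituting $\hat\nabla = \nabla - T^\nabla$, and using $\nabla g_H = 0$ in the form $\langle \nabla_{A_j}A_i, A_j\rangle = -\langle A_i, \nabla_{A_j}A_j\rangle$, I expect the clean outcome $\langle V_0, A_j\rangle_g = 2\beta(A_j) = 0$.

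The principal technical point I expect to be delicate is the identification $\langle V_0, A_j\rangle = 2\beta(A_j)$: formula~\eqref{divergence} is naturally applied to $\hat\nabla$ (preserving $g$), while the symmetry identity for $\nabla A$ is naturally applied to $\nabla$ (preserving $g_H^*$), and the coefficient $2$ arises by combining one contribution of $\beta$ from $-\hat\beta = \beta$ in~\eqref{divergence} for $\hat\nabla$ with a second contribution from the trace $\sum_k\langle T^\nabla(e_k, A_j), e_k\rangle = -\beta(A_j)$ in the conversion $\hat\nabla \to \nabla$.
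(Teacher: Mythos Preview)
Your overall strategy matches the paper's: both arguments reduce the identity $\hat L = -(\hat\nabla_{\pr_H})^*\hat\nabla_{\pr_H}$ to the vanishing of $\beta|_H$ (equivalently $\sharp^H\hat\beta = 0$), and your expected identity $\langle V_0, A_j\rangle_g = 2\beta(A_j)$ does indeed come out as you describe.

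There is, however, one genuine gap. The lemma is stated for an \emph{arbitrary} connection $\nabla$ compatible with $g_H^*$ whose adjoint is compatible with $g$; Proposition~\ref{prop:ChoiceCon} does not assert uniqueness, so you are not entitled to ``take $\nabla$ as in~\eqref{MaxRicCurv}.'' Your skew-symmetry argument for $\beta\equiv 0$ therefore covers only that one connection, whereas $\hat L = L(\hat\nabla)$ genuinely depends on which $\nabla$ was chosen. The repair is already in the paper: the proof of Proposition~\ref{prop:ChoiceCon} shows that every connection satisfying the hypotheses has the form $\nabla^{\lambda,\beta}$ of~\eqref{GoodConnection}, and verifies $\tr T^{\nabla^{\lambda,\beta}}(v,\cdot) = 0$ for all $v\in H$. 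This is exactly the fact the paper's own proof invokes (``$\hat\beta = 0$ since $\hat\nabla$ has to be on the form~\eqref{GoodConnection}'').

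At the level of execution the paper is more economical. Rather than working in an arbitrary orthonormal frame and reducing to $V_0 = 0$ via the divergence formula~\eqref{divergence}, it selects at each point $x$ a local orthonormal basis $A_1,\dots,A_n$ of $H$ with $\nabla A_i(x)=0$ (possible because $\nabla g_H^*=0$); then $\hat\nabla_Z A_i(x) = T^\nabla(A_i,Z)(x)$, and the adjoint of $\hat F(g_H^*) = \sum_i \flat A_i \otimes \hat\nabla_{A_i}$ collapses immediately to $-\iota_{\sharp^H\hat\beta} - \sum_i \iota_{A_i}\hat\nabla_{A_i}$ at $x$. Your route arrives at the same place but needs the bookkeeping you flagged in your last paragraph; the parallel-frame trick sidesteps it entirely.
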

\begin{proof}

Define $\hat{F}(A\otimes B) = \flat A \otimes \hat{\nabla}_B$ and extend it by linearity to all sections of $TM^{\otimes 2}$. Again we know that for any point $x$, there exists a basis $A_1, \dots, A_n$ such that $\nabla A_i(x) = 0$. This means that $\hat{\nabla}_Z A_i(x) = T^{\nabla}(A_i, Z)(x)$ for the same basis, and hence locally
$$\hat{F}(g_H^*)^* = - \iota_{\shh \hat{\beta}} - \sum_{i=1}^n \iota_{A_i} \hat{\nabla}_{A_i}, \quad \hat{\beta}(v) = \tr T^{\hat{\nabla}}(v, \blank).$$
However, since $\hat{\nabla}$ is the adjoint of a connection compatible with $g_H^*$ we have $\hat{\beta}  = 0$ since $\hat{\nabla}$ has to be on the form \eqref{GoodConnection}. Hence $\hat{F}(g_H^*)^* \hat{F}(g_H^*) = - \hat L$ and the result follows.
\end{proof}

\begin{proof}[Proof of the Proposition~\ref{prop:Commute}]\ 
\begin{enumerate}[\rm (a)] 
\item It is sufficient to prove the statement for compactly supported functions. 
Note that for $f,\phi \in C^\infty_c(M)$, $\langle \Delta_{H} \Delta_{g} f, \phi \rangle_{L^2} = \langle f, \Delta_{g} \Delta_{H} \phi \rangle_{L^2}$.
Hence, we need to show that $\Delta_{g} \Delta_{H}$ is its own dual on compact supported forms.

Let $\nabla$ be as in \eqref{MaxRicCurv} with adjoint $\hat{\nabla}$. Define $L = L(\nabla)$, $\hat{L} = L(\hat{\nabla})$, $\Ric = \Ric(\nabla)$ and introduce $\Ric^a = \frac{1}{2} \left( \Ric - \Ric^* \right).$
By Lemma~\ref{lemma:DualHatSL} we have $\hat{L}^* = \hat{L}$. In addition,
\begin{align*}
& \langle \Delta_{g} \Delta_{H} f, \phi \rangle_{L^2(g^*)} = - \langle dL f, d\phi \rangle_{L^2(g^*)}
= - \langle (\hat{L} - \Ric) df, d\phi \rangle_{L^2(g^*)} \\
& =- \langle df, (\hat{L} - \Ric) d\phi \rangle_{L^2(g^*)} + 2 \langle \Ric^a df, d\phi \rangle_{L^2(g^*)} \\
&  =\langle f, \Delta_{g} \Delta_{H} \phi \rangle_{L^2} + 2 \langle \Ric^a df, d\phi \rangle_{L^2(g^*)} .
\end{align*}
Furthermore, $2 \langle \Ric^a df, d\phi \rangle_{L^2} = \langle \calC , df \wedge d\phi \rangle_{L^2} = \langle \delta \calC, f d\phi \rangle_{L^2}$. Since all one-forms can we written as sums of one-forms of the type $f d\phi$, it follows that $(\Delta_{g} \Delta_{H})^*f = \Delta_{g} \Delta_{H}f$ for $f \in C_c^\infty(M)$ if and only if $\delta \calC = 0$.
\item Write $\Delta_g = \Delta_H + \Delta_V$ and $df = d_H f + d_Vf$, with $d_Hf = \pr_H^* df$ and $d_Vf = \pr_V^* df$. Then $\langle \Delta_H f, \phi \rangle_{L^2(g^*)} = -\langle d_H f, d_H \phi \rangle$ and similarly for $\Delta_V$.

Observe that for any compactly supported $f$,
\begin{align*}
& \| \Delta_g f \|_{L^2} \| \Delta_H f\|_{L^2} \geq \langle \Delta_g f, \Delta_H f\rangle_{L^2} =- \langle df, (\hat L - \Ric) df \rangle_{L^2(g^*)} \\
&= \| \hat{\nabla} df \|^2_{L^2(g^{*\otimes 2})} + \langle df, \Ric d_H f\rangle_{L^2(g^*)}  \geq \frac{1}{n} \| \Delta_Hf \|^2_{L^2} - K \|df\|_{L^2(g^*)} \| d_H f \|_{L^2(g^*)}.
\end{align*}
and ultimately
\begin{equation} \label{DeltaHBoundedByDelta}
 \| \Delta_Hf \|^2_{L^2}  \leq n  \sqrt{\| \Delta_g f \|_{L^2} \| \Delta_H f\|_{L^2}}(\sqrt{\| \Delta_g f \|_{L^2} \| \Delta_H f\|_{L^2}} + K \| f\|_{L^2}).
\end{equation}
By approaching any $f \in \Dom(\Delta_g)$ by compactly supported functions, we conclude from \eqref{DeltaHBoundedByDelta} that any such function must satisfy $\| \Delta_H f\|_{L^2} < \infty$. As a consequence, $\Dom(\Delta_g) \subseteq \Dom(\Delta_{H})$.

Let $Q_ t= e^{t \Delta_{g} /2}$ and $P_t = e^{t \Delta_{H} /2}$  be the semigroups of $\Delta_{g}$ and $\Delta_{H}$, which exists by the spectral theorem. For any
$f \in \Dom(\Delta_{H})$, $u_t = \Delta_{H} Q_t f$ is an $L^2$ solution of
$$\left(\frac{\partial}{\partial t} - \frac{1}{2} \Delta_g\right) u_t = 0, \quad u_0 =
\Delta_{H} f.$$ By Lemma~\ref{lemma:Lpunique} we obtain $\Delta_{H}
Q_t f = Q_t \Delta_{H} f$. Furthermore, for any $s >0$ and $f \in L^2$, we know that $Q_sf \in
\Dom(\Delta_g) \subseteq \Dom(\Delta_{H})$, and since
$$\left(\frac{\partial}{\partial t} -
\frac{1}{2} \Delta_{H} \right) Q_s P_t f = 0,$$ it again follows from
Lemma~\ref{lemma:Lpunique} that $P_t Q_s f = Q_s P_t f$ for any $s,t
\geq 0$ and $f \in L^2$. The operators
consequently spectrally commute, see \cite[Chapter VIII.5]{ReSi80}.
\end{enumerate}
\end{proof}

\begin{remark} The results of Lemma~\ref{lemma:DualSL} and Lemma~\ref{lemma:DualHatSL} do not need the bracket generating assumptions. The result of $\hat L$ being symmetric is also found in \cite[Theorem~2.5.1]{EJL99} for the case when $\nabla$ and $\hat \nabla$ preserves the metric.
\end{remark}

\subsection{Proof of Theorem~\ref{th:main}} \label{sec:Lifetime}
We consider the assumptions that $\delta \calC = 0$ and that the symmetric part $\Ric^s$ of the $\Ric$ is bounded from below. By Lemma~\ref{lemma:AS}, the anti-symmetric part $\Ric^a$ is a bounded operator. Furthermore, the operators $\Delta_{g}$ and $\Delta_{H}$ spectrally commute by Proposition~\ref{prop:Commute}.

Let $X_t(x)$, $\hptr_t$ and $\hat{Q}_t$ be as in the statement of the theorem. If $$N_t = \hat{Q}_t \hptr_t^{-1} \alpha(X_t(x))$$ for an arbitrary $\alpha \in \Gamma(T^*M)$, then by It\^o's formula
$$d  N_t  \stackrel{\text{loc.\,m.}}{=} \hat{Q}_t \hptr_t^{-1} (\hat{L} - \Ric)\alpha(X_t(x)) dt$$
where $\stackrel{\text{loc.\,m.}}{=} $ denotes equivalence modulo differential of local martingales. Consider $L^2(T^*M)$ as the space of $L^2$-one-forms on $M$ with respect to $g$. Since $g$ is complete and $\Ric^s$ bounded from below, the operator $\hat{L} - \Ric^s$ is essentially self-adjoint by Lemma~\ref{lemma:DualHatSL} and Lemma~\ref{lemma:SA}. Hence, by Lemma~\ref{lemma:StochRep}, there is a strongly continuous semigroup $P^{(1)}_t$ on $L^2(TM)$ with generator $(\hat{L} - \Ric, \Dom(\hat{L} - \Ric^s))$ such that
$$P^{(1)}_t\alpha(x) = \mathbb{E}[1_{t < \tau(x)} N_t]= \mathbb{E}[1_{t < \tau(x)}\hat{Q}_t \hptr_t^{-1} \alpha(X_t(x))].$$

We want to show that for any compactly supported function $f$, $P^{(1)}_t df = dP_t f$ where $P_t f(x) = \mathbb{E}[f(X_t(x)) 1_{t< \tau(x)}]$. Following the arguments in \cite[Appendix~B.1]{DrTh01}, we have $P_t f = e^{t\Delta_{H}/2}f$ where the latter semigroup is the $L^2$-semigroup defined by the spectral theorem and the fact that $\Delta_{H}$ is essentially self-adjoint on compactly supported functions. To this end, we want to show that $dP_t f$ is contained in the domain of the generator of $P^{(1)}_t$. This observation will then imply that $P_t^{(1)} df = dP_t f$, since $P_t^{(1)} df$ is the unique solution to
$$\frac{\partial}{\partial t} \alpha_t = \frac{1}{2} L \alpha_t, \quad \alpha_0 = df,$$
with values in $\Dom(\hat{L}- \Ric^s)$ by strong continuity, \cite[Chapter II.6]{EnNa00}.

We will first need to show that $dP_t f$ is indeed in $L^2$. Let $ \Delta_{g}$ denote the Laplace-Beltrami operator of $g$, which will also be essentially self-adjoint on compactly supported functions since $g$ is complete. Denote its unique self-adjoint extension by the same symbol. Since the operators spectrally commute, $e^{s\Delta_{g}} e^{t\Delta_{H}} = e^{t\Delta_{H}} e^{s\Delta_{g}}$  for any $s,t \geq 0$ which implies $\Delta_{g} e^{t \Delta_{H}} f= e^{t \Delta_{H}} \Delta_{g} f$ for any~$f$ in the domain of $\Delta_{g}$. In particular,
$$\langle d P_t f, dP_tf \rangle_{L^2(g^*)} = - \langle \Delta_{g} P_t f, P_t f\rangle_{L^2(g^*)} = - \langle P_t \Delta_{g} f, P_t f \rangle_{L^2(g^*)} < \infty.$$

Next, since $\langle (\hat{L} - \Ric^s) \alpha, \alpha \rangle_{L^2(g^*)} \geq - K \| \alpha \|_{L^2(g^*)}^2$, the domain $\Dom(\hat{L} - \Ric^s)$ equals the completion of compactly supported one-forms $\Gamma_c(T^*M)$ with respect to the quadratic form
\begin{align*} q(\alpha, \alpha) & = (K+1)\langle \alpha, \alpha \rangle_{L^2(g^*)} - \langle (\hat{L} -\Ric^s) \alpha, \alpha \rangle_{L^2(g^*)} \\
& = (K+1)\langle \alpha, \alpha \rangle_{L^2(g)} - \langle (\hat{L} -\Ric) \alpha, \alpha \rangle_{L^2(g^*)}.\end{align*}
Since $P_tf$ is in the domain of both $\Delta_{g}$ and $\Delta_{H}$ for any compactly supported $f$, we have that for any fixed $t$, there is a sequence of compactly supported functions $h_n$ such that $h_n \to P_t f$, $\Delta_{H} h_n \to \Delta_{H} P_t f$ and $\Delta_{g} h_n \to \Delta_{g} P_t f$ in $L^2$. From the latter fact, it follows that $dh_n$ converges to $dP_tf$ in $L^2$ as well. Furthermore,
\begin{align*}
q(dh_n, dh_n) & = (K+1) \langle d h_n, dh_n \rangle_{L^2(g)} - \langle (\hat{L} - \Ric) dh_n, dh_n \rangle_{L^2(g)} \\
& = - (K+1) \langle h_n, \Delta_{g} h_n \rangle_{L^2(g)} - \langle d \Delta_{H} h_n , dh_n \rangle_{L^2(g)} \\
& = - (K+1) \langle h_n, \Delta_g h_n \rangle_{L^2(g)} + \langle  \Delta_{H} h_n , \Delta_g h_n \rangle_{L^2(g)},
\end{align*}
which has a finite limit as $n \to \infty$. Hence, $dP_t f \in \Dom(\hat{L} - \Ric^s)$ and $P^{(1)}_t df  = dP_tf$.

Using that $\langle \Ric \alpha , \alpha\rangle_{g^*} \geq - K | \alpha|_{g^*}^2$, Gronwall's lemma and the fact that $\hat{\nabla}$ preserves the metric means that
$$|1_{t < \tau(x)} \hat{Q}_t \hptr_t^{-1} \alpha(X_t(x)) |_{g^*} \leq e^{Kt/2} 1_{t< \tau(x)} | \alpha |_{g^*}(X_t(x)).$$
Hence,
\begin{equation} \label{ineq} | P_t^{(1)} \alpha (x) |_{g^*} \leq e^{Kt/2} P_t |\alpha|_{g^*} (x).\end{equation}
We assumed that $g$ was complete, so we know that there exists a sequence of compactly supported functions $g_n$ such that $g_n \uparrow 1$ and such that $\|dg_n\|^2_{L^\infty(g^*)} \to 0$. Since $| dP_t g_n |_{g^*} \to 0$ uniformly by \eqref{ineq} and we know that $P_t g_n \to P_t1$, we obtain $dP_t 1 = 0$. Hence, we know that $P_t 1 =1$, which is equivalent to $\tau(x) = \infty$ almost surely.

It is a standard argument to extend the formulas from functions of compact support to bounded functions with $\| df \|_{L^\infty(g^*)} < \infty$.

\subsection{Foliations and a counter-example} \label{sec:FoliationCounter}
Let $(M, H, g_H)$ be a sub-Riemannian manifold and let $g$ be a Riemannian metric taming $g_H$ and satisfying $\II = 0$ with $\II$ as in \eqref{II}. Write $V$ for the orthogonal complement of $H$. Define \emph{the Bott connection}, by
\begin{align} \label{rnabla} \rnabla_{Z_1} Z_2 & = \pr_{H} \nabla^{g}_{\pr_{H} Z_1} \pr_{H} Z_2 + \pr_{V} \nabla^{g}_{\pr_{V} Z_1} \pr_{V} Z_2 \\ \nonumber
& \qquad + \pr_{H} [\pr_{V} Z_1, \pr_{H} Z_2] + \pr_{V} [\pr_{H} Z_1, \pr_{V} Z_2]
\end{align}
where $\nabla^{g}$ denote the Levi-Civita connection. Its torsion $\mathring{T} := T^{\rnabla}$ equals $\mathring{T} = - \calR - \bar{\calR}$ and $\rnabla g = 0$ is equivalent to requiring $\II = 0$. Since $\rnabla$ is compatible with the metric, we have
$$\rnabla_Z = \nabla^{g}_Z + \frac{1}{2} \mathring{T}_Z - \frac{1}{2} \mathring{T}^*_Z - \frac{1}{2} \mathring{T}^*_{\blankdown} Z, \quad T_Z(A) = T(Z,A).$$
If $\zeta$ and $\nabla$ are as in \eqref{TorsionForm} and \eqref{MaxRicCurv}, respectively, then
$$\zeta(v_1, v_2, v_2) = -\circlearrowright \langle \mathring{T}(v_1, v_2), v_3 \rangle_{g}, \quad \text{and} \quad \nabla_{Z}  = \rnabla_{Z} + \mathring{T}_{\blankdown}^* Z.$$
The connection $\rnabla$ does not have skew-symmetric torsion, however, it does have the advantage that $\rnabla_A B$ is independent of $g|{V}$ if either $A$ or $B$ takes its values in $H$, see \cite[Section~3.1]{GrTh14a}.

\subsubsection{Totally geodesic, Riemannian foliations} \label{sec:Foliations} Assume that $\bar{\calR} = 0$, i.e.~assume that the orthogonal complement $V$ of $H$ is integrable. Let $\calF$ be the corresponding foliation of $V$ that exists from the Frobenius theorem. We have the following way of interpreting the condition $\II = 0$. The tensor $\II(\pr_{V} \blank, \pr_{V} \blank)$ equals the second fundamental form of the leaves, i.e.~$\pr_{H} \nabla^{g}_Z W = \II(Z,W)$ for any $Z, W \in \Gamma(V)$. Hence, $\II(\pr_{V} \blank, \pr_{V} \blank) = 0$ is equivalent to the leaves of~$\calF$ being totally geodesic immersed submanifolds. On the other hand, the condition $0 = -2 \langle \II(A, A ), Z \rangle = (\calL_Z g)(A,A)$ for any $A \in \Gamma(H)$, $Z \in \Gamma(V)$ is the definition of $\calF$ being \emph{a Riemannian foliations}. Locally, such a foliation $\calF$ consists of the fibers of a Riemannian submersion. In other words, every $x_0 \in M$ has a neighborhood $U$ such that there exists a surjective submersion between two Riemannian manifolds,
\begin{equation} \label{Rsubmersion} \pi:(U, g|_U) \to ( \check{M}_U, \check{g}_U),\end{equation}
satisfying $$TU = H|U \oplus_{\perp} \ker \pi_*, \quad\calF|U = \{ \pi^{-1}(\check{x}) \colon\check{x} \in \check{M}_U \}$$ and that $\pi_*\colon H_x \to T_{\pi(x)} \check{M}_U$ is an isometry for every $x \in U$.

The following result is found in \cite{Elw14} for totally geodesic Riemannian foliations. Let $X_t(\blank)$ be a stochastic flow with generator $\frac{1}{2}\Delta_{H}$ where the latter is define relative to the volume density of $g$.
\begin{theorem}
If $(M, g)$ is a stochastically complete Riemannian manifold, then $X_t(x)$ has infinite lifetime.
\end{theorem}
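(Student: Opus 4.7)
The plan is to exploit the operator decomposition $\Delta_g = \Delta_H + \Delta_V$, where $\Delta_V$ denotes the leafwise Laplace-Beltrami operator on the leaves of $\calF$. Under the present assumptions ($\II = 0$ and $\bar{\calR} = 0$), the local Riemannian-submersion structure \eqref{Rsubmersion} implies that the volume density $\mu_g$ factors as a leafwise product, so in an orthonormal frame $A_1,\dots,A_n$ of $H$ and $Z_1,\dots,Z_\nu$ of $V$ the divergence $\dv_{\mu_g}$ splits cleanly into horizontal and vertical contributions. A direct computation then gives $\Delta_g f = \Delta_H f + \Delta_V f$ for every $f \in C^\infty(M)$.

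Next, because $\bar{\calR} = 0$ the two-form $\calC$ of \eqref{calC} vanishes identically, so the hypothesis $\delta \calC = 0$ of Proposition~\ref{prop:Commute}(a) is trivially satisfied; this yields $[\Delta_H,\Delta_g] = 0$ on $C_c^\infty(M)$, and hence also $[\Delta_H,\Delta_V] = 0$. Since $g$ is complete and tames $g_H$, all three operators are essentially self-adjoint on $C_c^\infty(M)$, so by the spectral theorem the associated $L^2$ heat semigroups $P^g_t,\ P^H_t,\ P^V_t$ of $\tfrac12\Delta_g,\ \tfrac12\Delta_H,\ \tfrac12\Delta_V$ pairwise commute and satisfy the factorization
\begin{equation*}
P^g_t = P^H_t P^V_t \qquad \text{on } L^2(M,\mu_g).
\end{equation*}
Essential self-adjointness together with hypoellipticity identifies $P^H_t$ with the minimal (probabilistic) Markov semigroup of the $\tfrac12\Delta_H$-diffusion $X_t$, and similarly for $P^g_t$.

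To conclude, choose $f_n \in C_c^\infty(M)$ with $0 \leq f_n \uparrow 1$. Stochastic completeness of $(M,g)$ together with monotone convergence gives $P^g_t f_n \uparrow P^g_t 1 = 1$. On the other hand, $P^V_t$ is sub-Markovian so $P^V_t f_n \leq 1$, and positivity of $P^H_t$ yields $P^H_t P^V_t f_n \leq P^H_t 1 \leq 1$. The factorization $P^g_t f_n = P^H_t P^V_t f_n$ then sandwiches $P^H_t 1$ between $1$ and $1$, forcing $P^H_t 1 = 1$ for every $t$ and every basepoint, equivalent to $\tau(x) = \infty$ almost surely.

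The main obstacle I expect is transferring the $L^2$-level factorization to a pointwise identity that can be tested against the approximants $f_n$ and their monotone limit. This requires a careful Markovian identification of $P^V_t$, which acts only leafwise, and a Fubini-type argument along the local Riemannian-submersion structure \eqref{Rsubmersion}. A reassuring feature of the squeeze above is that it uses only the sub-Markov property $P^V_t 1 \leq 1$, so no separate stochastic completeness of the individual leaves is needed; the leafwise geometry enters only through the commutation $[\Delta_H,\Delta_V] = 0$ and the splitting $\Delta_g = \Delta_H + \Delta_V$.
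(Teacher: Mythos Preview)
The paper does not prove this theorem; it merely quotes it from \cite{Elw14}. Elworthy's argument, as summarized by the lemma immediately following the theorem statement, proceeds via the lifted foliation $\hat{\calF}$ on the orthonormal frame bundle $\Ort(H)$ and the isometric action of the flows $e^{t\hat A_u}$ on its leaves, rather than via any semigroup factorization.

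Your proposal has a genuine gap at the step ``commutation on $C_c^\infty(M)$ $\Rightarrow$ spectral commutation''. Two essentially self-adjoint operators that commute on a common core need \emph{not} spectrally commute; Nelson's classical counterexample shows exactly this. The paper itself is careful about this point: Proposition~\ref{prop:Commute}(a) gives only the algebraic identity $\Delta_g\Delta_H f=\Delta_H\Delta_g f$ on smooth functions, and part~(b) upgrades this to spectral commutation \emph{only} under the additional hypothesis that $\Ric(\nabla)$ is bounded below, via the domain estimate~\eqref{DeltaHBoundedByDelta}. Your hypotheses do not include any curvature bound---only stochastic completeness of $(M,g)$---so you cannot invoke part~(b), and part~(a) alone does not deliver the semigroup factorization $P_t^g=P_t^H P_t^V$.

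There are two secondary issues. First, $\Delta_V$ fails the strong H\"ormander condition (the bundle $V$ is integrable), so the Strichartz argument used for $\Delta_H$ and $\Delta_g$ does not apply, and essential self-adjointness of $\Delta_V$ on $C_c^\infty(M)$ is not established. Second, even granting spectral commutation of $\Delta_H$ and $\Delta_V$, you would still need to show that the closure of $\Delta_H+\Delta_V$ on $C_c^\infty(M)$ coincides with the self-adjoint $\Delta_g$, which is again a non-trivial domain question. Your squeeze argument at the end is fine \emph{given} the pointwise factorization, but the factorization itself is where the argument breaks.
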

In particular, if the Riemannian Ricci curvature $\Ric_{g}$ is bounded from below, $X_t(x)$ has infinite lifetime. We want to compare this result using the entire Riemannian geometry with our result using $\Ric(\nabla)$, an operator only defined by taking the trace over horizontal vectors. For this special case, it turns out that $\Ric_{g}$ being bounded from below is actually a weaker condition than $\Ric(\nabla)$ being bounded from below.

\begin{proposition} \label{prop:LocSym}
Let $(M, H, g_H)$ be a sub-Riemannian manifold with $H$ is bracket-generating. Let $\calF$ be a foliation of $M$ corresponding to an integrable subbundle $V$ such that $TM = H \oplus V$. Let $g$ be any Riemannian metric taming $g_H$ such that $\II=0$, making $\calF$ a totally geodesic Riemannnian foliation. Assume finally that $g$ is complete.
For $x \in M$, let $F_x$ denote the leaf of the foliation $\calF$ containing $x$. Write $\Ric_{F_x}$ for the Ricci curvature tensor of $F_x$.
\begin{enumerate}[\rm (a)]
\item For any $x, y \in M$, there exist neighborhoods $x \in U_x \subseteq F_x$ and $y \in U_y \subseteq F_y$, and an isometry
$$\Phi:U_x \to U_y, \quad \Phi(x) = y.$$
As a consequence, if we define $\Ric_{\calF}$ such that
$$\Ric_{\calF}(v,w) = \Ric_{F_x} (\pr_{V}v, \pr_{V} w), \quad \text{ for any } v, w \in T_x M,$$
then $\Ric_{\calF}$ is bounded.
\item Let $\Ric_{g}$ be the Ricci curvature of the Riemannian metric $g$. Let $\nabla$ be defined as in \eqref{MaxRicCurv}. Then
\begin{align} \label{RicGRic}
\quad \Ric_{g}(v,v) &= \Ric(\nabla)(\flat v)(v)  + \frac{1}{2} \sum_{i=1}^n \| \calR(A_i, v) \|_{g}^2  + \Ric_{\calF}(\pr_{V} v, \pr_{V} v).
\end{align}
In particular, $\Ric_{g}$ has a lower bound if $\Ric(\nabla)$ has a lower bound.
\end{enumerate}
\end{proposition}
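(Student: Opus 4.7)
My plan for part~(a) is to exploit the local Riemannian submersion structure described after \eqref{Rsubmersion}. Each $x_0 \in M$ admits a neighborhood $U$ carrying a Riemannian submersion $\pi \colon (U, g|_U) \to (\check M_U, \check g_U)$ whose fibers are the connected components of the leaves in $U$. For a Riemannian submersion with totally geodesic fibers, the horizontal lift of a smooth curve in the base sends fiber to fiber by an isometry: total geodesy guarantees that the horizontal flow sends fibers to fibers, while the Riemannian condition $(\calL_A g)(Z, Z) = 0$ for $A \in \Gamma(H)$ and $Z \in \Gamma(V)$ ensures that these maps preserve the induced fiber metric. This gives the claim locally, whenever $x$ and $y$ lie in leaves meeting a common submersion chart. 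To pass to arbitrary $x, y \in M$, I use that $H$ is bracket-generating and $M$ is connected to obtain a horizontal curve joining $x$ to $y$, cover its image by finitely many such charts, and compose the local isometries. Since isometries preserve Ricci curvature, the function $m(x) := |\Ric_{F_x}(x)|_g$ is then constant on $M$, which yields boundedness of $\Ric_{\calF}$.

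For part~(b), I plan to carry out an O'Neill-type curvature computation relating $\nabla^g$, the Bott connection $\rnabla$, and the connection $\nabla$ from \eqref{MaxRicCurv}. Under $\bar{\calR} = 0$, the identities recalled at the start of this subsection give explicit tensorial expressions of $\nabla^g - \rnabla$ and $\nabla - \rnabla$ purely in terms of $\calR$ and its transposes; in particular, $\nabla_Z = \rnabla_Z + \mathring{T}_{\blankdown}^* Z$ with $\mathring{T} = -\calR$. Consequently $\nabla^g - \nabla$ is explicit, and the curvature $R^g$ can be expanded in terms of $R^\nabla$ plus correction terms that are either linear in $\nabla \calR$ or quadratic in $\calR$. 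After tracing over a horizontal orthonormal frame $A_1, \dots, A_n$ as in \eqref{Ric}, the linear correction terms cancel by a Bianchi-type identity of the sort appearing in the proof of Lemma~\ref{lemma:AS}, while the quadratic corrections combine to produce $\frac{1}{2} \sum_{i=1}^n \|\calR(A_i, v)\|_g^2$; the coefficient $\frac{1}{2}$ ultimately comes from the identification of O'Neill's $A$-tensor with $\frac{1}{2}\calR$ on horizontal pairs.

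The remaining purely vertical contribution $\Ric_{\calF}(\pr_{V} v, \pr_{V} v)$ is produced by the Gauss equation for the totally geodesic leaves: since the second fundamental form of each leaf vanishes, the intrinsic leaf Ricci traced along a vertical frame coincides with the vertical-vertical trace of $R^g$. Combining the horizontal, mixed and vertical decomposition of $\Ric_g(v, v)$, and using that $\Ric(\nabla)\alpha = 0$ for $\alpha \in \Ann(H)$ as noted after the statement of Theorem~\ref{th:main}, then yields \eqref{RicGRic}, and the lower bound statement follows immediately since the $\frac{1}{2}|\calR|^2$ and $\Ric_{\calF}$ contributions are bounded in absolute value (the latter by part~(a), the former pointwise by the $g$-norm of $\calR$, which is bounded along any fixed leaf by the local isometry argument above extended to the full ambient geometry). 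The principal obstacle will be the bookkeeping in the O'Neill-type computation: verifying that all $\nabla \calR$ correction terms cancel cleanly under the horizontal trace, and that the numerical coefficient $\frac{1}{2}$ and its sign emerge correctly rather than being obscured by the interaction between the three connections $\nabla^g$, $\rnabla$ and $\nabla$.
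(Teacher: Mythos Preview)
Your plan for part~(b) is essentially the paper's approach: express $R^g$ in terms of $R^\nabla$ plus torsion correction terms via the skew-symmetric torsion formula, split the Ricci trace into horizontal and vertical parts, and identify the purely vertical block with $\Ric_{\calF}$ via the Gauss equation for totally geodesic leaves. One slip: you justify the final lower-bound statement by arguing that $|\calR|_g$ is bounded on $M$ via ``the local isometry argument above extended to the full ambient geometry.'' The local isometries in part~(a) are only between leaves with their induced metrics, not ambient isometries, so this does not give a bound on $\calR$. Fortunately no such bound is needed: the term $\tfrac{1}{2}\sum_i \|\calR(A_i,v)\|_g^2$ is nonnegative, so a lower bound on $\Ric(\nabla)$ together with the two-sided bound on $\Ric_{\calF}$ from part~(a) already yields a lower bound on $\Ric_g$.

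For part~(a) your approach is genuinely different from the paper's and somewhat more elementary. The paper lifts the foliation to the orthonormal frame bundle $\Ort(H)$ using the Bott connection, invokes a lemma (from Molino/Elworthy) that the flows of the canonical horizontal vector fields $\hat A_u$ send lifted leaves isometrically onto lifted leaves, and then applies the Orbit Theorem together with the bracket-generating condition to conclude that the orbit of any frame projects onto all of $M$. You instead work directly on $M$: in each local submersion chart, horizontal lifts of base curves act as isometries between fibers (this is exactly the statement that $(\calL_A g)(Z,Z)=0$ for $A\in\Gamma(H)$, $Z\in\Gamma(V)$, which is the totally-geodesic half of $\II=0$), and you globalize by covering a horizontal curve from $x$ to $y$ (furnished by Chow--Rashevskii) with finitely many such charts and composing. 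Both arguments ultimately rest on the same mechanism---horizontal transport is a leaf isometry---but the paper's frame-bundle formulation packages the composition step into a single application of the Orbit Theorem, while your version is more hands-on and avoids introducing $\Ort(H)$.
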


Before presenting the proof we need the next lemma. Let $(M, g)$ be a complete Riemannian manifold and let~$\calF$ be a Riemannian foliation with totally geodesic leaves. Let $V$ be the integrable subbundle of $TM$ corresponding to $\calF$ and define~$H$ as its orthogonal complement. Write $n$ for the rank of $H$ and $\nu$ for the rank of~$V$. Define
$$\Ort(n) \to \Ort(H) \stackrel{p}{\to} M$$
as the orthonormal frame bundle of $H$. Introduce the principal connection $E$ on~$p$ corresponding to the restriction of $\rnabla$ to $H$. In other words, $E$ is the subbundle of $T\Ort(H)$ satisfying $T\Ort(H) = E \oplus \ker p_*$, $E_{\phi} \cdot a= E_{\phi \cdot a}$, $\phi \in \Ort(H)$, $a \in \Ort(n)$ and defined such that a curve $\phi(t)$ in $\Ort(H)$ is tangent to $E$ if and only if the frame is $\rnabla$-parallel along $p(\phi(t))$. For any $u = (u_1, \dots, u_n) \in \mathbb{R}^n$, define $\hat{A}_u$ as the vector field on $\Ort(H)$ taking values in $E$ uniquely determined by the property
$$p_* \hat{A}_u(\phi) = \sum_{j=1}^n u_j \phi_j , \quad \text{ for any } \phi = (\phi_1, \dots, \phi_n) \in \Ort(H).$$

For any $\phi \in \Ort(H)_x$, define $\hat{F}_{\phi}$ as all points that can be reached from $\phi$ by an $E$-horizontal lift of a curve in $F_x$ starting in $x$. We then have the following result, found in \cite{Elw14}, see also \cite[Chapter~10]{Ton97} and \cite{Mol73}. 
\begin{lemma}
The collection $\hat{\calF} = \{ \hat{F}_\phi\colon \phi \in \Ort(H)\}$ gives a foliation of $\Ort(H)$ with $\nu$-dimensional leaves such that for each $\phi \in O(n)$ the map $$p|{\hat{F}_\phi} \colon \hat{F}_\phi \to F_{p(\phi)}$$ is a cover map. Furthermore, giving each leaf of $\hat{\calF}$ a Riemannian structure by pulling back the metric from the leaves of $\calF$, then for any $u \in \mathbb{R}^n$ and $t \in  \mathbb{R}$, the flow $\Psi_u(t) = e^{t\hat{A}_u}$ maps $\hat{F}_\phi$ onto $\hat{F}_{\Psi_u(t)(\phi)}$ isometrically for each $\phi \in \Ort(H)$.
\end{lemma}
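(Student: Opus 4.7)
My plan is to split the statement into three parts -- Frobenius integrability giving the foliation, the covering property, and the isometric action of $\Psi_u(t)$ -- and then exploit the local Riemannian submersion structure from \eqref{Rsubmersion} for the last part. For the Frobenius step I would introduce the rank-$\nu$ subbundle $\hat V \subseteq T\Ort(H)$ of $E$-horizontal lifts of sections of $V$. The essential input is a Bott-vanishing identity for the Bott connection \eqref{rnabla}: for $Z \in \Gamma(V)$ and $A \in \Gamma(H)$, formula \eqref{rnabla} reduces to $\rnabla_Z A = \pr_{H}[Z, A]$, and a one-line Jacobi computation then yields $R^{\rnabla}(Z_1, Z_2) A = 0$ for all $Z_1, Z_2 \in \Gamma(V)$. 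Combined with the standard identity expressing brackets of $E$-horizontal lifts as $[\hat Z_1, \hat Z_2] = \widehat{[Z_1, Z_2]}$ modulo a vertical term proportional to $R^{\rnabla}(Z_1, Z_2)|_{H}$, this shows that $\hat V$ is involutive, and its integral leaves are exactly the sets $\hat F_\phi$ by their defining property.

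For the covering property, the projection $p|_{\hat F_\phi}$ is a local diffeomorphism since $p_* \hat V = V$ and the dimensions match, and the same Bott-vanishing identity used above gives that $E$ is flat over each leaf of $\calF$, so any absolutely continuous curve in $F_{p(\phi)}$ starting at $p(\phi)$ admits a unique $E$-horizontal lift into $\hat F_\phi$ starting at $\phi$. Unique path-lifting for a surjective local diffeomorphism between smooth manifolds then yields the covering property.

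For the isometry statement I would localize to a chart $U$ where the Riemannian submersion $\pi \colon (U, g|U) \to (\check M_U, \check g_U)$ of \eqref{Rsubmersion} is defined. Because $\pi_*$ is an isometry on $H$, one can identify $\Ort(H)|_U$ with the pullback bundle $\pi^* \Ort(\check M_U)$; under this identification $E$ corresponds to the pullback of the Levi-Civita connection of $\check g_U$, the foliation $\hat\calF|_U$ corresponds to fibers of the natural projection $\tilde\pi\colon \pi^* \Ort(\check M_U) \to \Ort(\check M_U)$, and the canonical horizontal vector field $\hat A_u$ corresponds to the pullback of its analogue on $\Ort(\check M_U)$. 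Hence $\Psi_u(t)$ commutes with $\tilde\pi$ and maps fibers to fibers, so it sends leaves of $\hat\calF$ to leaves. Restricted to a single fiber of $\tilde\pi$, $\Psi_u(t)$ acts precisely as horizontal-lift holonomy of a curve in $\check M_U$ between the corresponding fibers of $\pi$, and by the classical fact that horizontal-lift holonomy for a Riemannian submersion with totally geodesic fibers is an isometry between fibers, the pulled-back leaf metric is preserved.

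The main technical obstacle I expect is setting up cleanly the local identification $\Ort(H)|_U \cong \pi^* \Ort(\check M_U)$ together with the identification of $E$ with the pullback of the Levi-Civita connection of $\check g_U$; the remaining issue of globalizing from the local description is then a matter of checking naturality under overlaps of such charts. Once that identification is in place, both the covering property and the isometry statement reduce to standard facts about Riemannian submersions with totally geodesic fibers, and the only genuinely new ingredient is the short Bott-vanishing computation driving the integrability.
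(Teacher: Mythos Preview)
The paper does not give its own proof of this lemma; it is quoted from \cite{Elw14} with pointers to \cite[Chapter~10]{Ton97} and \cite{Mol73}. The only comment the paper adds is the sentence immediately following the lemma, singling out the identity $R^{\rnabla}(Z,W)A = 0$ for $Z,W \in \Gamma(V)$ and $A \in \Gamma(H)$ as the reason for working with $\rnabla$ --- which is precisely the ``Bott-vanishing'' computation you put at the centre of your argument.

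Your outline is sound and is essentially the argument one finds in the cited references. A few small points worth tightening. For the covering property you should also record surjectivity and global existence of $E$-horizontal lifts: the leaves of $\calF$ are complete (they are totally geodesic in the complete manifold $(M,g)$), so every curve in $F_{p(\phi)}$ from $p(\phi)$ lifts; together with your local-diffeomorphism and unique path-lifting observations this gives a cover. For the isometry step, your identification $\Ort(H)|_U \cong \pi^* \Ort(\check M_U)$ with $E$ matching the pulled-back Levi-Civita connection is correct; the vertical part of that check reduces to $\rnabla_Z A = \pr_H[Z,A] = 0$ for basic horizontal fields $A$, which is the same Bott identity again. The ``classical fact'' you invoke about horizontal holonomy acting isometrically between fibres is Hermann's theorem, cited elsewhere in the paper as \cite{Her60}. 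Finally, to pass from the local picture to the global statement you also need that the flow $\Psi_u(t)$ is complete: the projected curves $p(\Psi_u(t)(\phi))$ are horizontal geodesics of $(M,g)$, hence defined for all $t$ by completeness.
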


Note that the reason for using the connection $\rnabla$ in the definition of $\hat{\calF}$, is that $R^{\rnabla}(Z,W) A = 0$ for any $Z, W \in \Gamma(V)$ and $A \in \Gamma(H)$.

\begin{proof}[Proof of Proposition~\textup{\ref{prop:LocSym}}]
\begin{enumerate}[\rm (a)]
\item
For any $x \in M$, choose a fixed element $\phi_0$ in $\Ort(H)_x$. Define
$$\mathcal{O}_{\phi_0} = \left\{ \Psi_{u_k}(t_k) \circ  \cdots \circ \Psi_{u_1}(t_1)(\phi) : \, t_j \in \mathbb{R}, u_j \in \mathbb{R}^n,\ k \in \mathbb{N} \right\}.$$
Clearly, by definition, for any $\phi \in \mathcal{O}_{\phi_0}$, there is an isometry $\hat{\Phi}: \hat{F}_{\phi_0} \to \hat{F}_{\phi}$ such that $\hat{\Phi}(\phi_0) = \phi$.

Consider the vector bundle $\hat H = \spn \{ \hat A_u \colon \, u \in \mathbb{R}^n \}$ and define  
\begin{align*}
\Lie_{\phi} \hat H & := \spn \left\{ [B_1, [B_2, \cdots, [B_{k-1}, B_k]] \cdots] \big|_{\phi} : \, B_j \in \Gamma(\hat H),\ k \in \mathbb{R} \right\} \\
& \, = \spn \left\{ [\hat{A}_{u_1}, [\hat{A}_{u_2}, \cdots, [\hat{A}_{u_{k-1}}, \hat{A}_{u_k}]] \cdots] \big|_{\phi}: \, u_j \in \mathbb{R}^n,\ k \in \mathbb{R} \right\},
\end{align*}
for any $\phi \in \Ort(H)$.
By the Orbit Theorem, see e.g.~\cite[Chapter~5]{AgSa04}, $\mathcal{O}_{\phi_0}$ is an immersed submanifold of $\Ort(H)$, and furthermore,
$$\Lie_{\phi} \hat H \subseteq T_{\phi} \mathcal{O}_{\phi_0}, \quad \text{ for any } \phi \in \mathcal{O}_{\phi_0}.$$
Furthermore, since $p_* \hat H = H$ and since $H$ is bracket-generating, we have that $p_* \Lie_{\phi} \hat H = T_{p( \phi)} M$. It follows that $p(\mathcal{O}_{\phi_0}) = M$. Hence, for any $y \in M$, there is an isometry $\hat \Phi: \hat{F}_{\phi_0} \to \hat{F}_\phi$ with $\hat \Phi(\phi_0) = \phi$ for some $\phi \in \Ort(H)_y$. As a consequence, there is a local isometry $\Phi$ taking $x$ to $y$.

\item Note that if $R^{g}$ is the curvature of the Levi-Civita connection, then
\begin{align*}
\langle R^{g}(Z_1, Z_2) B_1, B_2 \rangle_{g} & = \langle R^{\nabla}(Z_1, Z_2) B_1, B_2 \rangle_{g} + \frac{1}{2}  (\nabla_{Z_1} \zeta)(Z_2, B_1, B_2)  \\
& \quad - \frac{1}{2} (\nabla_{Z_2} \zeta)(Z_1, B_1, B_2) - \frac{1}{2}\langle  T(Z_1, Z_2), T(B_1, B_2) \rangle_{g} \\
& \quad - \frac{1}{4} \langle T(Z_1, B_2), T(Z_2, B_1) \rangle + \frac{1}{4} \langle T(Z_1, B_1), T(Z_2, B_2) \rangle
\end{align*}
for $Z_j$, $B_j \in \Gamma(TM)$. Since all the leaves of the foliation are totally geodesic, if~$R^{\calF}$ denotes the curvature tensor along the leaves, then $\langle R^{g}(Z_1, Z_2) B_1, B_2 \rangle = \langle R^{\calF}(Z_1, Z_2) B_1, B_2 \rangle$ whenever all vector fields take values in $V$. We compute using any local orthonormal bases $A_1, \dots, A_n$ and $Z_1, \dots, Z_\nu$ of $H$ and $V$, respectively,
\begin{align*}
\quad \Ric_{g}(v,v) &= \sum_{i=1}^n \langle R^{g}(A_i, v)v , A_i \rangle_{g} + \sum_{s=1}^\nu \langle R^{g}(Z_s, \pr_{H} v) \pr_{H} v , Z_s \rangle_{g} \\
& \quad + 2\sum_{s=1}^\nu \langle R^{g}(Z_s, \pr_{V} v) \pr_{H} v , Z_s \rangle_{g} + \sum_{s=1}^\nu \langle R^{g}(Z_s, \pr_{V} v) \pr_{V} v , Z_s \rangle_{g} \\
&= \Ric(\nabla)(\flat v)(v)  + \frac{1}{2} \sum_{i=1}^n \| \calR(A_i, v) \|_{g}^2  + \Ric_{\calF}(\pr_{V} v, \pr_{V} v).
\end{align*}
The result now follows from (a).\qedhere
\end{enumerate}
\end{proof}

\begin{remark} \label{re:geometric}
\begin{enumerate}[\rm (a)]
\item Let $g$ be any metric taming $g_H$ such that $\II = 0$. Write $V$ for the orthogonal complement of $H$. Then for any $\ve >0$, the scaled Riemannian metric
$$g_\ve(v,w) = g(\pr_{H} v, \pr_{H}) + \frac{1}{\ve} g(\pr_{V} v, \pr_{V} w),$$
also tames $g_H$ and satisfies $\II = 0$. Since $\rnabla_A B$ is independent of $g|{V}$ whenever at least one of the vector fields takes values only in $H$, it behaves better with respect to the scaled metric. Such scalings of the extended metric are important for the sub-Riemannian curvature-dimension inequality, see \cite{BaGa17,BaBo12,BBG14,BKW16,GrTh14a,GrTh14b}.
\item If $\bar{\calR} = 0$ then we have that $ \tr_H ( \nabla_\times \calR)(\times, \blank) =   \tr_H( \rnabla_\times \calR)(\times, \blank)$. If this map vanishes, i.e.~if $\Ric(\nabla)$ is a symmetric operator, then $H$ is said to satisfy \emph{the Yang-Mills condition}. One may consider subbundles $H$ satisfying this condition as locally minimizing the curvature $\calR$. See \cite[Appendix A.4]{GrTh14b} for details. \end{enumerate}
\end{remark}

\subsubsection{Regular foliations}
We will give a short remark on the case in Section~\ref{sec:Foliations} when the foliation is also regular, i.e. when there is a global Riemannian submersion $\pi: (M, g) \to (\check{M}, \check{g})$ with the foliation $\calF = \{ F_y = \pi^{-1}(y) \, : \, y \in \check{M} \}$. We can rewrite \eqref{RicGRic} as
\begin{align*} \Ric_{g}( v,v)  &= \Ric(\rnabla)(\flat v)v- \frac{3}{2} \| \calR(v, \blank) \|^2_{g^{*\otimes 2}} + \Ric_\calF(\pr_{V} v, \pr_{V} v)  .
\end{align*}
Hence, requiring that $\Ric(\rnabla)$ is bounded from below is even weaker than requiring this for $\Ric_{g}$. This weaker condition is a sufficient requirement for infinite lifetime for the case of regular foliations.

To prove this, we need a result in \cite{Her60}. Fix a point $y_0\in \check{M}$ and let $\sigma: [0,1] \to \check{M}$ be a smooth curve with $\sigma(0) = y_0$. Define $F = F_{y_0}$ and write $\sigma^x$ for the $H$-horizontal lift of $\sigma$ starting at $x \in F$. Then the map
$$\Psi_{\sigma(t)}: F \to F_{\sigma(t)}, \quad \Psi_{\sigma(t)}(x) = \sigma^x(t) ,$$
is an isometry, so all leaves of $\calF$ are isometric. Write $G$ for the isometry group of~$F$ and $Q_y$ for the space of isometries $q:F \to F_y$. Then $Q = \coprod_{y \in \check{M}} Q_y$ can be given a structure of a principal bundle, such that
$$p: Q \times F \to M \cong (Q \times F)/G, \quad (q, z)  \mapsto q(z).$$
In the above formula, $\phi \in G$ acts on $F$ on the right by $z \cdot \phi = \phi^{-1}(z)$. Finally, if we define
$$E = \left\{  \frac{d}{dt} \Psi_{\sigma(t)} \circ \phi \, : \, \begin{array}{c} \sigma \in C^\infty([0,1],\check{M}) \\
\sigma(0) = y_0, \, \,  \phi \in G, \, \, t \in [0,1] \end{array}  \right\} \subseteq TQ,$$
then $E$ is a principal connection on $Q$ and $p_* E = H$.

One can verify that if $Y_t(y)$ is the Brownian motion in $\check{M}$ starting at $y \in \check{M}$ with horizontal lift $\tilde Y_t(q)$ to $q \in Q_y$ with respect to $E$, then $X_t(x) =p(\tilde Y_t(q), z)$ is a diffusion in $M$ with infinitesimal generator $\frac{1}{2} \Delta_{H}$ starting at $x = p(q,z)$. Hence, if $Y_t(y)$ has infinite lifetime, so does $X_t(x)$ as a process and its horizontal lifts to principal bundles have the same lifetime by \cite{Shi82}. Since a lower bound of $\Ric(\rnabla)$ is equivalent to a lower bound of the Ricci curvature of $\check{M}$ by \cite[Section~2]{GrTh14a}, this is a sufficient condition for infinite lifetime of $X_t(x)$.

The above argument does not depend on $H$ being bracket-generating. However, in the case of $H$ bracket-generating, $F$ is a homogeneous space by a similar argument to that of the proof of Proposition~\ref{prop:LocSym}.

\subsubsection{A counter-example} We will give an example showing that the assumption $\bar{\calR} =0$ is essential for the conclusion of Proposition~\ref{prop:LocSym}.

\begin{example} Consider $\mathfrak{su}(2)$ as the Lie algebra spanned by elements $A_1$, $A_2$ and $A_3$ with bracket-relations
$$[A_1, A_2] = A_3, \quad [A_3, A_1] = A_2, \quad [A_2, A_3] = A_1.$$
Let $G$ be any connected Lie group with Lie algebra $\mathfrak{su}(2)$. Denote the left invariant vector fields and their corresponding elements in the Lie algebra by the same symbol. Let $\varphi: G \to \tilde G$ be a Lie group isomorphism to another copy $\tilde G$ of $G$. Use this to define vector fields on $G \times \tilde G$ by
$$A_k^s(x, y) =A_k(x) + \varphi_* A_k(\varphi^{-1}(y)), \quad A_k^a(x, y) =A_k(x) - \varphi_* A_k(\varphi^{-1}(y)),$$
for any $(x,y) \in G \times \tilde G$. Observe that $[A_i, A_j] = A_k$ implies that
$$[A_i^s, A_j^s] = A_k^s, \quad [A_i^s, A_j^a] = A_k^a, \quad [A_i^a, A_j^a] = A_k^s.$$

Consider $\mathbb{R}$ with coordinate $c$. Define a manifold $M = G \times \tilde G \times \mathbb{R}$. Let $f$ be an arbitrary smooth function on $M$ that factors through the projection to $\mathbb{R}$, i.e.~$f(x,y,c) = f(c)$ for $(x,y, c) \in G \times \tilde G \times \mathbb{R}$. Write $\partial_c f$ simply as~$f'$. Introduce the vector fields $Z_j$, $j = 1,2,3$ on $M$ such that
$$Z_1 = e^f A_1^s, \quad Z_2 = e^f A_2^s, \quad Z_3 = e^f A_1^a.$$
Define a Riemannian metric $g$ on $M$ such that $Z_1$, $Z_2$, $Z_3$, $A_3^s$, $A_2^a$, $A_3^a$, $\partial_c$ form an orthonormal basis. Define a sub-Riemannian manifold $(M, H, g_H)$ such that $H$ is the span of $Z_1$, $Z_2$, $Z_3$ and $\partial_c$ with $g_H$ as the restriction of $g$ to this bundle. If we define~$\II$ and~$\calC$ as in respectively \eqref{II} and \eqref{calC}, then we have $\II =0$ and $\calC = 0$, even though $\bar{\calR}\neq 0$. If $\nabla$ is as in \eqref{MaxRicCurv}, then $\Ric(\nabla)$ is given by

$$\Ric(\nabla) \colon \left\{\begin{aligned}
\flat Z_1 & \mapsto  \left( f''-  e^{2f} (e^{2f} -1)  - 3( f')^2 \right) \flat Z_1, \\
\flat Z_2 & \mapsto \left( f''-  2e^{2f} (e^{2f} -1)  - 3( f')^2 \right) \flat Z_2, \\
\flat Z_3 & \mapsto  \left( f''-  e^{2f} (e^{2f} -1)  - 3( f')^2 \right) \flat Z_3, \\
\flat \partial_c  &\mapsto 2 \left( f''- ( f')^2 \right) \flat \partial_c. 
\end{aligned}\right.$$
%
However, one can also verify that if $\Ric_{g}$ is the Ricci curvature of $g$, then
$$\Ric_{g}(A_2^a, A_2^a) = 2-e^{-f}.$$
Hence, if $f'$ and $f''$ are bounded and $f$ is bounded from above but not from below, then $\Ric(\nabla)$ has a lower bound, but not $\Ric_{g}$. We may for example take $f(c) = - c \tan^{-1}c$. 

%
\end{example}

\section{Torsion and integration by parts} \label{sec:Torsion}
\subsection{Torsion and integration by parts} \label{sec:TorsionRep}
For a function $f \in C^\infty(M)$ on a sub-Riemannian manifold define the horizontal gradient $\nabla^H f = \shh df$. The fact that the parallel transport $\hptr_t$ in Theorem~\ref{th:main} does not preserve the horizontal bundle, makes it difficult to bound $\nabla^H P_t f$ by terms only involving the horizontal part of the gradient of $f$ and not the full gradient. We therefore give the following alternative stochastic representation of the gradient.

Let $(M, g_H^*)$ be a sub-Riemannian manifold and let $\nabla$ be compatible with $g_H^*$. Let $g$ be a Riemannian metric taming $g_H$ and assume that $\nabla$ is compatible with $g$ as well. Introduce a zero order operator
\begin{align} \label{scrA} \mathscr{A}(\alpha) & := \Ric(\nabla)\alpha - \alpha(  \tr_H (\nabla_\times T^\nabla)(\times, \blank)) -\alpha( \tr_H T^\nabla(\times, T^\nabla(\times, \blank))) \\ \nonumber
& = \Ric(\hat{\nabla}) \alpha -\alpha( \tr_H T^\nabla(\times, T^\nabla(\times, \blank))).
\end{align}
Let $X_t(\blank)$ be the stochastic flow of $\frac{1}{2} L(\nabla)$ with explosion time $\tau(\blank)$. Write $\ptr_t = \ptr_t(x): T_xM \to T_{X_t(x)} M$ for parallel transport with respect to $\nabla$ along $X_t(x)$. Observe that this parallel transport along $\nabla$ preserves $H$ and its orthogonal complement. Let $W_t = W_t(x)$ denote the anti-development of $X_t(x)$ with respect to~$\nabla$ which is a Brownian motion in $(H_x, \langle \blank, \blank \rangle_{g_H(x)})$.

\begin{theorem} \label{th:Rep2}
Assume that $\tau(x) = \infty$ a.s. for any $x\in M$~and that for any $f \in C_b^\infty(M)$ with bounded gradient, we have $\sup_{t \in [0, t_1]} \|dP_tf\|_{L^\infty(g^*)} < \infty$. Furthermore, assume that $| T^\nabla |_{\wedge^2 g^* \otimes g} < \infty$ and that $\mathscr{A}$ is bounded from below. Define stochastic processes $Q_t = Q_t(x)$ and $U_t = U_t(x)$ taking values in $\End T_x^*M$ as follows:
$$\frac{d}{dt} Q_t = - \frac{1}{2} Q_t \mathscr{A}_{\ptr_t} \quad Q_0=\id,$$
resp.
$$U_t \alpha(v) =\int_0^t \alpha T^\nabla_{\ptr_s} ( dW_s,  Q_s^\transpose v), \quad T^\nabla_{\ptr_t}(v,w) = \ptr_t^{-1} T(\ptr_t v, \ptr_t w).$$
Then for any $f \in C_b^\infty(M)$, 
\begin{equation} \label{GeneralGrad} dP_t f (x)  = \mathbb{E}\left[ (Q_t + U_t) \ptr_t^{-1} df(X_t(x)) \right] .\end{equation}
\end{theorem}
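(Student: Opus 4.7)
The plan is to establish the identity by showing that the auxiliary process
\[ N_s := (Q_s + U_s)\,\ptr_s^{-1}\, dP_{t-s}f\bigl(X_s(x)\bigr), \quad s \in [0,t], \]
is a martingale on $[0,t]$; since $N_0 = dP_tf(x)$ and $N_t = (Q_t+U_t)\ptr_t^{-1}df(X_t(x))$, the representation \eqref{GeneralGrad} follows from $dP_tf(x) = N_0 = \mathbb{E}[N_t]$.

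First, I would set $u_s = P_{t-s}f$, $\eta_s = du_s$, and $V_s := \ptr_s^{-1}\eta_s(X_s)$. The covariant It\^o formula for a time-dependent $1$-form evaluated along the $\tfrac{1}{2}L(\nabla)$-diffusion, together with the parallel transport $\ptr_s$ for $\nabla$ and the anti-development $W_s$, gives
\[ dV_s = \ptr_s^{-1}\bigl(\partial_s\eta_s + \tfrac{1}{2}L(\nabla)\eta_s\bigr)\bigl|_{X_s}\,ds + \ptr_s^{-1}(\nabla_{\ptr_s dW_s}\eta_s)\bigl|_{X_s}. \]
Substituting $\partial_s\eta_s = -\tfrac{1}{2}dL(\nabla)u_s$ and invoking the sub-Riemannian Weitzenb\"ock identity of Proposition~\ref{prop:Weitz}, namely $dL(\nabla)u_s = L(\hat\nabla)\eta_s - \Ric(\nabla)\eta_s$, the drift becomes $\tfrac{1}{2}\ptr_s^{-1}\bigl([L(\nabla)-L(\hat\nabla)]\eta_s + \Ric(\nabla)\eta_s\bigr)|_{X_s}$.

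Next, I would carry out the explicit commutator computation of $L(\nabla)-L(\hat\nabla)$ on $1$-forms. Using $\hat\nabla_A\alpha = \nabla_A\alpha + \alpha(T^\nabla(A,\cdot))$, expanding $\hat\nabla^2_{A,B}\alpha - \nabla^2_{A,B}\alpha$ and tracing over an orthonormal frame of $H$ (noting that $T^\nabla$ is antisymmetric so that the diagonal terms $T^\nabla(\times,\times)$ vanish), one obtains
\[ L(\hat\nabla)\alpha - L(\nabla)\alpha = 2\tr_H(\nabla_\times\alpha)(T^\nabla(\times,\cdot)) + \alpha\bigl(\tr_H(\nabla_\times T^\nabla)(\times,\cdot)\bigr) + \alpha\bigl(\tr_H T^\nabla(\times,T^\nabla(\times,\cdot))\bigr). \]
Recognising the last two terms via \eqref{scrA} as $\Ric(\nabla)\alpha - \mathscr{A}(\alpha)$, the drift of $V_s$ reduces to
\[ dV_s^{\mathrm{drift}} = \tfrac{1}{2}\mathscr{A}_{\ptr_s}V_s\,ds \;-\; \ptr_s^{-1}\tr_H(\nabla_\times\eta_s)(T^\nabla(\times,\cdot))\bigl|_{X_s}\,ds. \]

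The final step is to verify that $d((Q_s+U_s)V_s)$ has no drift. Applying the It\^o product rule, $d(Q_sV_s) = dQ_s\, V_s + Q_s\, dV_s$ (since $Q_s$ is of bounded variation), and the defining ODE $dQ_s = -\tfrac{1}{2}Q_s\mathscr{A}_{\ptr_s}\,ds$ kills the $\mathscr{A}$-part of the drift of $V_s$, leaving a residual $-Q_s\ptr_s^{-1}\tr_H(\nabla_\times\eta_s)(T^\nabla(\times,\cdot))|_{X_s}\,ds$. The crucial step is then the It\^o cross-variation of $U_s$ with $V_s$: writing the martingale coefficients $\Theta^j_s$ with $(\Theta^j_s\alpha)(v) = \alpha(T^\nabla_{\ptr_s}(e_j,Q_s^\transpose v))$ from the definition of $U_s$, and $\Xi^j_s = \ptr_s^{-1}(\nabla_{\ptr_s e_j}\eta_s)|_{X_s}$ from the martingale part of $V_s$, the contraction $\sum_j \Theta^j_s\Xi^j_s$ evaluates to
\[ \bigl(\textstyle\sum_j\Theta^j_s\Xi^j_s\bigr)(v) = \tr_H(\nabla_\times\eta_s)\bigl(T^\nabla(\times,\ptr_s Q_s^\transpose v)\bigr)\bigl|_{X_s}, \]
which as an element of $T_x^*M$ is precisely $Q_s\ptr_s^{-1}\tr_H(\nabla_\times\eta_s)(T^\nabla(\times,\cdot))|_{X_s}$, cancelling the residual drift. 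All remaining pieces of $dN_s$ are genuine martingale increments. To upgrade from local to true martingale, $\mathscr{A}\ge -K$ yields $\|Q_s\|\le e^{Ks/2}$, the bound on $|T^\nabla|$ controls the quadratic variation of $U_s$ on $[0,t]$, and $\sup_{s\in[0,t]}\|dP_{t-s}f\|_{L^\infty(g^*)}<\infty$ then gives uniform $L^2$-integrability via Burkholder-Davis-Gundy; one concludes $\mathbb{E}[N_t] = N_0$, which is the assertion of the theorem.

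The hardest step is the matching in the drift cancellation: the specific form $T^\nabla_{\ptr_s}(dW_s, Q_s^\transpose v)$ in the definition of $U_s$ has been engineered precisely so that its It\^o co-variation with the gradient martingale of $V_s$ reproduces the first-order torsion term generated by the Weitzenb\"ock commutator $L(\nabla)-L(\hat\nabla)$, revealing the integration-by-parts mechanism that converts the $\hat\nabla$-based representation of Theorem~\ref{th:main} into one involving the horizontal-bundle-preserving parallel transport $\ptr_t$.
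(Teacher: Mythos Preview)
Your proposal is correct and follows essentially the same route as the paper. The paper packages the key commutator identity $L(\nabla)\,df - dL(\nabla)f = -2D^m df + \mathscr{A}(df)$ with $D^m\alpha = \tr_H(\nabla_\times\alpha)(T^\nabla(\times,\cdot))$ into a separate lemma (your inline computation of $L(\hat\nabla)-L(\nabla)$ on $1$-forms is exactly its proof), then sets $N_s = \ptr_s^{-1}dP_{S-s}f(X_s)$ (your $V_s$), computes $d(Q_sN_s)$ and $d[U_s,N_s]$ separately, and observes that the residual first-order torsion drift of $Q_sN_s$ is cancelled by the quadratic covariation $d[U_s,N_s]$, making $(Q_s+U_s)N_s$ a local martingale; the passage to a true martingale is dismissed in one line ``from our assumptions,'' whereas you spell out the Gronwall and BDG estimates explicitly.
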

For a geometric interpretation of $\mathscr{A}$ for different choices of $\nabla$, see Section~\ref{sec:Geometric}. The equality \eqref{GeneralGrad} allows us to choose the connection $\nabla$ convenient for our purposes and gives us a bound for the horizontal gradient on Carnot groups in Section~\ref{sec:Carnot}.

For the proof of this result, we rely on ideas from \cite{DrTh01}. \emph{A multiplication} $m$ of $T^*M$ is a map $m: T^*M \otimes T^*M \to T^*M$. Corresponding to a multiplication and a connection~$\nabla$, we have a corresponding first order operator
$$D^m\alpha = m(\nabla_{\blankdown} \alpha).$$ 
\begin{lemma} \label{lemma:MetricTorsion}
Let $\nabla$ be a connection compatible with $g_H^*$ and with torsion $T$. Define $L = L(\nabla)$, $\Ric = \Ric(\nabla)$ and $T = T^\nabla$. Then for any $f \in C^\infty(M)$,
$$L df - dLf = - 2 D^m df  + \mathscr{A}(df),$$
where $m(\beta \otimes \alpha) = \alpha(T(\shh \beta, \blank))$ and
\begin{equation} \label{scrA2} \mathscr{A}(\alpha) = \Ric(\alpha) - \alpha(  \tr_H (\nabla_\times T)(\times, \blank)) -\alpha( \tr_H T(\times, T(\times, \blank))). \end{equation}
\end{lemma}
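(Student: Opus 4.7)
The plan is to reduce this statement to the sub-Riemannian Weitzenböck identity of Proposition~\ref{prop:Weitz} applied to the adjoint $\hat{\nabla}$, and then to compute the discrepancy $L(\nabla) - L(\hat{\nabla})$ on one-forms in order to extract the first-order term $-2D^{m}$ and the two torsion contributions packaged into $\mathscr{A}$.

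First, since $\hat{\nabla}$ is the adjoint of a connection compatible with $g_H^*$, Proposition~\ref{prop:Weitz} gives $L(\hat{\nabla})\,df - dL(\hat{\nabla})f = \Ric(\nabla)(df)$. Combining this with Lemma~\ref{lemma:Adjoiint}(b), which states that $L(\hat{\nabla})f = L(\nabla)f = Lf$ on functions, I obtain $L(\hat{\nabla})\,df = dLf + \Ric(\nabla)(df)$. Hence the lemma is equivalent to the identity
\begin{equation*}
  \bigl(L(\nabla) - L(\hat{\nabla})\bigr) df
  \;=\; -2 D^{m} df
   - df\!\left(\tr_H (\nabla_\times T)(\times, \blank)\right)
   - df\!\left(\tr_H T(\times, T(\times, \blank))\right) .
\end{equation*}

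Second, dualizing $\hat{\nabla}_A B = \nabla_A B - T(A,B)$ yields, for any one-form $\alpha$, the relation $\hat{\nabla}_A \alpha = \nabla_A \alpha + \tau_A \alpha$ on $T^*M$, where $(\tau_A \alpha)(B) := \alpha(T(A,B))$. Fix $x \in M$ and choose a local horizontal orthonormal frame $A_1, \dots, A_n$ with $\nabla A_i(x) = 0$, which exists because $\nabla$ is compatible with $g_H^*$ and preserves $H$. Since $\hat{\nabla}_{A_i} A_i (x) = \nabla_{A_i} A_i(x) - T(A_i,A_i) = 0$, at the point $x$ both Hessian trace terms $L(\nabla)\alpha$ and $L(\hat{\nabla})\alpha$ reduce to iterated plain derivatives, and direct expansion of $\hat{\nabla}_{A_i}(\nabla_{A_i}\alpha + \tau_{A_i}\alpha)$ at $x$ gives
\begin{equation*}
  L(\hat{\nabla})\alpha - L(\nabla)\alpha
   = \sum_{i=1}^n (\nabla_{A_i}\tau)_{A_i}\alpha
   + 2\sum_{i=1}^n \tau_{A_i}\nabla_{A_i}\alpha
   + \sum_{i=1}^n \tau_{A_i} \tau_{A_i}\alpha .
\end{equation*}

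Third, each of the three sums is identified with a term on the right-hand side of the lemma. Since $\shh \flat A_i = A_i$, one has $m(\flat A_i, \nabla_{A_i}\alpha) = \tau_{A_i}\nabla_{A_i}\alpha$, so the middle sum is precisely $D^{m}\alpha$; together with its coefficient $2$, this accounts for the first-order term. A short tensor calculation, using that $\tau$ is built $C^\infty$-linearly from $T$ and $\alpha$, shows that $(\nabla_C \tau)_A \alpha(B) = \alpha((\nabla_C T)(A,B))$, so the first sum equals $\alpha(\tr_H(\nabla_\times T)(\times, \blank))$; iterating the definition of $\tau$ yields $\tau_{A_i}\tau_{A_i}\alpha(B) = \alpha(T(A_i,T(A_i,B)))$, whose trace over the frame produces the third term in $\mathscr{A}$. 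Substituting back into Step~1 gives the lemma.

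The only delicate step is the tensor bookkeeping in Step~3. The cancellations that reduce $\nabla_C \tau$ to a pure $\nabla_C T$ contribution rely on expanding $\tau(A,\alpha)(B) = \alpha(T(A,B))$ via the Leibniz rule and noting that the $\nabla_C A$, $\nabla_C \alpha$ and $\nabla_C B$ terms cancel against the tensorial covariant-derivative definition; choosing the frame to be $\nabla$-parallel at $x$ is what makes this and the Hessian simplifications work simultaneously.
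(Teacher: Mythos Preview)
Your proof is correct and follows essentially the same route as the paper: both reduce to the Weitzenb\"ock identity $\hat{L}\,df - dLf = \Ric(\nabla)\,df$ from Proposition~\ref{prop:Weitz}, then compute $\hat{L}-L$ on one-forms via the perturbation $\hat{\nabla}_A = \nabla_A + \kappa(A)$ with $\kappa(A)\alpha = \alpha(T(A,\blank))$. The only difference is that the paper cites Lemma~\ref{lemma:Adjoiint}\,(b) for this expansion (noting that $Z^\kappa = \tr_H T(\times,\times) = 0$), whereas you re-derive the same three correction terms directly at a point using a $\nabla$-parallel horizontal frame.
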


\begin{proof}
Recall that if $\hat{\nabla}$ is the adjoint of $\nabla$ and $\hat{L} = L(\hat{\nabla})$, then
$$(\hat{L} df - dL f) = \Ric df.$$
The result now follows from Lemma~\ref{lemma:Adjoiint} and the fact that for any $A \in \Gamma(H)$,
$$\hat{\nabla}_A = \nabla_A + \kappa(A),$$
where $\kappa(A)\alpha = \alpha(T(A, \blank)) = m(\flat A \otimes \alpha)$.
\end{proof}

\begin{proof}[Proof of Theorem~\ref{th:Rep2}]
Let $x \in M$ be fixed. To simplify notation, we shall write $X_t(x)$ simply as $X_t$. Define $\ptr_t$ as parallel transport with respect to~$\nabla$ along~$X_t$. Define $Q_t$ as in Theorem~\ref{th:Rep2}.
For any $S > 0$, consider the stochastic process on $[0,S]$ with values in $T_x^*M$,
$$N_t =\ptr_t^{-1} dP_{S-t} f(X_t).$$
By Lemma~\ref{lemma:MetricTorsion} and It\^o's formula
$$dN_t = \ptr_t^{-1} \nabla_{\ptr_t dW_t} dP_{S-t} f(X_t) - \ptr_t^{-1}D^m d P_{S-t}f(X_t) dt+ \frac{1}{2} \ptr_t^{-1}\mathscr{A}(dP_{S-t}f(X_t)) dt,$$
and so
$$dQ_t N_t = Q_t\ptr_t^{-1} \nabla_{\ptr_t dW_t} dP_{S-t} f(X_t) - Q_t \ptr_t^{-1}D^m d P_{S-t}(X_t) dt.$$
Since $W_t$ is a Brownian motion in $H_x$ and $\ptr_t$ preserves $H$ and its inner product, the differential of the quadratic covariation equals
$$d[ U_t, N_t ]= Q_t \ptr_t^{-1}D^m d P_{S-t}f(X_t) dt.$$
Hence, $(Q_t + U_t) N_t$  is a local martingale which is a true martingale from our assumptions. The result follows.
\end{proof}

\subsection{Geometric interpretation} \label{sec:Geometric}
We will look at some specific examples to interpret Theorem~\ref{th:Rep2} and the zero order operator $\mathscr{A}$ in \eqref{scrA2}.

\subsubsection{Totally geodesic Riemannian foliation and its generalizations}
Assume that condition \eqref{IIzero} holds, so that we are in the case of Section~\ref{sec:IIzero}. Define $\nabla$ as in \eqref{MaxRicCurv} and let $\rnabla$ be as in \eqref{rnabla}. Recall that its torsion $\mathring{T}$ equals $\mathring{T} = - \calR - \bar{\calR}$ and that $\nabla_Z = \rnabla_Z +\mathring{T}^*_{\blankdown} Z$. Using Lemma~\ref{lemma:AS}, we obtain
\begin{align*}
&\langle \mathscr{A} \alpha, \beta \rangle  = \langle \Ric(\nabla) \pr_{H}^* \alpha, \pr_{V}^* \beta \rangle_{g^*} + \langle \Ric(\nabla) \pr_{H}^* \alpha, \pr_{H}^* \beta \rangle_{g^*} \\
& \qquad \qquad \qquad - \alpha( \tr_H (\nabla_\times T)(\times, \sharp \beta)) - \langle \sharp \alpha, \tr_H T(\times, T(\times, \sharp \beta)) \rangle_{g} \\
& = \langle \Ric(\nabla) \pr_{H}^* \alpha, \pr_{H}^* \beta \rangle_{g^*}  +2 \langle \iota_T\alpha, \iota_T \beta \rangle_{\wedge^2 g_H^* } \\
& = \langle \Ric(\nabla) \pr_{H}^* \alpha, \pr_{H}^* \beta \rangle_{g^*}  +2 \langle \iota_{\mathring{T}}\alpha, \iota_{\mathring{T}} \beta \rangle_{\wedge^2 g_H^* }  = \langle \Ric(\rnabla) \pr_{H}^* \alpha, \pr_{H}^* \beta \rangle_{g^*} .
\end{align*}
In particular, $\mathscr{A}$ does not depend on $g|{V}$.

\subsubsection{Lie groups of polynomial growth} \label{sec:LieGroup} 
Let $G$ be a connected Lie group with unit~$\mathbf{1}$ of polynomal growth. 
Consider a subspace $\mathfrak{h}$ that generates all of $\mathfrak{g}$. Equip $\mathfrak{h}$ with an inner product and define a sub-Riemannian structure $(H, g_H)$ by left translation of $\mathfrak{h}$ and its inner product. Let~$g$ be any left invariant metric taming~$g_H$. Let~$\nabla$ be the connection defined such that any left invariant vector field on $G$ is $\nabla$-parallel. Then $\nabla$ is compatible with $g_H^*$ and $g$. 
Let $X_t(\blank)$ be the stochastic flow of $\frac{1}{2} L(\nabla)$, which has infinite lifetime by \cite{HaLe86}. Furthermore, $\| d P_t f \|_{L^\infty(g^*)} < \infty$ for any bounded $f \in C_b^\infty(M)$ by \cite{tEl03}. Hence we can use Theorem~\ref{th:Rep2}.

Let $l_x:G \to G$ denote left multiplication on $G$ and write $x \cdot v := dl_x v$. Notice that since we have a left invariant system, $X_t(x) = x \cdot X_t(\mathbf{1}) =: x \cdot X_t$. Furthermore, parallel transport with respect to $\nabla$ is simply left translation so
$$\ptr_t(x) v = (x \cdot X_t \cdot x^{-1}) \cdot v. $$
If $W_t(x)$ is the anti-development of $X_t(x)$ with respect to $\nabla$ then 
$$W_t(x) = x \cdot W_t(\mathbf{1}) =: x \cdot W_t.$$
As $\nabla$ is a flat connection and since
$$T^\nabla(A_1, A_2) = - [A_1, A_2],$$
for any pair of left invariant vector fields $A_1$ and $A_2$, we have that $\mathscr{A}$ in \eqref{scrA2} equals
$$\mathscr{A} = - \alpha (\tr_H T(\times, T(\times, \blank))).$$
In other words, if we define a map $\psi: \mathfrak{g} \to \mathfrak{g}$, by
\begin{equation} \label{psi} \psi = \tr_{H_{\mathbf{1}}} \ad(\times) \ad(\times),\end{equation}
then
$$\mathscr{A}\alpha = - l^*_{x^{-1}} \psi^* l_x^*\alpha, \quad \alpha \in T_x^*M.$$
Both $\mathscr{A}$ and $T^\nabla$ are bounded in $g$. Hence, we can conclude that for any $v \in \mathfrak{g}$ and $x \in G$,
$$dP_t f(x \cdot v) = \mathbb{E}\left[df\left((x \cdot X_t) \cdot \left(  Q_t^\transpose v +  \int_0^t \ad(Q_s^\transpose v) dW_s \right)\right) \right]$$
where $$ Q_t = \exp\left(- t\psi^*/2 \right).$$
Note that $Q_t$ is deterministic in this case.

\subsection{Carnot groups and a gradient bound} \label{sec:Carnot}
 Let $G$ be a simply connected nilpotent Lie group with Lie algebra $\mathfrak{g}$ and identity $\mathbf{1}$. Assume that there exists a stratification $\mathfrak{g} = \mathfrak{g}_1 \oplus \cdots \oplus \mathfrak{g}_k$ into subspaces, each of strictly positive dimension, such that $[\mathfrak{g}_1, \mathfrak{g}_j] = \mathfrak{g}_{1+j}$ for any $j \geq 1$ with convention $\mathfrak{g}_{k+1} = 0$. Write $\mathfrak{h} = \mathfrak{g}_1$ and choose an inner product on this vector space. Define the sub-Riemannian structure $(H, g_H)$ on $G$ by left translation of $\mathfrak{h}$ and its inner product. Then $(G,H, g_H)$ is called \emph{a Carnot group of step $k$}. Carnot groups are important as they are the analogue of Euclidean space in Riemannian geometry, in the sense that any sub-Riemannian manifold has a Carnot group as its metric tangent cone at points where the horizontal bundle is equiregular. See \cite{Bel96} for details and the definition of equiregular.

Let $(G, H, g_H)$ be a Carnot group with $n = \mathrm{rank} \, H$.
Let $\Delta_{H}$ be defined with respect to left Haar measure on $G$, which equals the right Haar measure since nilpotent groups are unimodular. Consider the commutator ideal $\mathfrak{k} = [ \mathfrak{g}, \mathfrak{g}]= \mathfrak{g}_2 \oplus \cdots \oplus \mathfrak{g}_k$ with corresponding normal subgroup $K$. Define the corresponding quotient map
$$\pi: G \to G/K \cong \mathfrak{h},$$
and write $|\pi|\colon x \mapsto | \pi(x) |_{g_H(\mathbf{1})}$.

It is known from \cite{DrMe05} and \cite{Mel08} that for each $p \in (1, \infty)$, there exists a constant $C_p$ such that $| \nabla^H P_t f|_{g_H} \leq C_p (P_t |\nabla^H f|_{g_H})^{1/p}$ pointwise for any $f \in C^\infty(M)$. We want to give a more explicit form of constants satisfying this inequality.
\begin{theorem} \label{th:GradientB}
Let $\psi$ be defined as in \eqref{psi} and assume that $\psi|{\mathfrak{h}} = 0$.
Let $p_t(x,y)$ denote the heat kernel of $\Delta_{H}$ and define $\varrho(x) = p_1(\mathbf{1}, x)$. Define a probability measure $\mathbb{P}$ on $M$ by $d\mathbb{P} = \varrho d\mu$.
\begin{enumerate}[\rm (a)]
\item Consider the function $\vartheta(x) = n + | \pi |(x) \cdot |\nabla^H \log \varrho |_{g_H}(x)$ and for any $p \in (1, \infty]$,
\begin{equation} \label{constantC} C_p= \left( \int_G \varrho(y) \cdot  \vartheta^q(y) \, d\mu(y) ,\right)^{1/q}, \quad \frac{1}{p} + \frac{1}{q} = 1.\end{equation}
Then the constants $C_p$ are all finite and for any $x \in G$ and $t \geq 0$, we have
\begin{equation*}
| \nabla^H P_t f |_{g_H}(x) \leq C_p (P_t | \nabla^H f |^p_{g_H}(x))^{1/p}, \quad f\in C^\infty(M).
\end{equation*}
Furthermore, $C_2 < n+ (nQ - 2 \Cov_{\mathbb{P}} [ |\pi|^2, \log \rho])^{1/2}$ where $\Cov_{\mathbb{P}}$ is the covariance is with respect to $\mathbb{P}$.
\item Let $Q$ be \emph{the homogeneous dimension} of $G$,
\begin{equation} \label{HomDim} Q = \sum_{j=1}^k j (\mathrm{rank} \, \mathfrak{g}_j) .\end{equation}
For any $n$ and $q \in [2, \infty)$, define
$$c_{n,q} = \left(\frac{2^{(q+n+1)/2} \pi^{(n-1)/2} }{ \sqrt{n} } \frac{\Gamma( \frac{n+ q}{2})}{\Gamma(\frac{n}{2})} \right)^{1/q}.$$
Then for $p \in (2, \infty)$, we have
$$|\nabla^H P_tf| \leq (n+  c_{n,q} \sqrt{Q}) \left(  P_t |df|^p\right)^{1/p}, \quad \frac{1}{q} + \frac{1}{p} = \frac{1}{2}. $$
\end{enumerate}
\end{theorem}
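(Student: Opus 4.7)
The plan is to combine the stochastic representation from Section~\ref{sec:LieGroup} with an integration by parts against the heat kernel, using the dilation structure of the Carnot group to obtain scale-invariant constants. First, the scaling relations $\varrho_t(y)=t^{-Q/2}\varrho(\delta_{1/\sqrt t}y)$, $|\pi|(y)=\sqrt t\,|\pi|(\delta_{1/\sqrt t}y)$ and $|\nabla^H\log\varrho_t|(y)=t^{-1/2}|\nabla^H\log\varrho|(\delta_{1/\sqrt t}y)$, together with a change of variables, show that the integrals defining $C_p$ in (a) and $c_{n,q}$ in (b) are independent of $t>0$, so it is enough to prove both bounds at $t=1$. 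At $t=1$ the hypothesis $\psi|_\mathfrak{h}=0$ forces $Q_t^\transpose$ to act as the identity on $\mathfrak{h}$, so the stochastic integral in the representation from Section~\ref{sec:LieGroup} collapses to $\ad(v)W_1=[v,W_1]$ and
\begin{equation*}
 dP_1 f(x\cdot v) \;=\; \mathbb{E}\bigl[(v+[v,W_1])f(x\cdot X_1)\bigr],\qquad v\in\mathfrak{h}.
\end{equation*}
Because the projection $\pi\colon G\to G/K\cong\mathfrak{h}$ is a homomorphism onto an abelian target, applying $\pi$ to the SDE for $X_t$ gives $W_t=\pi(X_t)$ almost surely, so the expectation becomes an integral against the marginal density $\varrho$ of $X_1$.

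The key step is then an integration by parts. Decomposing $[A_k,\pi(y)]=\sum_i\pi_i(y)[A_k,A_i]$ with $[A_k,A_i]\in\mathfrak{g}_2$ acting as a left-invariant vector field, using unimodularity of $G$ to integrate by parts in each of $A_k$ and $A_i$, and invoking the two identities $A_j\pi_i=\delta_{ij}$ and $[A_k,A_i]\pi_i=0$ (both consequences of $\pi$ being a homomorphism into the abelianization), a direct computation converts the first-order term and the commutator term into
\begin{equation*}
 A_k P_1 f(x) \;=\; \int_G\varrho(y)\,\bigl\langle\nabla^H f(xy),\,xy\cdot\Xi(y,A_k)\bigr\rangle_{g_H}\,d\mu(y),
\end{equation*}
with $\Xi(y,A_k)=nA_k+\langle\pi,\nabla^H\log\varrho\rangle A_k-(A_k\log\varrho)\,\pi$. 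Writing the last two terms as $\lambda A_k-\mu\pi$ with $\lambda=\langle\pi,\nabla^H\log\varrho\rangle$ and $\mu=\langle A_k,\nabla^H\log\varrho\rangle$, the identity $(\lambda A_k-\mu\pi)\cdot h=\langle\pi\wedge A_k,\nabla^H\log\varrho\wedge h\rangle$ and Cauchy--Schwarz on $\bigwedge^2\mathfrak{h}$ give $|\Xi(y,A_k)|_{g_H}\le n+|\pi||\nabla^H\log\varrho|=\vartheta(y)$. Part~(a) then follows at once from H\"older with exponents $(p,q)$ applied to $\int|\nabla^H f(xy)|\vartheta(y)\,d\mathbb{P}(y)$; finiteness of $C_p$ comes from Gaussian decay of $\varrho$ versus the polynomial growth of $\vartheta$ furnished by Li--Yau type estimates for sub-Riemannian heat kernels.

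For the sharper $C_2$ estimate, Minkowski yields the strict inequality $C_2<n+(\int|\pi|^2|\nabla^H\log\varrho|^2\varrho\,d\mu)^{1/2}$. Combining $\Delta_H|\pi|^2=2n$, $\mathbb{E}[|\pi|^2]=\mathbb{E}[|W_1|^2]=n$, and differentiating the dilation identity $\int|\pi|^2\varrho_t\log\varrho_t\,d\mu = t(\mathbb{E}[|\pi|^2\log\varrho]-\tfrac{Q}{2}n\log t)$ at $t=1$, then equating with the time-derivative $\tfrac12\int|\pi|^2\Delta_H\varrho(1+\log\varrho)\,d\mu$ (after a further integration by parts), reduces the right-hand integral to exactly $nQ-2\Cov_\mathbb{P}[|\pi|^2,\log\varrho]$. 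For part~(b) one replaces the single H\"older step by the splitting $\vartheta\varrho=n\varrho+|\pi||\nabla^H\varrho|$. The first piece gives $nP_1|\nabla^H f|(x)$, bounded by Jensen by $n(P_1|\nabla^H f|^p)^{1/p}$. For the second, Cauchy--Schwarz together with the Fisher information identity $\int|\nabla^H\log\varrho|^2\varrho\,d\mu=Q$ (obtained from the de~Bruijn relation $\partial_t H(\varrho_t)=\tfrac12 I(\varrho_t)$ and $H(\varrho_t)=H(\varrho)+(Q/2)\log t$) yields $\sqrt Q\,(\mathbb{E}[|\nabla^H f(xX_1)|^2|\pi(X_1)|^2])^{1/2}$; a final H\"older with exponents $p/2$ and $p/(p-2)$ separates the two factors and reduces the $\pi$-moment to the Gaussian expectation $(\mathbb{E}|W_1|^q)^{1/q}$, whose $\Gamma$-function evaluation produces the constant $c_{n,q}$ announced in the statement.

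The main obstacle is the integration by parts producing $\Xi(y,A_k)$: one has to carry out the differentiation of the products $\pi_i\varrho$ by horizontal vector fields, invoke both $A_j\pi_i=\delta_{ij}$ and $[A_k,A_i]\pi_i=0$ to collapse the cross terms, and simultaneously ensure that only $\nabla^H f$ (not the full $df$) survives on the right-hand side. Once that identity is secured, everything else reduces to choosing the right H\"older/Cauchy--Schwarz decomposition and the Fisher-information and Gaussian-moment calculations sketched above.
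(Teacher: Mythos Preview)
Your proposal is correct and follows essentially the same route as the paper: the stochastic representation collapsing to $v+[v,W_t]$ via $\psi|_{\mathfrak h}=0$ and $W_t=\pi(X_t)$, the integration by parts converting the $\mathfrak g_2$-derivative into the horizontal kernel $\Xi$ (the paper does this coordinate-free via the operator $F_H$ and its adjoint rather than splitting $[A_k,A_i]=A_kA_i-A_iA_k$, but it is the same computation), the wedge/Cauchy--Schwarz bound $|\Xi|\le\vartheta$, and then H\"older together with the dilation identities for the $C_2$ and Fisher-information calculations. The only imprecision is the phrase ``polynomial growth of $\vartheta$ furnished by Li--Yau type estimates'': the paper instead uses the Gaussian upper bound on $1/\varrho$ from Varopoulos and the Gaussian bound on $|d\varrho|_{g_H^*}$ from Varopoulos--Saloff-Coste--Coulhon, balancing the exponents so that $\varrho^{1-q}|d\varrho|^q$ remains integrable; your sketch should cite those bounds rather than a generic Li--Yau estimate.
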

The condition $\psi|\mathfrak{h} = 0$ is actually equal to the Yang-Mills condition in the case of Carnot groups, see Remark~\ref{re:YMCarnot}.
In the definition of $\varrho$, the choices of $t=1$ and $x = \mathbf{1}$ in the definition are arbitrary. For any fixed $t$ and $x$, if we replace $\varrho$ by $\varrho_{t,x}(y) := p_t(x,y)$ in \eqref{constantC}, we would still obtain the same bounds. Taking into account \cite[Cor~3.17]{Mel08}, we get the following immediate corollary.
\begin{corollary}
For any smooth function $f \in C^\infty(G)$ and $t \geq 0$, we have
$$P_t f^2 - (P_t f)^2 \leq t \, C^2_2 P_t | \nabla^H f |_{g_H}^2$$
with $C_2$ as in~Eq.~\eqref{constantC}.
\end{corollary}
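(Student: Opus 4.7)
The plan is to derive this Poincaré-type inequality from the pointwise gradient bound of Theorem~\ref{th:GradientB} with $p=2$ by the standard interpolation along the heat flow, as in the Bakry--Émery framework. Since $G$ is a (unimodular) Carnot group and $\Delta_H$ is defined with respect to Haar measure, the semigroup $P_t$ is symmetric in $L^2(\mu)$ and has infinitesimal generator $\frac{1}{2}\Delta_H$ (in the convention fixed at the start of the paper). The carré du champ associated with $\Delta_H$ satisfies $\Gamma(f,f) = \frac{1}{2}(\Delta_H f^2 - 2 f\, \Delta_H f) = |\nabla^H f|^2_{g_H}$.

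For fixed $t>0$ and $x\in G$, I would introduce the interpolating function
\begin{equation*}
\phi(s) = P_s\bigl((P_{t-s}f)^2\bigr)(x), \qquad s\in[0,t],
\end{equation*}
so that $\phi(0) = (P_t f)^2(x)$ and $\phi(t) = P_t f^2(x)$. Differentiating and using that the generator is $\tfrac12\Delta_H$,
\begin{equation*}
\phi'(s) = P_s\!\left(\tfrac12 \Delta_H (P_{t-s}f)^2 - P_{t-s}f\cdot \Delta_H P_{t-s}f\right)(x) = P_s\bigl(|\nabla^H P_{t-s}f|_{g_H}^2\bigr)(x).
\end{equation*}
Hence, by the fundamental theorem of calculus,
\begin{equation*}
P_t f^2(x) - (P_t f)^2(x) = \int_0^t P_s\bigl(|\nabla^H P_{t-s} f|_{g_H}^2\bigr)(x)\, ds.
\end{equation*}

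Applying Theorem~\ref{th:GradientB}(a) with $p=2$ pointwise gives the inequality $|\nabla^H P_{t-s} f|_{g_H}^2 \le C_2^2\, P_{t-s}\bigl(|\nabla^H f|_{g_H}^2\bigr)$. Plugging this into the integrand, using positivity of $P_s$, and invoking the semigroup property $P_s P_{t-s} = P_t$, we obtain
\begin{equation*}
P_t f^2(x) - (P_t f)^2(x) \le C_2^2\int_0^t P_s P_{t-s}\bigl(|\nabla^H f|_{g_H}^2\bigr)(x)\, ds = t\, C_2^2\, P_t\bigl(|\nabla^H f|_{g_H}^2\bigr)(x),
\end{equation*}
which is the claimed estimate.

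The only technical point to check is that the differentiation under $P_s$ is justified and that $\phi$ is genuinely absolutely continuous in $s$; this is routine for smooth $f$ with sufficiently controlled growth (one can first establish the identity for $f\in C_c^\infty(G)$ using the hypoellipticity and smoothing properties of the sub-elliptic heat semigroup on Carnot groups, and then pass to the general case by density, noting that both sides of the final inequality are well-defined for smooth $f$). I do not anticipate a genuine obstacle here since the underlying gradient bound has already been established in Theorem~\ref{th:GradientB}; the corollary is the standard one-line integration argument (which is precisely why the reference \cite[Cor.~3.17]{Mel08} can be invoked for the template).
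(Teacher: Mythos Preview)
Your proof is correct and is exactly the standard interpolation argument: the paper does not give its own proof of this corollary but simply invokes \cite[Cor.~3.17]{Mel08}, which encodes precisely the fact that the pointwise $L^2$-gradient bound $|\nabla^H P_t f|_{g_H}^2 \le C_2^2 P_t|\nabla^H f|_{g_H}^2$ implies the local Poincar\'e inequality via the computation you wrote out. So your argument matches what the paper intends.
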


Let $g$ be a left invariant metric on $G$ taming $g_H$. Let $\nabla$ be the connection on $M$ defined such that all left invariant vector fields are parallel. As
$$\beta(v) = \tr T^\nabla(v, \blank) = 0,$$
we have that $L(\nabla)^* = L(\nabla)$ by Lemma~\ref{lemma:DualSL}. Furthermore, if $A_1, \dots, A_n$ is a basis of $\mathfrak{g}$, then $L(\nabla)f = \sum_{i=1}^n A_i^2 f$ by \cite{ABGR09}. Before passing to our desired inequalities, we review some facts about Carnot groups.

Let $X_t := X_t(\mathbf{1})$  be a $\frac{1}{2} \Delta_{H}$-diffusion starting at the identity~$\mathbf{1}$ and let $\ptr_t$ denote the corresponding parallel transport along $X_t$ with respect to $\nabla$. Let $\pi\colon G \to \mathfrak{h}$ denote the quotient map.
\begin{enumerate}[\rm (i)]
\item For any $v, w \in H$ we have $\langle v , w \rangle_{g_H} = \langle \pi_* v, \pi_* w \rangle_{g_H(\mathbf{1})}$. Hence we can consider our sub-Riemannian structure as having been obtained by choosing a principal Ehresmann connection $H$ on $\pi$ and lifting the metric on~$\mathfrak{h}$. It follows by \cite[Section~2]{GrTh14a} that $\Delta_{H}$ is the horizontal lift of the Laplacian of $(\mathfrak{h}, \langle \blank, \blank \rangle_{g_H(\mathbf{1})})$ and so we have that $W_t = \pi(X_t)$ is a Brownian motion in the inner product space~$\mathfrak{h}$. Since
$$\pi_* v = \pr_{\mathfrak{h}} x^{-1} \cdot v, \quad v \in T_x G,$$
we can identify $W_t$ with the anti-development of $X_t$.
\item Since $\Delta_{H}$ is left invariant, $X_t(x) := x \cdot X_t$ is a $\frac{1}{2} \Delta_{H}$-diffusion starting at $x$, and $P_tf(x) = P_t(f \circ l_x)(\mathbf{1})$ where $l_x$ denotes left translation. In particular, if $\varrho_t(x) := p_t(\mathbf{1},x)$ then $$p_t(x,y) = \varrho_t(x^{-1} y) .$$
\item \label{item:Dil} Since the Lie algebra $\mathfrak{g}$ has a stratification, for any $s >0$, the map $(\dilation_s)_*: \mathfrak{g} \mapsto \mathfrak{g}$ is given by
\begin{equation} \label{infHom} (\dilation_s)_*A \in \mathfrak{g}_j \mapsto s^j A.\end{equation}
is a Lie algebra automorphism. It correspond to a Lie group automorphism $\dilation_s$ of $G$ since $G$ is simply connected. These automorphisms are called \emph{dilations}.
It can be verified that if $A \in \mathfrak{g}_j$ and we use the same symbol for the corresponding left invariant vector field then
$$A (f \circ \dilation_s) = s^j (Af) \circ \dilation_s.$$
\item \label{item:Scale} As a consequence of Item~\eqref{item:Dil} we have
$$\Delta_{H} (f \circ \dilation_s) = s^2 (\Delta_{H} f) \circ \dilation_s,$$
and hence
$$P_t (f \circ \dilation_s) = (P_{s^2 t} f) \circ \dilation_s.$$
Also, for any function $f$, we have $| df|_{g_H^*} \circ \dilation_s = s^{-1} | d(f \circ \dilation_s) |_{g_H^*}$.
\item \label{item:heatkernel} 
Let $Q$ be the homogeneous dimension of $G$ as in \eqref{HomDim}. By definition $\dilation^*_s \mu = s^Q \mu$, and considering \eqref{item:Scale} the heat kernel has the behavior
$$\varrho_{s^2 t} (\dilation_s(x)) =s^{-Q} \varrho_t(x).$$
\item \label{item:Rep2} Clearly $R^\nabla =0$ and $\nabla T =0$ since the torsion takes left invariant vector fields to left invariant vector fields. Hence, for any left invariant vector field $A$, we have $\mathscr{A}^\transpose A = \psi A$ with $\psi$ as in \eqref{psi}.
If $\psi|{\mathfrak{h}} = 0$,  we can apply Theorem~\ref{th:Rep2}. We obtain that for any $v \in \mathfrak{h}$, then
$$dP_t f(v) = \mathbb{E}\left[\ptr_t^{-1} df(X_t) \big(v+ \ad(W_t) v\big)\right].$$
\end{enumerate}
Theorem~\ref{th:GradientB} now follows as a result of the next Lemma. Note that for any function $f\in C^\infty(M)$, we have 
$| \nabla^H f|_{g_H} = | df|_{g_H^*}$.
\begin{lemma}
Assume that $\psi |{\mathfrak{h}}  =0$. For every $t > 0$, define
$$\vartheta_t = n + |\pi| | d \log \varrho_t |_{g_H^*}.$$
For any $p \in (1, \infty]$, let $q \in [1, \infty)$ be such that $\frac{1}{p} + \frac{1}{q} = 1$ and define
\begin{equation} \label{constantCtp} C_{t, p} = \mathbb{E}\left[ \vartheta_t(X_t)^q \right]^{1/q}.\end{equation}
Then
\begin{enumerate}[\rm (a)]
\item $C_{t,p} = C_{1,p} = C_p$ for any $t >0$.
\item The constants $C_p$ are finite. Furthermore, we have the inequality
$$C_2 \leq  n+ \left( nQ + 2 \int_G (n-|\pi|^2) \varrho \log \varrho \,  d\mu \right)^{1/2} = n+ (nQ - 2 \Cov_{\mathbb{P}} [ |\pi|^2, \log \rho])^{1/2} .$$
\end{enumerate}
\end{lemma}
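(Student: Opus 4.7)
For part (a), the plan is to exploit the dilation symmetry of the Carnot group. The relation $\varrho_{s^2}(\dilation_s x) = s^{-Q}\varrho_1(x)$ of item~\eqref{item:heatkernel} gives $\log\varrho_{s^2}\circ\dilation_s = \log\varrho_1 - Q\log s$, and together with the scaling formula $|df|_{g_H^*}\circ\dilation_s = s^{-1}|d(f\circ\dilation_s)|_{g_H^*}$ from item~\eqref{item:Scale} this yields $|d\log\varrho_{s^2}|_{g_H^*}(\dilation_s x) = s^{-1}|d\log\varrho_1|_{g_H^*}(x)$. Since $\dilation_s$ preserves $K$ and acts on $\mathfrak{h} = \mathfrak{g}_1$ by scalar multiplication by $s$, we also have $|\pi|(\dilation_s x) = s|\pi|(x)$. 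Combining these gives the pointwise scale-invariance $\vartheta_{s^2}(\dilation_s x) = \vartheta_1(x)$. On the other hand, item~\eqref{item:Scale} applied at $x=\mathbf{1}$ gives $X_t \stackrel{d}{=} \dilation_{\sqrt t}(X_1)$, so
\[ C_{t,p}^q = \mathbb{E}[\vartheta_t(X_t)^q] = \mathbb{E}[\vartheta_t(\dilation_{\sqrt t}X_1)^q] = \mathbb{E}[\vartheta_1(X_1)^q] = C_{1,p}^q. \]

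For part (b), I would first reduce the bound on $C_2$ to a single integral via the triangle inequality in $L^2(\mathbb{P})$:
\[ C_2 = \bigl\|n + |\pi|\,|d\log\varrho|_{g_H^*}\bigr\|_{L^2(\mathbb{P})} \leq n + \Bigl(\int_G |\pi|^2\,|d\log\varrho|_{g_H^*}^2\,\varrho\,d\mu\Bigr)^{1/2}, \]
so it suffices to establish the identity
\begin{equation}\label{eq:key-id}
\int_G |\pi|^2\,|d\log\varrho|_{g_H^*}^2\,\varrho\,d\mu = nQ + 2\int_G (n - |\pi|^2)\,\varrho\log\varrho\,d\mu.
\end{equation}
The strategy is to compute $\Phi'(1)$ in two different ways for the weighted entropy functional $\Phi(t) := \int_G |\pi|^2\,\varrho_t\log\varrho_t\,d\mu$.

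The first computation is direct. Using $\partial_t\varrho_t = \tfrac{1}{2}\Delta_H\varrho_t$, the unimodularity $A_i^* = -A_i$, the identities $A_j\pi_i = \delta_{ij}$ (hence $A_j|\pi|^2 = 2\pi_j$ and $\Delta_H|\pi|^2 = 2n$, which follow from $\pi$ being a homomorphism with $\pi_*|_{\mathfrak{h}} = \id$), and the elementary fact $\langle d\log\varrho_t, d\varrho_t\rangle_{g_H^*} = |d\log\varrho_t|_{g_H^*}^2\,\varrho_t$, a sequence of integrations by parts produces
\[ \Phi'(t) = n\int_G \varrho_t\log\varrho_t\,d\mu - \tfrac{1}{2}\int_G |\pi|^2\,|d\log\varrho_t|_{g_H^*}^2\,\varrho_t\,d\mu. \]
The second computation is via scaling: repeating the change of variables from part (a) for $\Phi$ itself, and using $\int_G|\pi|^2\varrho_1\,d\mu = n$ (which also follows from $\Delta_H|\pi|^2 = 2n$ and the martingale property for $X_t(\mathbf 1)$), yields $\Phi(t) = t\,\Phi(1) - \tfrac{nQ}{2}\,t\log t$, whence $\Phi'(1) = \Phi(1) - \tfrac{nQ}{2}$. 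Equating the two expressions for $\Phi'(1)$ gives exactly~\eqref{eq:key-id}.

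Finiteness of $C_p$ for every $p\in(1,\infty]$ I would deduce from standard Gaussian-type upper bounds on $\varrho_t$ and on $|d\varrho_t|_{g_H^*}$ for Carnot groups (Varopoulos, Saloff-Coste), which imply that $|d\log\varrho|_{g_H^*}(x)$ grows at most polynomially in $\mathsf{d}_{g_H}(\mathbf{1}, x)$; since $|\pi|(x) \leq \mathsf{d}_{g_H}(\mathbf{1}, x)$, the integrand $\vartheta^q\varrho$ is dominated by a polynomial times a Gaussian in the sub-Riemannian distance, which is integrable against Haar measure.

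The main obstacle is the direct computation of $\Phi'(t)$: several rounds of integration by parts produce cross-terms that must cancel in exactly the right way, but everything is ultimately driven by the single non-trivial identity $\Delta_H|\pi|^2 = 2n$. Once this is in place, the scaling half is essentially bookkeeping around the factor $\varrho_t = t^{-Q/2}\varrho_1\circ\dilation_{1/\sqrt t}$, and the Minkowski step then closes part (b).
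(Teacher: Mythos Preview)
Your argument for part~(a) is the same dilation computation as the paper's, and your route to the key identity~\eqref{eq:key-id} for the $C_2$ bound is correct but genuinely different from the paper's. The paper works at the fixed time $t=1$: it writes $\varrho|d\log\varrho|^2_{g_H^*}=\Delta_H(\varrho\log\varrho)-(\log\varrho+1)\Delta_H\varrho$, replaces $\Delta_H\varrho$ using the scaling identity $(\Delta_H+D+Q)\varrho=0$ for the dilation vector field $D$, and then integrates by parts against $r=|\pi|^2$ using $\Delta_H r=2n$ and $Dr=2r$. Your approach packages the same ingredients differently by differentiating the weighted entropy $\Phi(t)$ in $t$ and matching against its closed scaling form; this avoids introducing $D$ explicitly and is arguably cleaner. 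Both methods rest on the single nontrivial input $\Delta_H|\pi|^2=2n$.

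There is, however, a gap in your finiteness argument. A Gaussian \emph{upper} bound on $\varrho$ does not control $|d\log\varrho|=|d\varrho|/\varrho$; you need a Gaussian \emph{lower} bound on $\varrho$ (equivalently, an upper bound on $1/\varrho$), which is what the paper invokes from Varopoulos. Even with the correct pair of bounds
\[
\varrho(x)^{-1}\le k_\varepsilon\exp\!\Big(\tfrac{\mathsf d(x)^2}{2-\varepsilon}\Big),\qquad |d\varrho|_{g_H^*}(x)\le k_{\varepsilon'}\exp\!\Big(-\tfrac{\mathsf d(x)^2}{2+\varepsilon'}\Big),
\]
the quotient $|d\log\varrho|$ is only bounded by $\exp(c\,\mathsf d^2)$ for some small $c>0$, not by a polynomial. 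What is true, and what the paper actually shows, is that the full integrand $r^{q/2}\varrho^{1-q}|d\varrho|^q$ has Gaussian decay once one chooses $\varepsilon,\varepsilon'$ so that $(q-1)/(2-\varepsilon)<q/(2+\varepsilon')$, which is possible for every $q\in[1,\infty)$. Your conclusion that $\vartheta^q\varrho$ is dominated by a polynomial times a Gaussian is therefore correct, but the intermediate claim about polynomial growth of $|d\log\varrho|$ is not, and the balancing of the two exponential rates is the actual content of the argument.
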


\begin{proof}
To simplify notation, we write $\langle \blank, \blank \rangle_{L^2(\wedge^jg^*)}$ simply as $\langle \blank, \blank \rangle$ and $r = |\pi|^2$.
\begin{enumerate}[\rm (a)]
\item
We use dilations to prove the statement. Observe that $r \circ \dilation_s = s^2 r$ and that $| d \log \varrho_t | \circ \dilation_s= s^{-1} | d \log \varrho_{t/s^2} |$, and so $\vartheta_t  \circ \dilation_s = \vartheta_{t/s^2}$. It follows that
\begin{align*}
(C_{t,p})^q & =  \int_G  \varrho_t \vartheta^q_t \,d\mu  \stackrel{\dilation_{\sqrt{t}}^*}{=}  \int_G  ( \varrho_t \circ \dilation_{\sqrt{t}}) \left( \vartheta_t  \circ \dilation_{\sqrt{t}} \right)^q t^{Q/2} \,d\mu \\
& =  \int_G  \varrho_1 \vartheta_1^q  \,d\mu = (C_p)^q.
\end{align*}

\item We only need to show that for any $1 < q < \infty$,
$$\int_G \varrho (r^{1/2} | d\log \rho |_{g_H^*} )^q d\mu = \int_G r^{q/2} \varrho^{1-q} | d\varrho |_{g_H^*}^q d\mu < \infty .$$

Define $\mathsf{d}(x) = \mathsf{d}_{g_H}(\mathsf{1}, x)$. Then $\pi$ is distance decreasing, so $r(x) \leq \mathsf{d}(x)^2$. By \cite[Theorem~1]{Var90}, for any $0 < \ve < \frac{1}{2}$ there is a constant $k_\ve$ such that
$$\frac{1}{\varrho(x)} \leq k_\ve \exp\left( \frac{\mathsf{d}^2(x)}{2- \ve} \right).$$
Furthermore, by \cite[Theorem~IV.4.2]{VSC92}, for every $\ve^\prime > 0$ there are constants $k_{\ve^
\prime}$ such that
$$| d\varrho |_{g_H^*}(x) \leq k_{\ve^\prime} \exp\left(- \frac{\mathsf{d}^2(x)}{2+ \ve^\prime} \right).$$
Since we can always find the appropriate values of $\ve$ and $\ve^\prime$ such that
$$\frac{q-1}{q} \leq \frac{2-\ve}{2 + \ve^\prime},$$
it follows that $\int_G r^{q/2} \varrho^{1-q} | d\varrho |_{g_H^*}^q d\mu < \infty$.


Next, define the vector field $D$ by $D f= \frac{d}{ds} ( gf \circ \dilation_{1+s})|_{s=0}$ for any function~$f$. If~$f$ satisfies $f \circ \dilation_{\ve} = \ve^k f$, then by definition $Dg = k g.$ By Item~\eqref{item:heatkernel}, we have
$$\mathrm{div} \, D = Q, \quad (t\Delta_{H}  + D + Q) p_t = (t\Delta_{H} - D^*) p_t = 0.$$
The observation
$$\Delta_{H} (\varrho_t \log \varrho_t) = (\log \varrho_t + 1) \Delta_{H} \varrho_t + \varrho_t | \log \varrho_t |_{g_H^*}^2$$
allows us to compute
\begin{align*}
 (C_2 - n)^2 &\leq \langle r, \varrho | \log \varrho |_{g_H^*}^2 \rangle =  \langle r, \Delta_{H} (\varrho \log \varrho) - (\log \varrho + 1) \Delta_{H} \varrho \rangle \\
& = \langle \Delta_H r, \varrho \log \varrho \rangle + \langle r, (\log \varrho + 1)  D\varrho \rangle + Q \langle r, (\log \varrho + 1) \varrho \rangle\\
& = 2n \langle  \varrho, \log \varrho \rangle - \langle Dr, \varrho \log \varrho  \rangle + Q n\\
& = 2 \langle (n-r)  ,\varrho \log \varrho \rangle + Q n\\
\end{align*}
which equals the covariance, since $\int_M r \rho d\mu = n$.\qedhere
\end{enumerate}
\end{proof}

\begin{proof}[Proof of Theorem~\ref{th:GradientB}]
Again, we write $\langle \blank, \blank \rangle_{L^2(\wedge^jg^*)}$ simply as $\langle \blank, \blank \rangle$ and $r = |\pi|^2$.
\begin{enumerate}[\rm (a)]
\item By left invariance, it is sufficient to prove the inequality in the point $x = \mathbf{1}$.
Let $v \in H_{\mathbf{1}} = \mathfrak{h}$ be arbitrary. We will use Theorem~\ref{th:Rep2} and Item~\eqref{item:Rep2}. For every $x \in G$ we have $\sharp dr(x) = 2 x \cdot \pi(x).$ Let us consider the form $\alpha^v$ defined by $\alpha^v (x)= \flat( x \cdot v)$. Then
\begin{align*}
 dP_tf (v) &= \mathbb{E}\left[\ptr_t^{-1} df(X_t) \left(v-\calR(W_t , v)\right)\right] \\
& = \mathbb{E}[\ptr_t^{-1} df(X_t) (v)] -  \mathbb{E}\left[ df(X_t) \calR(\ptr_t (\pi(X_t) \wedge v))\right] \\
& = \mathbb{E}[\ptr_t^{-1} df(X_t) (v)] - \frac{1}{2} \mathbb{E}\left[ df \calR(\sharp dr , \sharp \alpha^v )(X_t) \right]. 
\end{align*} 
Define $F(A,B) = \flat A \wedge \nabla_B$ and extend $F$ to general sections of $TM^{\otimes 2}$ by $C^\infty(M)$-linearity. Consider $F_H = F(g_H^*)$ and notice that
$$F_H f = d_H f = \pr_H^* df, \quad F_H^2 f = df\calR(\, \cdot \, , \, \cdot \,).$$
Hence 
\begin{align*}
 \mathbb{E}\big[ &\langle df\calR(\sharp dr, \sharp \alpha^v)(X_t) \big] = \langle F_H^2 f, \varrho_t dr \wedge \alpha^v \rangle\\ 
&   = \langle F_H f, F_H^* (\varrho_t dr \wedge \alpha^v) \rangle \\
&  = - \langle d_H f, \iota_{\shh d\varrho_t} dr \wedge \alpha^v \rangle - \langle d_H f, \varrho_t (\Delta_{g_H^*} r) \alpha^v \rangle + \langle d_H f, \varrho_t \nabla_{\shh \alpha} dr \rangle
\end{align*}
since $\nabla \alpha^v = 0$. We use the identities $\Delta_{H} r = 2n$ and $\nabla_A dr = 2\flat \pr_{H} A $ to obtain
\begin{align*}
& \mathbb{E}\left[ \left\langle F_H^2 f, dr \wedge \alpha^v \right\rangle_{g^*} (X_t) \right] 
= - \langle d_H f, \iota_{\shh d\varrho_t} dr \wedge \alpha^v \rangle - 2 (n-1) \langle d_H f, \varrho_t \alpha^v \rangle \\
&= - \mathbb{E}\left[ \left\langle d_H f, \iota_{\shh d \log \varrho_t} dr \wedge \alpha^v \right\rangle_{g^*} (X_t) \right] - 2 (n-1) \mathbb{E}\left[ \ptr_t^{-1} d_H f(X_t)(v) \right]. \end{align*}
Hence, if we define $\mathscr{N}_t\colon T_{\mathbf{1}}^* G \to T_{\mathbf{1}}^* G$ by
$$\mathscr{N}_t\beta =n \beta  + \frac{1}{2} \ptr_t^{-1} \iota_{\sharp dr(X_t)} (d \log \varrho_t(X_t) \wedge \ptr_t \beta),$$
then $dP_t f(v) = \mathbb{E}[\mathscr{N}_t \ptr_t^{-1} df(v)]$ for any $v \in H$.

Observe that $|\mathscr{N}_t \beta_t|_{g_H^*} \leq \vartheta_t | \beta |_{g_H^*}$. Using H\"older's inequality, 
this leads us to the conclusion
\begin{align*}
 | dP_t f|_{g_H^*}(\mathbf{1}) &= \sup_{v \in \mathfrak{h}, |v|_{g_H} =1} dP_tf(v)\\ 
&=  \sup_{v \in \mathfrak{h}, |v|_{g_H} =1} \mathbb{E}[ \mathscr{N}_t \ptr_t^{-1} d f(X_t) ( v )]  \\
&  \leq \mathbb{E}[\vartheta_t^q \circ X_t]^{1/q} \mathbb{E}[  |df|_{g_H^*}^p \circ X_t]^{1/p}\\  
&\leq  C_{t,p} (P_t |df|_{g_H^*}^p (\mathbf{1}) )^{1/p} .
\end{align*}
\item
Using $dP_t(v) = \mathbb{E}[\mathscr{N}_t \ptr_t^{-1} ]$, for $p \in (2, \infty]$, $q \in [2, \infty)$ satisfying
$$\frac{1}{q} + \frac{1}{p} + \frac{1}{2} = 1,$$
we have
\begin{align*}
& | dP_1 f|_{g_H^*}(1) \leq n \mathbb{E}[ | df|_{g_H} (X_1)] + \mathbb{E}[(  |\pi | | \log \varrho |_{g_H^*} |df|_{g_H^*} )(X_1)]\\
& \leq n P_1 | df|_{g_H}  + \mathbb{E}\left[  |\pi |^q(X_1) \right]^{1/q}  \mathbb{E}\left[ | \log \varrho |_{g_H^*}^2(X_1) \right]^{1/2} \mathbb{E}\left[ |df|_{g_H^*}^p(X_1) \right]^{1/p} .
\end{align*}
As was observed in \cite[page 9]{BaBo15},
\begin{align*}
& \mathbb{E}\left[ | \log \varrho |_{g_H^*}^2(X_1) \right] = \int_M \rho | \log \varrho |_{g_H}^2 d\mu = \int_M \left( \Delta_{H} (\varrho \log \varrho) - (\log \varrho + 1) \Delta_{H} \varrho \right) d\mu \\
& = \int_M  (\log \varrho + 1) (D +Q) \varrho \, d\mu = \int_M  D (\varrho \log \varrho ) d\mu +  Q\int_M  (\log \varrho + 1)  \varrho \, d\mu \\
& = \int_M  (D+Q) (\varrho \log \varrho ) d\mu +  Q\int_M \varrho \, d\mu=Q
\end{align*} 
while
\begin{align*}
&\mathbb{E}[|\pi|^q(X_1)] = \mathbb{E}[|W_1|^q]  = \frac{2^{(q+n+1)/2} \pi^{(n-1)/2} }{ \sqrt{n} } \frac{\Gamma( \frac{n+ q}{2})}{\Gamma(\frac{n}{2})}
\end{align*}
The result follows.\qedhere
\end{enumerate}
\end{proof}

\begin{remark} \label{re:YMCarnot}
Consider a Carnot group $(G, H, g_H)$ and let $V$ be the complement of~$V$ defined by left translation of $\mathfrak{g}_2 \oplus \cdots \oplus\mathfrak{g}_k$. Since this is an ideal, we obtain the same subbundle using right translation. Extend the $g_H$ to a Riemannian metric $g$ by defining a right invariant metric on $V$. Then condition \eqref{IIzero} holds, but if $\nabla$ is defined as in \eqref{MaxRicCurv}, then $\Ric(\nabla)$ does not have a lower bound for $k \geq 3$. However, the Yang-Mills condition $\tr_H (\nabla_\times \calR)(\times, \blank) = 0$ of Remark~\ref{re:geometric} equals exactly the condition $\psi|{\mathfrak{h}} = 0$.
\end{remark}

\appendix
\section{Feynman-Kac formula for perturbations of self-adjoint operators} \label{sec:FK}
\subsection{Essentially self-adjoint operator on forms} \label{sec:HsrL}
Let $M$ be a manifold with a sub-Riemannian structure $(H, g_H)$ with $H$ bracket-generating. Consider the rough sub-Laplacian $L = L(\nabla)$ relative to some affine connection $\nabla$ on $TM$. 
Let $g$ be a complete sub-Riemannian metric taming~$g_H$ such that~$\nabla g = 0$. Assume that
$$L^* = L = - (\nabla_{\pr_{H} })^* (\nabla_{\pr_{H} }) .$$
We can then make the following statement for operators of the type $L - \mathscr{C}$ where $\mathscr{C} \in \Gamma(\End(T^*M))$. To simplify notation, we denote $\langle \blank, \blank \rangle_{L^2(\wedge^j g^*)}$ as simply $\langle \blank, \blank \rangle$ for the rest of this section.

\begin{lemma} \label{lemma:SA}
Assume that $\mathscr{C}^* = \mathscr{C}$. If $\mathcal{A} =L- \mathscr{C}$ is bounded from above on compactly supported forms, i.e.~if
$$\lambda_0 = \lambda_0(\mathcal{A}) = \sup\left\{ \frac{\langle \mathcal{A} \alpha, \alpha \rangle}{\langle \alpha, \alpha \rangle} \, \colon \, \alpha \in \Gamma_c(T^*M) \right\} < \infty,$$
then $\mathcal{A}$ is essentially self-adjoint on compactly supported one-forms.
\end{lemma}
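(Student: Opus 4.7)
The plan is to invoke the standard criterion: a symmetric operator on $\Gamma_c(T^*M)$ that is bounded above by $\lambda_0$ is essentially self-adjoint if and only if $\ker(\mathcal{A}^*-\lambda)=\{0\}$ for some real $\lambda>\lambda_0$. I fix $\lambda>\lambda_0$ and assume $\alpha\in L^2(T^*M)$ with $\mathcal{A}^*\alpha=\lambda\alpha$, aiming to show $\alpha\equiv 0$.

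The first step is smoothness. Since $H$ is bracket-generating, the scalar principal part of $\mathcal{A}$ equals the sub-Laplacian symbol, and H\"ormander's theorem in its form for operators on sections of vector bundles shows that $\mathcal{A}-\lambda$ is hypoelliptic. The hypotheses $L^*=L$ and $\mathscr{C}^*=\mathscr{C}$ make the formal adjoint of $\mathcal{A}$ on $\Gamma_c$ coincide with $\mathcal{A}$, so $\mathcal{A}^*\alpha=\lambda\alpha$ says $\alpha$ is a distributional solution of $(\mathcal{A}-\lambda)\alpha=0$; hypoellipticity upgrades this to a classical smooth solution.

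Completeness of $g$, and hence of $\mathsf{d}_{g_H}$, produces a sequence $\chi_n\in C^\infty_c(M)$ with $0\le\chi_n\le 1$, $\chi_n\uparrow 1$, and $\bigl\||\nabla^H\chi_n|_{g_H}\bigr\|_{L^\infty}\to 0$; one obtains these by smoothing $\phi(\mathsf{d}_{g_H}(x_0,\blank)/n)$ for a fixed $\phi\in C^\infty_c(\mathbb{R})$ equal to $1$ near $0$, using that by~\eqref{distance} the distance function is horizontally $1$-Lipschitz. The central computation now relies on $L=-(\nabla_{\pr_H})^*\nabla_{\pr_H}$ together with $\nabla g=0$: in a local horizontal orthonormal frame $A_1,\dots,A_n$, a pointwise Leibniz expansion of $|\nabla_{\pr_H}(\chi_n\alpha)|^2$ integrated over the compact support of $\chi_n^2\alpha$, together with integration by parts in $\langle L\alpha,\chi_n^2\alpha\rangle=-\langle\nabla_{\pr_H}\alpha,\nabla_{\pr_H}(\chi_n^2\alpha)\rangle$, produces the Green-type identity
\begin{equation*}
\langle L(\chi_n\alpha),\chi_n\alpha\rangle = \langle L\alpha,\chi_n^2\alpha\rangle - \bigl\||\nabla^H\chi_n|_{g_H}\,|\alpha|_{g^*}\bigr\|_{L^2}^{2}.
\end{equation*}
Because $\mathscr{C}$ acts pointwise and $\mathcal{A}\alpha=\lambda\alpha$ classically, subtracting $\langle \mathscr{C}(\chi_n\alpha),\chi_n\alpha\rangle=\langle\chi_n^2\mathscr{C}\alpha,\alpha\rangle$ upgrades this to $\langle\mathcal{A}(\chi_n\alpha),\chi_n\alpha\rangle=\lambda\|\chi_n\alpha\|^2-\bigl\||\nabla^H\chi_n|_{g_H}\,|\alpha|_{g^*}\bigr\|_{L^2}^2$. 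Since $\chi_n\alpha\in\Gamma_c$, the definition of $\lambda_0$ bounds the left-hand side by $\lambda_0\|\chi_n\alpha\|^2$ and yields
\begin{equation*}
(\lambda-\lambda_0)\,\|\chi_n\alpha\|_{L^2}^{2} \;\le\; \bigl\||\nabla^H\chi_n|_{g_H}\bigr\|_{L^\infty}^{2}\,\|\alpha\|_{L^2}^{2}.
\end{equation*}
The right-hand side tends to $0$, while monotone convergence supplies $\|\chi_n\alpha\|_{L^2}\to\|\alpha\|_{L^2}$; since $\lambda>\lambda_0$, this forces $\alpha\equiv 0$.

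The main obstacle is formulating the Green-type identity rigorously when $\alpha$ is a priori only in $L^2$. Hypoellipticity circumvents this by upgrading $\alpha$ to a smooth section, after which the identity reduces to a Leibniz computation integrated over the compact support of $\chi_n^2\alpha$. A secondary technical point is producing smooth horizontal cut-offs with arbitrarily small horizontal gradient, which is handled by mollifying the $1$-Lipschitz function $\mathsf{d}_{g_H}(x_0,\blank)$.
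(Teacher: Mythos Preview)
Your proof is correct and follows essentially the same Strichartz-type cut-off argument as the paper: invoke the deficiency criterion for semibounded symmetric operators, use hypoellipticity to upgrade a putative $L^2$-eigenform to a smooth one, and exploit $L=-(\nabla_{\pr_H})^*\nabla_{\pr_H}$ together with compactly supported cut-offs coming from completeness to derive a vanishing inequality. The only organizational difference is that you bound $(\lambda-\lambda_0)\|\chi_n\alpha\|^2$ directly and conclude $\alpha=0$, whereas the paper first deduces $\|f\nabla_{\pr_H}\alpha\|\to 0$, hence $\nabla_{\pr_H}\alpha=0$, and then argues a contradiction.
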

We follow the argument of \cite[Section~2]{Str83}. We begin by introducing the following lemma.

\begin{lemma} \cite[Section~X.1]{ReSi75} 
Let $\mathcal{A}$ be any closed, symmetric, densely defined operator on a Hilbert space with domain $\Dom(\mathcal{A})$. Assume that $\mathcal{A}$ is bounded from above by $\lambda_0(\mathcal{A})$ on its domain. Then $\mathcal{A} = \mathcal{A}^*$ if and only if there are no eigenvectors in the domain of $\mathcal{A}^*$ with eigenvalue $\lambda > \lambda_0(\mathcal{A}).$
\end{lemma}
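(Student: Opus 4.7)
The plan is to reduce the statement to the classical resolvent criterion—a closed symmetric operator is self-adjoint as soon as its resolvent set meets the real axis—and to use the upper bound $\lambda_0$ to manufacture a real point in $\rho(\mathcal{A})$ out of the eigenvalue hypothesis on $\mathcal{A}^*$.

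The forward implication is immediate. If $\mathcal{A}=\mathcal{A}^*$ and $\mathcal{A}^*\alpha=\lambda\alpha$, then $\alpha\in \Dom(\mathcal{A}^*)=\Dom(\mathcal{A})$, so the defining inequality $\langle \mathcal{A}\alpha,\alpha\rangle\le\lambda_0\|\alpha\|^2$ applies and yields $\lambda\le\lambda_0$ unless $\alpha=0$.

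For the converse, suppose $\mathcal{A}^*$ has no eigenvalues strictly larger than $\lambda_0$, and fix any real $\lambda>\lambda_0$. Symmetry makes $\langle \mathcal{A}\alpha,\alpha\rangle$ real, so by the definition of $\lambda_0$ together with Cauchy--Schwarz,
$$\|(\mathcal{A}-\lambda)\alpha\|\,\|\alpha\|\ \ge\ |\langle(\mathcal{A}-\lambda)\alpha,\alpha\rangle|\ \ge\ (\lambda-\lambda_0)\|\alpha\|^2 \quad \text{for every } \alpha\in \Dom(\mathcal{A}).$$
Hence $\mathcal{A}-\lambda$ is bounded below by $\lambda-\lambda_0>0$ and, being closed, therefore has closed range and is injective. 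Since $\lambda\in\mathbb{R}$, we have $(\mathcal{A}-\lambda)^*=\mathcal{A}^*-\lambda$, so the standing hypothesis reads $\ker(\mathcal{A}^*-\lambda)=\{0\}$; the orthogonality identity $\mathrm{ran}(\mathcal{A}-\lambda)^\perp=\ker(\mathcal{A}^*-\lambda)$ then promotes the closed range to all of~$H$, so $\mathcal{A}-\lambda\colon \Dom(\mathcal{A})\to H$ is a bijection and $\lambda\in\rho(\mathcal{A})$.

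To finish, I would upgrade ``real point in the resolvent'' to ``self-adjoint'' by the usual argument: given any $\alpha\in \Dom(\mathcal{A}^*)$, use surjectivity of $\mathcal{A}-\lambda$ to pick $\alpha'\in \Dom(\mathcal{A})\subseteq \Dom(\mathcal{A}^*)$ with $(\mathcal{A}-\lambda)\alpha'=(\mathcal{A}^*-\lambda)\alpha$; then $\alpha-\alpha'$ lies in $\ker(\mathcal{A}^*-\lambda)=\{0\}$, so $\alpha=\alpha'\in \Dom(\mathcal{A})$, giving $\Dom(\mathcal{A}^*)\subseteq \Dom(\mathcal{A})$ and hence $\mathcal{A}=\mathcal{A}^*$. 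The only substantive step is the semiboundedness-driven coercivity estimate on $\mathcal{A}-\lambda$, which simultaneously supplies injectivity and closed range for real $\lambda>\lambda_0$; everything else is bookkeeping with the standard range/kernel duality for closed operators, and no genuine obstacle is expected.
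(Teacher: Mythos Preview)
Your argument is correct and is precisely the standard proof; the paper itself gives no proof of this lemma but simply cites \cite[Section~X.1]{ReSi75}, whose ``basic criterion for self-adjointness'' is exactly what you have reproduced.
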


\begin{proof}[Proof of Lemma~\ref{lemma:SA}]
Let $\pr_{H}$ be the orthogonal projection to $H$. Since $L = - (\nabla_{\pr_{H}})^*(\nabla_{\pr_{H}})$, we have $- \langle \mathscr{C} \alpha, \alpha \rangle  \leq  \lambda_0 \langle \alpha, \alpha \rangle$. Denote the closure of $\mathcal{A}| \Gamma_c(T^*M)$ by $\mathcal{A}$ as well.  Assume that there exist a one-form $\alpha$ in $L^2$ satisfying $\mathcal{A}^* \alpha = \lambda\alpha$ with $\lambda > \lambda_0$. By using a trivialization of the cotangent bundle, we see that $L$ is hypoelliptic, which implies that $\alpha$ is smooth. Let~$f$ be an arbitrary function of compact support and write $d_H f = \pr_{H}^* df$. Then
\begin{align*}  \lambda \langle f^2 \alpha, \alpha \rangle &= \langle f^2 \alpha, \mathcal{A}^* \alpha \rangle = \langle \mathcal{A} (f^2 \alpha), \alpha \rangle \\
& = - \langle f^2 \nabla_{\pr_{H} \blankdown} \alpha, \nabla_{\pr_{H} \blankdown} \alpha \rangle - \langle f^2 \mathscr{C} \alpha,  \alpha \rangle - 2\langle f d_H f \otimes \alpha, \nabla_{\pr_{H} \blankdown} \alpha \rangle \\
& \leq - \| f \nabla_{\pr_{H} \blankdown} \alpha \|_{L^2(g^*)}^2 + \lambda_0 \langle f^2 \alpha,  \alpha \rangle  -2 \langle d_H f \otimes \alpha, f \nabla_{\pr_{H} \blankdown} \alpha \rangle.
\end{align*}
Since $(\lambda - \lambda_0) \langle f^2 \alpha,\alpha \rangle  \geq 0$, we have
$$ \left\| f\nabla_{\pr_{H} \blankdown} \alpha \right\|^2_{L^2(g^*)}  \leq  - 2\langle d_H f \otimes \alpha, f\nabla_{\pr_{H} \blankdown } \alpha \rangle,$$
and hence
\begin{equation} \label{ineqSA} \left\| f \nabla_{\pr_{H} \blankdown} \alpha \right\|_{L^2(g^*)}^2  \leq 2 \| d_H f \|_{L^\infty(g^*)} \| \alpha \|_{L^2(g^*)} \| f\nabla_{\pr_{H} \blankdown} \alpha \|_{L^2(g^*)}.
\end{equation}
Since we assumed that $g$ was complete, there exist a sequence of smooth functions $f_j \uparrow 1$ of compact support satisfying $\| df_j\|_{L^\infty(g^*)} \to 0$. By inserting $f_j$ in \eqref{ineqSA} and taking the limit we obtain $\|  \nabla_{\pr_{H} \blankdown} \alpha \|_{L^2(g^*)}^2 = - \langle L \alpha, \alpha\rangle = 0$. However, this contradicts our initial hypothesis $\mathcal{A}^* \alpha = \lambda \alpha$ for $\lambda > \lambda_0$. Hence, we obtain our result. 
\end{proof}

\begin{remark}
By replacing the sequence $f_j$ in the proof of Lemma~\ref{lemma:SA} with (an appropriately smooth approximation of) the sequence found in \cite[Theorem~7.3]{Str86}, we can deduce essential self-adjointness of $L - \mathscr{C}$ just by assuming completeness of~${\sf d}_{g_H}$.
\end{remark}

\subsection{Stochastic representation of a semigroup}
Let $(M, H, g_H)$ be a sub-Rie\-mann\-ian manifold and let $g$ be a complete Riemannian metric taming $g_H$. Define $L^2(T^*M)$ as the space of all one-forms in $L^2$ relative to $g$. Let $\nabla$ be a connection satisfying $\nabla g =0$ and $L^* = L$. Relative to $L(\nabla)$, consider the stochastic flow $X_t( \blank)$ with explosion time $\tau(\blank)$. Define $\ptr_t(x)$ as parallel transport along~$X_t(x)$ with respect to~$\nabla$.

Let $\mathscr{C}$ be a zero order operator on $M$, with
$$\mathscr{C}^s = \frac{1}{2}(\mathscr{C} + \mathscr{C}^*), \quad \mathscr{C}^a = \frac{1}{2}(\mathscr{C} - \mathscr{C}^*).$$
\begin{lemma} \label{lemma:StochRep}
Assume that $L - \mathscr{C}^s$ is bounded from above and assume that $\mathscr{C}^a$ is bounded.
For each $x$, let $Q_t(x) \in \End T_x^*M$ a continuous process adapted to the filtration of $X_t(x)$ such that for any $\alpha \in \Gamma_c(T_x^*M)$, we have
$$d \Big( Q_t(x) \ptr_t^{-1} \alpha(X_t(x)) \Big) \stackrel{\text{\emph{loc.\,m.}}}{=} Q_t(x) \ptr_t^{-1} (L - \mathscr{C})\alpha(X_t(x)) dt,$$ 
where $\stackrel{\emph{\text{loc.\,m.}}}{=}$ denotes equality modulo differentials of local martingales.

 Then there exists a strongly continuous semigroup $P_t^{(1)}$ on $L^2(T^*M)$ such that for any $\alpha \in L^2(T^*M)$,
$$P_t^{(1)}\alpha(x) = \mathbb{E}\left[ 1_{t < \tau(x)} Q_t(x) \ptr_t^{-1} \alpha(X_t)(x) \right],$$
and such that $\lim_{t \downarrow 0} \frac{d}{dt} P_t^{(1)} \alpha = (L - \mathscr{C}) \alpha$ for any $\alpha \in \Gamma_c(TM)$.
\end{lemma}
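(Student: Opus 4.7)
The plan is to decompose $L - \mathscr{C} = (L - \mathscr{C}^s) - \mathscr{C}^a$, build $P^{(1)}_t$ as a bounded skew-adjoint perturbation of the self-adjoint semigroup generated by the symmetric piece, and then verify the stochastic formula via It\^o's formula combined with a localization argument. First I would apply Lemma~\ref{lemma:SA} to $L - \mathscr{C}^s$, which is symmetric on $\Gamma_c(T^*M)$ and bounded above by hypothesis, to obtain a unique self-adjoint extension $A_0$ with $A_0 \le \lambda_0$; by the spectral theorem $A_0$ generates a strongly continuous semigroup on $L^2(T^*M)$. Adding the bounded skew-adjoint perturbation $-\mathscr{C}^a$ via the bounded perturbation theorem (\cite[Chapter~III, Theorem~1.3]{EnNa00}) produces a strongly continuous semigroup $P^{(1)}_t$ on $L^2(T^*M)$ with generator $A := A_0 - \mathscr{C}^a$ on $\Dom(A_0)$. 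Since $A$ coincides with $L - \mathscr{C}$ on $\Gamma_c(T^*M) \subseteq \Dom(A_0)$, the claimed generator identity follows from strong continuity.

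To verify the stochastic representation, fix $T > 0$ and $\alpha \in \Gamma_c(T^*M)$, set $u_s := P^{(1)}_{T-s}\alpha$, and note that hypoellipticity of $\partial_s - (L - \mathscr{C})$ (inherited from the strong H\"ormander condition on $\Delta_H$, since $\mathscr{C}$ is zero order) gives joint smoothness of $u_s(x)$ on $(0, T] \times M$. It\^o's formula applied to $N_s := Q_s(x) \ptr_s^{-1} u_s(X_s(x))$ together with the defining local-martingale identity for $Q_s \ptr_s^{-1}$ yields
$$dN_s \stackrel{\text{loc.\,m.}}{=} Q_s \ptr_s^{-1}\bigl(\partial_s u_s + (L - \mathscr{C})u_s\bigr)(X_s)\,ds = 0,$$
so that $N_s$ is a local martingale on $[0, T \wedge \tau(x))$. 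Localizing by the exit times $\tau_n \uparrow \tau$ from a relatively compact exhaustion $\{U_n\}$ of $M$, the process $N_{\cdot \wedge \tau_n}$ becomes a true martingale, since $Q_s\ptr_s^{-1}$ is bounded on $[0, T \wedge \tau_n]$ by continuous adaptedness of $Q_s$ together with local boundedness of $\mathscr{C}$. Taking expectations at $s = T\wedge \tau_n$ and rearranging gives
$$P^{(1)}_T \alpha(x) = \mathbb{E}[1_{T < \tau_n}\, Q_T \ptr_T^{-1}\alpha(X_T)] + \mathbb{E}[1_{\tau_n \le T}\, Q_{\tau_n}\ptr_{\tau_n}^{-1}u_{\tau_n}(X_{\tau_n})],$$
and density combined with the $L^2$-norm bound on $P^{(1)}_t$ then extends the identity from $\Gamma_c(T^*M)$ to $L^2(T^*M)$.

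The main obstacle is the vanishing of the boundary term as $n \to \infty$: a priori $u_s$ is only an $L^2$-section and need not be bounded globally, while the pathwise norm bound on $Q_s \ptr_s^{-1}$ may degenerate as $s \to \tau$. The cleanest workaround I would pursue is a duality argument: pair $P^{(1)}_T\alpha$ against an arbitrary $\beta \in \Gamma_c(T^*M)$, interchange expectation and pairing via Fubini, and use self-adjointness of $A_0$ together with boundedness of $\mathscr{C}^a$ to identify the resulting $L^2$-pairing with $\langle \alpha, (P^{(1)}_T)^*\beta\rangle$; this reduces the verification to the $L^2$-semigroup theory produced in the first step and bypasses any pointwise control on $u_{\tau_n}$. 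An alternative, following the strategy of \cite{DrTh01}, exploits completeness of $g$ and the upper bound on $L - \mathscr{C}^s$ to propagate an integrated estimate on $|u_s(X_s)|$ up to $s = \tau$, which forces the boundary term to vanish directly.
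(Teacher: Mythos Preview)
Your construction of the semigroup $P^{(1)}_t$ via a bounded perturbation of the self-adjoint part matches the paper's approach exactly (the paper phrases the same bounded-perturbation step in terms of Volterra operators $\mathsf{T}(P^s;\mathscr{C}^a)$, which is precisely the Dyson series underlying \cite[III, Theorem~1.3]{EnNa00}).

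The stochastic representation, however, has a real gap, and you have correctly located it: the boundary term $\mathbb{E}[1_{\tau_n\le T}\,Q_{\tau_n}\ptr_{\tau_n}^{-1}u_{\tau_n}(X_{\tau_n})]$ does not vanish under your setup. The section $u_s=P^{(1)}_{T-s}\alpha$ is merely $L^2$ in space, and has no reason to be small at $X_{\tau_n}\in\partial U_n$. Your duality sketch does not resolve this: after pairing with $\beta\in\Gamma_c(T^*M)$ and applying Fubini you obtain an integral over $x$ of the same pointwise quantities, and there is no apparent way to identify it with $\langle\alpha,(P^{(1)}_T)^*\beta\rangle$ without already knowing the representation you are trying to prove. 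The alternative appeal to integrated estimates in the style of \cite{DrTh01} is likewise not substantiated; note also that the hypothesis is an $L^2$ upper bound on $L-\mathscr{C}^s$, not a pointwise lower bound on $\mathscr{C}^s$, so you do not even have a uniform Gronwall bound on $|Q_s|$.

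The paper's device for eliminating the boundary term is to approximate at the level of the \emph{semigroup}, not just at the level of the stopping times. One takes the Friedrichs extension $\Lambda^n$ of $L-\mathscr{C}^s$ on each relatively compact $D^n$, forms the corresponding Dirichlet semigroup $\tilde P^n_t$, and then perturbs by $\mathscr{C}^a$ to obtain $P^n_t$. For these domain semigroups the process $N^n_t=Q_t\ptr_t^{-1}(P^n_{S-t}\alpha)(X_t)$, run up to the exit time $\tau^n$, is a \emph{bounded} local martingale (everything lives on the compact $\overline{D^n}$ and $P^n_{S-\cdot}\alpha$ vanishes at the boundary), hence a true martingale, yielding $P^n_t\alpha(x)=\mathbb{E}[1_{t<\tau^n}\,Q_t\ptr_t^{-1}\alpha(X_t)]$ with no residual term. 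One then lets $n\to\infty$: the right side increases to the desired expectation, while the left side converges to $P^{(1)}_t\alpha$ in $L^2$ by Kato's convergence theorems for semigroups. This two-level approximation---of both the process and the generator---is the missing ingredient in your argument.
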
 

For the proof, we need to consider a special class of Volterra operators. To this end, we follow the arguments of \cite[Section III.1]{EnNa00}. Let $\mathfrak{B}$ be a Banach space and let $\mathscr{L}(\mathfrak{B})$ be the space of all bounded operators on $\mathfrak{B}$ with the strong operator topology. Consider any strongly continuous semigroup $\mathbb{R}_{\geq 0} \to \mathscr{L}(\mathfrak{B})$, $t \mapsto S_t$ and let $\mathscr{A}\colon \mathfrak{B} \to \mathfrak{B}$ be a bounded operator. We define the corresponding Volterra operator $\mathsf{V}(S; \mathscr{A})$ on continuous functions $\mathbb{R}_{\geq 0} \to \mathscr{L}( \mathfrak{B})$, $(t, \alpha) \mapsto F_t \alpha$ by
$$(\mathsf{V}(S;\mathscr{A})F)_t \alpha = \int_0^t S_{t-r} \mathscr{A} F_r\alpha \, dr,$$
and introduce the operator $\mathsf{T}(S; \mathscr{A})$ by
$$\mathsf{T}(S; \mathscr{A})F =  \sum_{n=0}^\infty \mathsf{V}(S; \mathscr{A})^n F.$$
The operator $\mathsf{T}(S; \mathscr{A})$ is well defined, and if $S_t$ has generator $(L, \Dom(L))$ then $\tilde S_t := (\mathsf{T}(S; \mathscr{A}) S)_t$ defines a strongly continuous semigroup with generator $(L + \mathscr{A}, \Dom(L))$.

\begin{proof}
By Lemma~\ref{lemma:SA} the operator $L - \mathscr{C}^s$ is essentially self-adjoint. Let $P^s_t$ be the corresponding semigroup on $L^2(T^*M)$ with domain $\Dom^s = \Dom(L- \mathscr{C}^s)$.

Let $D^n$ be an exhausting sequence of $M$ of relative compact domains, see e.g. \cite[Appendix~B.1]{DrTh01} for construction. Consider the Friedrichs extension $(\Lambda^n, \Dom(\Lambda^n))$ of $L- \mathscr{C}^s$ restricted to compactly supported forms on $D^n$ and let $\tilde P^{n}_t$ be the corresponding semigroup defined by the spectral theorem. Since the operators $\Lambda^n$ are bounded from above by assumption, the semigroups $\tilde P^{n}$ are strongly continuous by \cite[Chapter II.3\,c]{EnNa00}. Define $P_t^s$ similarly with respect to the unique self-adjoint extension of $L-\mathscr{C}^s$ restricted to compactly supported forms. Let $(\Lambda, \Dom(\Lambda))$ denote the generator of $P_t^s$ and note that for any compactly supported forms $\alpha$, we have that $\tilde P_t^n \alpha $ converge to $P_t^s \alpha$ in $L^2(T^*M)$, by e.g.~\cite[Chapter VIII.3.3]{Kat95}. Define $P^{n}_t = (\mathsf{T}(\tilde P^n; \mathscr{A})\tilde P^n)_t$ and finally $P_t^{(1)} = (\mathsf{T}( P^s; \mathscr{C}^a) P^s)_t$. These semigroups are strongly continuous with respective generators $(\Lambda^n + \mathscr{C}^a, \Dom(\Lambda^n ))$ and $(\Lambda + \mathscr{C}^a, \Dom(\Lambda))$. Furthermore, $P_t^n\alpha $ converge to $P_t^{(1)}\alpha$ in $L^2(TM)$ by \cite[Theorem IV.2.23 (c)]{Kat95}.

For $x \in M$, let $\tau_n(x)$ denote the first exist time for $X_t(x)$ of the domain~$D^n$. For any form $\alpha$ with support in $D^k$, we have that for $S > 0$ and $n \geq k$,
$$N^n_t = Q_t(x) \ptr_t^{-1} (P^{n}_{S-t} \alpha) |_{X_t(x)}$$
is a bounded local martingale, giving us
$$P^{n}_t\alpha(x) = \mathbb{E}\left[1_{t < \tau^n(x)} Q_t(x) \ptr_t^{-1} \alpha(X_t(x)) \right].$$
Taking the limit, and using that $P^{n}_t$ converges to $P_t^{(1)}$, we obtain
\begin{equation*}
P_t^{(1)} \alpha(x) = \mathbb{E}\left[1_{t<\tau(x)} Q_t(x) \ptr_t^{-1} \alpha(X_t(x)) \right].\qedhere
\end{equation*}
\end{proof}

\bibliographystyle{habbrv}
\bibliography{Bibliography}

\end{document}